\theoremstyle{plain}
\newtheorem{thm}{Theorem}[section]
\newtheorem{prop}[thm]{Proposition}
\newtheorem{lem}[thm]{Lemma}
\newtheorem{cor}[thm]{Corollary}
\theoremstyle{remark}
\newtheorem{rem}[thm]{Remark}
\newtheorem*{acks}{Acknowledgements}
\theoremstyle{definition}
\newtheorem{defn}[thm]{Definition}
\newtheorem{eg}[thm]{Example}
\theoremstyle{conjecture}
\newtheorem{conj}[thm]{Conjecture}
\def\H{\mathsf{H}}
\def\Q{\mathbb{Q}}
\def\C{\mathbb{C}}
\def\Z{\mathbb{Z}}
\def\A{\mathcal{A}}
\def\B{\mathcal{B}}
\def\X{\mathcal{X}}
\def\U{\mathcal{U}}
\def\P{\mathcal{P}}
\def\O{\mathcal{O}}
\def\c{\mathcal{C}}
\def\b\X{\mathsf{D}^{\mathsf{b}}(\mathsf{X})}
\def\SOD{\mathsf{SOD}}
\def\1\b\Y{\mathsf{D}^{\mathsf{b}}(\mathsf{Y})}
\def\2\b\Z{\mathsf{D}^{\mathsf{b}}(\mathsf{Z})}
\def\CH{\mathsf{CH}}
\def\K{\mathsf{K}}
\def\HH{\mathsf{HH}}
\def\HN{\mathsf{HN}}
\def\Ch{\mathsf{Ch}}
\def\dgcat{\mathsf{dg-cat}}
\def\Mor{\mathsf{Mor}}
\def\Hom{\mathsf{Hom}}
\def\X{\mathsf{X}}
\def\E{\mathsf{E}}
\def\F{\mathsf{F}}
\def\G{\mathsf{G}}
\def\Td{\mathsf{Td}}
\def\v{\mathsf{v}}
\def\d{\mathcal{D}}
\def\x1{\mathsf{x}'}
\def\3\dg{\mathsf{dg}}
\def\Hmo{\mathsf{Hmo}}
\def\rep{\mathsf{rep}}
\def\PNChow{\mathsf{PChow}}
\def\f{\mathsf{f}}
\def\kC{\mathsf{C}(\mathsf{k})}
\def\4\dg{\mathsf{C}_{\mathsf{dg}}(\mathsf{k})}
\def\k{\mathsf{k}}
\def\j{\mathsf{j}}
\def\i{\mathsf{i}}
\def\Ho{\mathsf{Ho}}
\def\a{\mathsf{A}}
\def\Perf{\mathsf{Perf}}
\def\Per{\mathsf{Per}}
\def\mod{\mathsf{mod}}
\def\y{\mathsf{y}}
\def\Y{\mathsf{Y}}
\def\Chow{\mathsf{Chow}}
\def\NChow{\mathsf{NChow}}
\def\SmProj{\mathsf{SmProjec}}
\def\z{\mathsf{Z}}
\def\L{\mathsf{L}}
\def\R{\mathsf{R}}
\def\HC{\mathsf{HC}}
\def\per{\mathsf{per}}
\def\p{\mathsf{p}}
\def\Proj{\mathsf{Proj}}
\def\W{\mathsf{W}}
\def\n{\mathsf{n}}
\def\Pf{\mathsf{Pf}}
\def\V{\mathsf{V}}
\def\m{\mathsf{m}}
\def\r{\mathsf{r}}
\def\S{\mathsf{S}}
\def\q{\mathsf{q}}
\def\HPD{\mathsf{HPD}}
\def\Hodge{\mathsf{Hodge}}
\def\top{\mathsf{top}}
\newcommand \Caldararu {\text{C\u{a}ld\u{a}raru}}
\newcommand \Poincare {\text{Poincar\'{e}}}
\newcommand \Sym {\mathop\mathsf{Sym}}
\renewcommand\Im{\mathop\mathsf{Im}}
\numberwithin{equation}{section}
\begin{document}
\title{Noncommutative Hodge conjecture}

\author{Xun Lin}
\email{lin-x18@mails.tsinghua.edu.cn}
\address{Yau mathematical science center, Tsinghua university, Beijing China.}

\begin{abstract}
  The paper provides a version of the rational Hodge conjecture for $\3\dg$ categories. The noncommutative Hodge conjecture is equivalent to the version proposed in \cite{perry2020integral} for admissible subcategories. We obtain examples of evidence of the Hodge conjecture by techniques of noncommutative geometry. Finally, we show that the noncommutative Hodge conjecture for smooth proper connective $\3\dg$ algebras is true.
\end{abstract}

\maketitle

\tableofcontents

\section{Introduction}

Recently, G.\ Tabuada proposed a series of noncommutative counterparts of the celebrated conjectures, for example, Grothendieck standard conjecture of type $\mathsf{C}$ and type $\mathsf{D}$, Voevodsky nilpotence conjecture, Tate conjecture, Weil conjecture, and so on. After proposing the noncommutative counterparts, he proved additivity with respect to the
$\SOD$s (semi-orthogonal decomposition, see the notation Section \ref{notation}) for most of these conjectures. Then, he was able to give new evidence of the conjectures by a good knowledge of the semi-orthogonal decompositions of derived category of varieties. For the details, the reader can refer to ``Noncommutative counterparts of celebrated conjectures'' \cite{tabuada2019noncommutative}.

\par

In this paper, the author provides a version of the rational Hodge conjecture to the small $\3\dg$ categories. This new conjecture is equivalent to the classical Hodge conjecture when the $\3\dg$ category is $\Per_{\3\dg}(\X)$, where $\X$ is a projective smooth variety. It is equivalent to the version of Hodge conjecture in \cite{perry2020integral} for the admissible subcategories of $\b\X$.

\par

For $\operatorname{\Per}_{\3\dg}(\X)$, $\HH_{0}(\Per_{\3\dg}(\X))\cong \oplus \H^{\mathsf{p},\mathsf{p}}(\X,\C)$ by $\mathsf{HKR}$ isomorphism. In order to generalize the Hodge conjecture, we need to find natural intrinsic rational Hodge classes in $\HH_{0}(\A)$, and most importantly, it becomes the usual rational Hodge classes when $\A=\mathsf{Per}_{\3\dg}(\X)$. Classically, it is well known that the images of rational topological $\K$-groups under topological Chern character recovers the rational Betti cohomolgy. The topological $\K$-theory was generalized to the noncommutative spaces by A.\ Blanc\cite{blanc_2016}, it turns out that the image of rational topological $\K$-group $\K_{0}^{\mathsf{top}}(\A)_{\Q}$ under the topological Chern character becomes the even rational Betti cohomology when $\A=\mathsf{Per}_{\3\dg}(\X)$.

\par

There is a functorial commutative diagram$\colon$
$$\xymatrix{&&\HH_{0}(\A)\\
\K_{0}(\A)\ar[rru]^{\Ch}\ar[d]\ar[r]_{\Ch}&\HN_{0}(\A)\ar[d]^{j}\ar[ru]_{\pi}&\\
\K^{\top}_{0}(\A)\ar[r]^{\Ch^{\top}}&\HC^{\per}_{0}(\A)&}$$

\begin{defn}
Let $\A$ be a small $\3\dg$ category. The Hodge classes of $\A$ is defined as $$\Hodge(\A):=\pi(\j^{-1}(\Ch^{\top}(\K_{0}^{\top}(\A)_{\Q})))\subset \HH_{0}(\A).$$
\end{defn}
Clearly, the Chern character $\Ch: \K_{0}(\A)\rightarrow \HH_{0}(\A)$ maps $\K_{0}(\A)$ to $\Hodge(\A)$. We define the noncommutative Hodge conjecture for any $\3\dg$ categories as follow.

\begin{conj}
(Noncommutative Hodge conjecture)
The Chern character $\Ch: \K_{0}(\A)\mapsto \HH_{0}(\A)$ maps $\K_{0}(\A)_{\Q}$ surjectively into the Hodge classes $\Hodge(\A)$.
\end{conj}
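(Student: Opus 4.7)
The plan is to attack surjectivity of $\Ch\colon \K_{0}(\A)_{\Q} \to \Hodge(\A)$ by reducing general small $\3\dg$ categories to tractable building blocks via the functorial diagram, and then to confirm the conjecture in the classical geometric case and in the connective case promised by the abstract. The first reduction step is additivity under semi-orthogonal decompositions, in the spirit of \cite{tabuada2019noncommutative}: each of $\K_{0}$, $\HH_{0}$, $\HN_{0}$, $\K_{0}^{\top}$, $\HC_{0}^{\per}$ is additive for $\SOD$s, and all arrows in the diagram are natural, so if $\A = \langle \B, \c \rangle$ then the conjecture for $\A$ follows from the conjecture for $\B$ and $\c$, letting one concentrate on indecomposable pieces.

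Next I would verify the geometric comparison $\A = \Per_{\3\dg}(\X)$. Under the $\mathsf{HKR}$ isomorphism $\HH_{0}(\A) \cong \bigoplus_{\p} \H^{\p,\p}(\X,\C)$ and the analogous descriptions $\HN_{0}(\A) \cong \bigoplus_{\p} \F^{\p}\H^{2\p}(\X,\C)$ and $\HC_{0}^{\per}(\A) \cong \H^{\mathrm{even}}(\X,\C)$, the map $\pi$ is projection onto the Hodge $(\p,\p)$-piece and $\j$ is the inclusion of the Hodge filtration. Combined with Blanc's comparison identifying $\Ch^{\top}(\K_{0}^{\top}(\A)_{\Q})$ with $\H^{\mathrm{even}}(\X,\Q)$, this would show that $\Hodge(\Per_{\3\dg}(\X))$ equals the classical rational $(\p,\p)$-Hodge classes, so the noncommutative conjecture in this case is equivalent to the classical Hodge conjecture for $\X$.

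For the concrete new statement --- smooth proper connective $\3\dg$ algebras $\A$ --- I would exploit that connectivity equips $\Per_{\3\dg}(\A)$ with a bounded $t$-structure whose heart is the finite-dimensional $\k$-algebra $\H^{0}(\A)$. Then $\K_{0}(\A)_{\Q}$ is the rational Grothendieck group of a finite-dimensional algebra, and for such algebras the rational Dennis trace (and hence $\Ch$) surjects onto $\HH_{0}(\A)_{\Q}$. The remaining delicate point is to rule out extra rational classes coming from $\K_{0}^{\top}$: a weight/degeneration argument for the noncommutative Hodge-to-de Rham spectral sequence, together with connectivity, should pin $\Ch^{\top}(\K_{0}^{\top}(\A)_{\Q})$ inside $\j(\Ch(\K_{0}(\A)_{\Q}))\subset \HC_{0}^{\per}(\A)$, forcing every Hodge class to descend from $\K_{0}(\A)_{\Q}$.

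The principal obstacle in full generality is that the conjecture subsumes the classical Hodge conjecture via the geometric reduction above, so an unconditional proof is out of reach; realistic targets are the commutative equivalence and ad hoc structural arguments as in the connective setting. In the connective case specifically, the hardest technical step is controlling the image of Blanc's topological Chern character rationally --- i.e.\ that topological $\K$-theory introduces no exotic Hodge classes beyond the algebraic ones --- which should rest on the explicit computation of $\HC^{\per}$ for connective smooth proper $\3\dg$ algebras and on the collapse of weight filtrations forced by connectivity.
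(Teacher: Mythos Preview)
The statement is a \emph{conjecture}, not a theorem, and the paper does not prove it in general; indeed you yourself correctly observe that via the identification with the classical Hodge conjecture for $\A=\Per_{\3\dg}(\X)$ an unconditional proof is out of reach. So there is no ``paper's proof'' to compare against for the full statement, and your proposal is better read as a programme for the special cases treated elsewhere in the paper.

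For the connective smooth proper case your route diverges substantially from the paper's. You propose to pass through the bounded $t$-structure with heart $\H^{0}(\A)$, show surjectivity of the Dennis trace for finite-dimensional algebras, and then separately control $\Ch^{\top}(\K_{0}^{\top}(\A)_{\Q})$ via a weight/degeneration argument; you flag this last step as the hardest. The paper bypasses all of this: it invokes the result of Raedschelders--Stevenson that for a smooth proper connective $\3\dg$ algebra $\a$ one has an isomorphism of rational noncommutative Chow motives $\U(\a)_{\Q}\cong\bigoplus\U(\C)_{\Q}$, and combines this with the additivity of the conjecture under direct-sum decompositions of noncommutative motives. Since every additive invariant in the defining diagram factors through $\U(\bullet)_{\Q}$, the motive decomposition simultaneously decomposes $\K_{0}$, $\K_{0}^{\top}$, $\HN_{0}$, $\HC_{0}^{\per}$ and $\HH_{0}$ compatibly, so the problematic control of $\K_{0}^{\top}$ never arises. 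Your approach could in principle work but would require the explicit $\HC^{\per}$ computation and topological Chern image control you mention, whereas the motivic argument packages everything into a single stroke and is considerably shorter.
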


 For the smooth proper $\3\dg$ categories, we propose an equivalent version of rational Hodge conjecture, for the reason that they are equivalent, see Remark \ref{remconj}. We write $\H$ as the isomorphism $\HC^{\per}_{0}(\A)\cong^{\H} \oplus \HH_{2\n}(\A)$ which is the Hodge decomposition by degeneration of noncommutative Hodge-to de Rham spectral sequence\cite{kaledin2016spectral}. Note that we choose a splitting. Define the rational classes in $\HC^{\per}_{0}(\A)$ as $\Ch^{\top}(\K^{\top}_{0}(\A)_{\Q})\cap \j (\HN_{0}(\A))$. Then we define the Hodge classes in $\HH_{0}(\A)$ as
 $$\Hodge(\A)=\mathsf{Pr}\circ\H(\Ch^{\top}(\K^{\top}_{0}(\A)_{\Q})\cap \j (\HN_{0}(\A))).$$
 Here the map $\mathsf{Pr}$ is the projection from $\oplus\HH_{2\n}(\A)$ to $\HH_{0}(\A)$. Clearly the natural Chern character map $\K_{0}(\A)_{\Q}$ to $\Hodge(\A)$.

\begin{defn}\label{smpconjA}(= Definition \ref{smpconj})
Hodge conjecture for smooth proper $\3\dg$ categories: the Chern character $\Ch: \K_{0}(\A)\rightarrow \HH_{0}(\A)$ maps $\K_{0}(\A)_{\Q}$ surjectively into the Hodge classes $\Hodge(\A)$.
\end{defn}
We prove that the noncommutative Hodge conjecture is equivalent to the classical Hodge conjecture when the $\3\dg$ category is $\Per_{\3\dg}(\X)$. The version of Hodge conjecture is equivalent with the one in \cite{perry2020integral} for admissible subcategories of $\b\X$, see Theorem \ref{admissible}.

\begin{thm}(=Theorem \ref{NHodge}).
   Let $\X$ be a smooth projective variety.
   $$\text{\it Hodge conjecture for}\ \X\ \Leftrightarrow\ \text{\it Noncommutative Hodge conjecture for}\ \Per_{\3\dg}(\X).$$
\end{thm}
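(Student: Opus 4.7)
The strategy is to reduce each ingredient appearing in the definition of $\Hodge(\Per_{\3\dg}(\X))$ to its classical counterpart via HKR-type isomorphisms, and then to recognize the resulting surjectivity statement as the classical Hodge conjecture for $\X$.

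First, I use three compatible HKR-type identifications for $\A=\Per_{\3\dg}(\X)$: the classical HKR gives $\HH_{0}(\A)\cong\bigoplus_{p}\H^{p,p}(\X,\C)$; the Feigin--Tsygan/Weibel isomorphism gives $\HC^{\per}_{0}(\A)\cong\H^{\text{even}}(\X,\C)$; and the negative cyclic version (using degeneration of the Hodge-to-de Rham spectral sequence, cf.\ Kaledin) gives $\HN_{0}(\A)\cong\bigoplus_{n}F^{n}\H^{2n}(\X,\C)$, where $F^{\bullet}$ is the Hodge filtration. Under these identifications, $\j$ becomes the inclusion $\bigoplus_{n}F^{n}\H^{2n}\hookrightarrow\bigoplus_{n}\H^{2n}$ and $\pi$ becomes the projection onto the top graded piece $F^{n}/F^{n+1}=\H^{n,n}$.

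Second, by Blanc's comparison theorem for the topological K-theory of smooth proper schemes, $\Ch^{\top}\colon\K^{\top}_{0}(\A)_{\Q}\to\HC^{\per}_{0}(\A)$ has image $\H^{\text{even}}(\X,\Q)\subset\H^{\text{even}}(\X,\C)$ (any Todd twist preserves the rational structure, as the Todd class is algebraic). Combining this with the previous step,
$$\j^{-1}\bigl(\Ch^{\top}(\K^{\top}_{0}(\A)_{\Q})\bigr)=\bigoplus_{n}\bigl(\H^{2n}(\X,\Q)\cap F^{n}\H^{2n}(\X,\C)\bigr).$$
A standard Hodge-theoretic argument (a rational class in $F^{n}$ also lies in $\bar{F}^{n}$, hence in $\H^{n,n}$) identifies the right-hand side with $\bigoplus_{n}\Hodge^{n}(\X)_{\Q}$, the rational Hodge classes, which already sit inside $\H^{n,n}(\X)$. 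Applying $\pi$ then acts as the identity on such classes, so $\Hodge(\Per_{\3\dg}(\X))=\bigoplus_{n}\Hodge^{n}(\X)_{\Q}$ inside $\bigoplus_{p}\H^{p,p}(\X,\C)$.

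Third, I check that the noncommutative Chern character $\Ch\colon\K_{0}(\A)_{\Q}\to\HH_{0}(\A)$ corresponds under HKR to the classical Chern character $\K_{0}(\X)_{\Q}\to\bigoplus_{n}\H^{n,n}(\X,\C)$. By Grothendieck--Riemann--Roch, its image coincides with that of $\bigoplus_{n}\CH^{n}(\X)_{\Q}$ under the cycle class maps. Hence the surjectivity asserted by the noncommutative Hodge conjecture for $\Per_{\3\dg}(\X)$ is equivalent to the surjectivity of cycle class maps $\CH^{n}(\X)_{\Q}\to\Hodge^{n}(\X)_{\Q}$ for every $n$, which is exactly the classical rational Hodge conjecture for $\X$. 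The main obstacle will be assembling these compatibilities into one commutative picture: the HKR-type isomorphisms (Kaledin), the rational structure on $\HC^{\per}_{0}$ coming from $\K^{\top}_{0}$ (Blanc), and the matching of noncommutative and classical Chern characters. Each ingredient is known in isolation, but tracking signs, Todd factors, and the choice of splitting in the Hodge decomposition requires care.
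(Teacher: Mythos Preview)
Your proposal is correct and follows essentially the same strategy as the paper: set up a commutative diagram identifying $\HH_{0}$, $\HN_{0}$, $\HC^{\per}_{0}$, $\K^{\top}_{0}$ of $\Per_{\3\dg}(\X)$ with their classical counterparts via HKR/Keller/Weibel/Blanc, observe that under these identifications the noncommutative Chern character becomes the classical one, and conclude. Your write-up is in fact more explicit than the paper's own proof on one point---you spell out the computation of $\Hodge(\Per_{\3\dg}(\X))$ as $\bigoplus_{n}(\H^{2n}(\X,\Q)\cap F^{n}\H^{2n}(\X,\C))$ and invoke the standard Hodge-theoretic fact that a rational class in $F^{n}$ lies in $\H^{n,n}$, whereas the paper leaves this implicit in the diagram.
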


 The author also proves that the Hodge conjecture is additive for geometric semi-orthogonal decomposition with independent method.

\begin{thm}(=Theorem \ref{SODHodge}).
 Suppose we have a nontrivial semi-orthogonal decomposition of derived category $\b\X=\langle \A,\B \rangle$ such that $\A$ and $\B$ are geometric, that is, $\B\cong \1\b\Y$ and $\A\cong \2\b\Z$ for some varieties $\Y$ and $\z$. Then, Hodge conjecture is true for $\X$ if and only if it is true for $\Y$ and $\z$.
\end{thm}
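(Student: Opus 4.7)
The plan is to extract algebraic Fourier--Mukai correspondences from the SOD and transfer Hodge classes across them, bypassing the $\3\dg$-categorical framework of the earlier sections. By Orlov's representability theorem the fully faithful embeddings $\1\b\Y \hookrightarrow \b\X$ and $\2\b\Z \hookrightarrow \b\X$ are implemented by Fourier--Mukai kernels $\E_\Y \in \mathsf{D}^{\mathsf{b}}(\Y \times \X)$ and $\E_\z \in \mathsf{D}^{\mathsf{b}}(\z \times \X)$. Their Mukai vectors $\v(\E_\bullet) = \Ch(\E_\bullet)\cdot\sqrt{\Td}$ are rational algebraic classes on the respective products, since $\Td$ is polynomial in Chern classes of the tangent bundle and $\sqrt{\Td}$ is well defined because its constant term is $1$. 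Including the adjoint kernels, the SOD gives an orthogonal idempotent decomposition of the diagonal class $[\D_\X]\in\CH^*(\X\times\X)_\Q$, which by \Poincare\ duality induces an isomorphism of rational Hodge structures
$$\Phi_{\E_\Y}^\H \oplus \Phi_{\E_\z}^\H : \H^*(\Y,\Q) \oplus \H^*(\z,\Q) \xrightarrow{\ \sim\ } \H^*(\X,\Q),$$
each summand being a morphism of Hodge structures because $\v(\E_\bullet)$ is of pure type $(\p,\p)$ in every Chow grading.

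For the ``if'' direction, a Hodge class $\alpha\in\H^{2\p}(\X,\Q)\cap\H^{\p,\p}(\X)$ decomposes uniquely via the adjoint cohomological Fourier--Mukai correspondences as $\alpha=\Phi_{\E_\Y}^\H(\alpha_\Y)+\Phi_{\E_\z}^\H(\alpha_\z)$; because the splitting respects Hodge structures, $\alpha_\Y$ and $\alpha_\z$ are themselves Hodge on $\Y$ and $\z$. Invoking the Hodge conjecture for $\Y$ and $\z$, each $\alpha_\bullet$ is represented by an algebraic cycle, and pushing forward along the algebraic correspondence $\v(\E_\bullet)$ then exhibits $\alpha$ as algebraic on $\X$. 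The ``only if'' direction is symmetric: a Hodge class $\beta$ on $\Y$ maps to a Hodge class on $\X$ under $\Phi_{\E_\Y}^\H$, which by the Hodge conjecture for $\X$ is realized algebraically, and is then transported back to $\Y$ by the adjoint algebraic correspondence, recovering $\beta$ modulo the identity of $\1\b\Y$; the identical argument applies to $\z$.

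The main obstacle I foresee is bookkeeping cohomological shifts and Hodge bidegrees. The cohomological Fourier--Mukai map does not respect the integer grading of $\H^*$ but does send $\H^{\p,\q}$ into $\bigoplus_\r \H^{\p+\r,\q+\r}$, so one must check that $(\p,\p)$-classes on $\X$ arise from sums of pure $(\p',\p')$-classes on $\Y$ and $\z$ without mixing with other bidegrees, and then isolate the individual graded piece by algebraic (K\"unneth) projection. A subsidiary point is upgrading the displayed additive splitting to a genuine isomorphism of rational Hodge structures; the cleanest route combines $\SOD$-additivity of Hochschild homology with the HKR isomorphism to split the total Hodge cohomology of $\X$ into the corresponding sum for $\Y$ and $\z$, and then promotes the splitting via algebraicity of the kernels. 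Once this structural input is granted, the remainder is a formal chase along algebraic correspondences, independent of the $\3\dg$-categorical machinery developed earlier.
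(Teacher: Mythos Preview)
Your approach is essentially the same as the paper's: both extract Fourier--Mukai kernels via Orlov's representability, pass to Mukai vectors as algebraic correspondences on the products, use additivity of Hochschild homology (plus $\mathsf{HKR}$) for the dimension count, and then shuttle Hodge classes back and forth along the adjoint correspondences. The one technical point the paper makes explicit and you gloss over is that the identities $\R_{\H}\circ\i_{\H}=\mathsf{id}$ and $\L_{\H}\circ\i_{\H}=0$ on cohomology require knowing that a Fourier--Mukai functor which is zero on objects has zero kernel (the paper's Lemma \ref{lem 2.2}); this is what lets you pass from the categorical relations $\R\circ\i\cong\mathsf{id}$, $\L\circ\i\cong 0$ to equalities of cycle classes, and hence underpins your ``orthogonal idempotent decomposition of the diagonal.''
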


\begin{rem}
   We use this to obtain some results to prove that the commutative Hodge conjecture is a birational invariant for $4$ and $5$ dimensional varieties, see Theorem \ref{4fouldRinvariant}, which may be classically known for the experts, see also \cite{meng2019hodge}.
\end{rem}

After establishing the language of noncommutative Hodge conjecture, the author proves that the conjecture is additive for general $\SOD$s and the noncommutative motives.

\begin{thm}\label{sod}(=Theorem \ref{SODHodge}).
 Suppose we have a $\SOD$, $\b\X=\langle \A,\B \rangle$. There are natural $\3\dg$ liftings $\A_{\3\dg}$, $\B_{\3\dg}$ of $\A$, $\B$  corresponding to $\3\dg$ enhancement $\Per_{\3\dg}(\X)$ of $\b\X$.
   $$\text{\it Hodge conjecture for}\ \X\ \Leftrightarrow \text{\it Noncommutative Hodge conjecture for}\ \A_{\3\dg}\ \text{\it and}\ \B_{\3\dg}.$$
\end{thm}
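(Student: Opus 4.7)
The plan is to combine Theorem \ref{NHodge} with an additivity statement for the noncommutative Hodge conjecture under semi-orthogonal decompositions. By Theorem \ref{NHodge}, the Hodge conjecture for $\X$ is equivalent to the noncommutative Hodge conjecture for $\Per_{\3\dg}(\X)$, so it suffices to show that the noncommutative Hodge conjecture for $\Per_{\3\dg}(\X)$ is equivalent to the conjunction of the noncommutative Hodge conjectures for $\A_{\3\dg}$ and $\B_{\3\dg}$.

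The main input is the additivity of each invariant appearing in the defining commutative diagram under the $\SOD$ $\Per_{\3\dg}(\X)=\langle \A_{\3\dg},\B_{\3\dg}\rangle$. I would invoke: Thomason--Waldhausen additivity for $\K_0$; Keller's additivity theorems for $\HH_0$, $\HN_0$ and $\HC^{\per}_0$ (via the localization sequence of mixed complexes attached to a short exact sequence of $\3\dg$ categories); and the corresponding additivity for Blanc's topological $\K$-theory $\K^{\top}_0$, due to Blanc and refined by Antieau--Heller/Tabuada. Each of these yields a natural direct sum decomposition of the respective group; since $\Ch$, $\Ch^{\top}$, $\j$, and $\pi$ are natural transformations of functors on $\3\dg$-categories, the whole commutative diagram of the introduction splits as the direct sum of the analogous diagrams for $\A_{\3\dg}$ and $\B_{\3\dg}$.

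Diagram-chasing through this splitting, I would deduce the equality
\[
  \Hodge(\Per_{\3\dg}(\X)) \;=\; \Hodge(\A_{\3\dg})\oplus \Hodge(\B_{\3\dg})
\]
inside $\HH_0(\Per_{\3\dg}(\X))$, and likewise $\K_0(\Per_{\3\dg}(\X))_{\Q}=\K_0(\A_{\3\dg})_{\Q}\oplus \K_0(\B_{\3\dg})_{\Q}$ with the Chern character preserving the two summands. Surjectivity of $\Ch$ onto $\Hodge$ is then tautologically equivalent for the total category and for each factor, and combining with Theorem \ref{NHodge} closes the argument.

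The main technical obstacle I anticipate is ensuring compatibility of the chosen splitting $\H\colon \HC^{\per}_0(\A)\cong \bigoplus \HH_{2\n}(\A)$ of the Kaledin--Hodge-to-de Rham degeneration (used in Definition \ref{smpconjA} via the projector $\mathsf{Pr}$) with the $\SOD$ decomposition of $\HC^{\per}_0$. This reduces to selecting the splittings on $\A_{\3\dg}$ and $\B_{\3\dg}$ as those induced by the fixed splitting on $\Per_{\3\dg}(\X)$ through the admissible inclusion and its left/right adjoint, and then invoking naturality of the noncommutative Hodge filtration; once this is done, $\mathsf{Pr}$ commutes with the direct sum and the decomposition of $\Hodge$ follows. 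A secondary, but easier, point is to invoke Remark \ref{remconj} to identify the two definitions of $\Hodge(-)$ (via $\pi\circ\j^{-1}$ and via $\mathsf{Pr}\circ \H$) on the smooth proper $\3\dg$ categories $\A_{\3\dg}$, $\B_{\3\dg}$, and $\Per_{\3\dg}(\X)$.
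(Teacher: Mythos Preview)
Your proposal is correct and follows essentially the same route as the paper: reduce to $\Per_{\3\dg}(\X)$ via Theorem \ref{NHodge}, then use that all the invariants in the defining diagram ($\K_0$, $\K_0^{\top}$, $\HN_0$, $\HC_0^{\per}$, $\HH_0$) and the natural transformations between them split under the $\SOD$, so that $\Hodge(-)$ and $\Ch$ split compatibly. The only cosmetic difference is that the paper phrases the splitting concretely by invoking the Kuznetsov--Lunts lift of the $\SOD$ to a gluing $\mathsf{D}\simeq\Per_{\3\dg}(\X)$ with explicit adjoint maps $\i,\j,\L,\R$ (and then Theorem \ref{MoritaHodge}), whereas you cite the abstract additivity theorems directly; your discussion of the splitting $\H$ is more cautious than necessary, since the paper works with the splitting-free definition $\Hodge(\A)=\pi(\j^{-1}(\Ch^{\top}(\K_0^{\top}(\A)_{\Q})))$ throughout this argument.
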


 \begin{thm}\label{Nmotive}(=Theorem \ref{SODNMotive})
 Let $\A$, $\B$ and $\c$ be smooth and proper $\3\dg$ categories. Suppose there is a direct sum decomposition$\colon$
 $\U(\c)_{\Q}\cong \U(\A)_{\Q}\oplus \U(\B)_{\Q}$, see section \ref{section4.2} for the definition of $\U(\bullet)$ and $\U(\bullet)_{\Q}$. We have the following.
 $$\text{\it Noncommutative Hodge conjecture for}\ \c \Leftrightarrow  \text{\it Noncommutative Hodge conjecture for}\ \A\ and\ \B.$$
\end{thm}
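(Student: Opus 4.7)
The plan is to exploit the fact that every invariant entering the definition of $\Hodge(\bullet)$ is an additive invariant of $\3\dg$ categories and therefore factors through $\U(-)_{\Q}$: this holds for $\K_{0}$, $\HH_{0}$, $\HN_{0}$, $\HC_{0}^{\per}$, and for the topological $\K$-theory $\K_{0}^{\top}$ by Blanc's construction together with its motivic refinement recorded in the noncommutative motives framework used throughout the paper. Moreover, the maps $\Ch$, $\pi$, $\j$ and $\Ch^{\top}$ appearing in the commutative diagram of the introduction are natural transformations between these additive invariants, so the entire diagram is a diagram of functors on the category of noncommutative motives with rational coefficients.

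Granted this, I would proceed as follows. First, apply each of the five invariants to the given decomposition $\U(\c)_{\Q}\cong \U(\A)_{\Q}\oplus\U(\B)_{\Q}$ to obtain compatible direct sum decompositions of $\K_{0}(\c)_{\Q}$, $\HH_{0}(\c)_{\Q}$, $\HN_{0}(\c)_{\Q}$, $\HC^{\per}_{0}(\c)_{\Q}$ and $\K^{\top}_{0}(\c)_{\Q}$ into the corresponding groups for $\A$ and $\B$. Since $\Ch$, $\pi$, $\j$ and $\Ch^{\top}$ are natural, these decompositions are intertwined by the defining maps. Applying the definition of $\Hodge(-)$ to the resulting split diagram then yields
$$\Hodge(\c)\;\cong\;\Hodge(\A)\,\oplus\,\Hodge(\B),$$
with the Chern character $\K_{0}(\c)_{\Q}\to \Hodge(\c)$ being the direct sum of the Chern characters for $\A$ and $\B$. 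The theorem then follows from the tautology that a direct sum of two linear maps is surjective if and only if each summand is surjective.

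The step requiring the most care is compatibility with the chosen splitting $\H$ of the noncommutative Hodge-to-de Rham spectral sequence in Definition \ref{smpconjA}: one must know that a single such splitting for $\c$ can be arranged to restrict to splittings for $\A$ and $\B$. Because the spectral sequence is built out of the additive invariants $\HN$ and $\HC^{\per}$, and its comparison map $\j$ is a natural transformation of functors on $\U(-)_{\Q}$, any splitting for $\c$ decomposes under $\U(\c)_{\Q}\cong \U(\A)_{\Q}\oplus\U(\B)_{\Q}$ into compatible splittings on the summands, and this is the technical point I would verify carefully. As a safety net, one can instead appeal to Remark \ref{remconj}, which asserts the equivalence of the splitting-dependent formulation of Definition \ref{smpconjA} with the intrinsic splitting-free formulation, and then run the argument directly with the latter, thereby eliminating any dependence on auxiliary choices.
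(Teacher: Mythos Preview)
Your proposal is correct and follows essentially the same route as the paper's own proof, which simply records that periodic and negative cyclic homology and rational (topological and algebraic) $\K$-theory are additive invariants landing in idempotent-complete targets, hence factor through $\U(-)_{\Q}$, and then defers to the argument of Theorem~\ref{SODHodge}. Your write-up is in fact more careful than the paper's sketch: you make explicit the role of the natural transformations $\Ch$, $\pi$, $\j$, $\Ch^{\top}$, and you flag and resolve the potential issue with the choice of splitting $\H$ by invoking the splitting-free formulation via Remark~\ref{remconj}, a point the paper leaves implicit.
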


Let $\A$ be a sheaf of Azumaya algebras on $\X$. Using work of G.\ Tabuada and Michel Van~den Bergh on Azumaya algebras\cite[Theorem 2.1]{tabuadavandenbergh2015},
$\U(\Per_{\3\dg}(\X,\A))_{\Q}\cong \U(\Per_{\3\dg}(\X))_{\Q}$. We have
the following.

\begin{thm}\label{Twisted}(=Theorem \ref{egTwistedscheme1})
  Noncommutative Hodge conjecture for $\Per_{\3\dg}(\X,\A)$ $\Leftrightarrow$ Noncommutative Hodge conjecture for $\Per_{\3\dg}(\X)$.
\end{thm}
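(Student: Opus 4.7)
The plan is to deduce this directly from the motivic additivity result Theorem \ref{Nmotive}, using the isomorphism of noncommutative motives of G.~Tabuada and M.~Van den Bergh quoted just before the statement. Once Theorem \ref{Nmotive} is in hand, the argument is essentially formal.

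First I would note that, since $\X$ is a smooth projective variety and $\A$ is a sheaf of Azumaya algebras on $\X$, both $\Per_{\3\dg}(\X)$ and $\Per_{\3\dg}(\X,\A)$ are smooth proper dg categories; this places us inside the hypotheses of Theorem \ref{Nmotive}. Second, I would invoke [Tabuada--Van den Bergh, Theorem 2.1] to obtain
$$\U(\Per_{\3\dg}(\X,\A))_\Q \;\cong\; \U(\Per_{\3\dg}(\X))_\Q$$
in the category of noncommutative motives with rational coefficients. Rewriting this as a trivial direct sum
$$\U(\Per_{\3\dg}(\X,\A))_\Q \;\cong\; \U(\Per_{\3\dg}(\X))_\Q \,\oplus\, \U(0)_\Q,$$
with $0$ the zero dg category (so that $\U(0)_\Q = 0$), places us in the setting of Theorem \ref{Nmotive}. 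That theorem then gives the equivalence between the noncommutative Hodge conjecture for $\Per_{\3\dg}(\X,\A)$ and the conjunction of the conjecture for $\Per_{\3\dg}(\X)$ with the conjecture for the zero dg category. The latter is vacuous, since all the invariants $\K_0(-)_\Q$, $\HN_0(-)$, $\HC^{\per}_0(-)$, $\K^{\top}_0(-)_\Q$ vanish on the zero category. This yields the claimed equivalence.

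All the real work is packaged into the proof of Theorem \ref{Nmotive}, where one must check that $\K_0(-)_\Q$, $\HN_0(-)$, $\HC^{\per}_0(-)$, and (by A.~Blanc) $\K^{\top}_0(-)_\Q$, together with the Chern characters $\Ch$, $\Ch^{\top}$ and the comparison map $\j$, all factor through noncommutative motives with rational coefficients in a way that respects the Hodge decomposition and thereby identifies the subspaces $\Hodge(-)$ on both sides. Consequently the present statement presents no new obstacle: its one non-formal input is the Tabuada--Van den Bergh identification of motives, which reduces the twisted case to the untwisted case with no further data to compare.
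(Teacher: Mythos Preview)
Your proposal is correct and matches the paper's own proof essentially verbatim: the paper invokes \cite[Theorem 2.1]{tabuadavandenbergh2015} to get $\U(\Per_{\3\dg}(\X,\A))_{\Q}\cong \U(\Per_{\3\dg}(\X))_{\Q}$ and then applies Theorem~\ref{Nmotive} (= Theorem~\ref{SODNMotive}). Your extra step of writing the isomorphism as a direct sum with the zero summand is a harmless formality making the application of Theorem~\ref{Nmotive} explicit.
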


This formulation of the noncommutative Hodge conjecture is compatible with the semi-orthogonal decompositions. Therefore, good knowledge of semi-orthogonal decomposition of varieties can simplify the Hodge conjecture, and gives new evidence of the Hodge conjecture. The survey ``Noncommutative counterparts of celebrated conjectures''\cite[Section 2]{tabuada2019noncommutative} provides many examples of the applications to the geometry for some conjectures via this approach. The examples also apply to the
noncommutative Hodge conjecture, and we give some further examples which are combined in the theorem below.

\begin{thm}\label{Example}
  Combining Theorem \ref{sod}, Theorem \ref{Nmotive}, and Theorem \ref{Twisted}, we have

 \begin{enumerate}
   \item \textbf{Fractional Calabi--Yau categories.}\\
   Let $\X$ be a hypersurface of degree $\leq \n+1$ in $\mathbb{P}^{\n}$. There is a semi-orthogonal decomposition
$$\Perf(\X)=\langle \mathcal{T}(\X),\O_{\X},\cdots,\O_{\X}(\n-\mathsf{deg}(\X))\rangle.$$
$\mathcal{T}(\X)$ is a fractional Calabi--Yau of dimension $\frac{(\n+1)(\mathsf{deg}-2)}{\mathsf{deg}(\X)}$\cite[Theorem 3.5]{Kuznetsov_2019}. We write $\mathcal{T}_{\3\dg}(\X)$ for the full $\3\dg$ subcategory of $\Per_{\3\dg}(\X)$ whose objects belong to $\mathcal{T}(\X)$. Then
$$\text{\it Hodge conjecture of}\ \X\Leftrightarrow \text{\it Noncommutative Hodge conjecture of}\ \mathcal{T}_{\3\dg}(\X).$$

   \item \textbf{Twisted scheme.}\\
   (A).\ Let $\X$ be a cubic fourfold containing a plane. There is a semi-orthogonal decomposition
 $$\Perf(\X)=\langle\Perf(\S,\A),\O_{\X},\O_{\X}(1),\O_{\X}(2)\rangle.$$
 $\mathsf{S}$ is a $\K_{3}$ surface, and $\A$ is a sheaf of Azumaya algebra over $\mathsf{S}$\cite[Theorem 4.3]{Kuznetsov2010DerivedCO}. Since the noncommutative Hodge conjecture is true for $\Per_{\3\dg}(\mathsf{S},\A)$ by Theorem \ref{Twisted}, hence the Hodge conjecture is true for $\X$.\\
   (B).\ Let $\f\colon \X\longrightarrow \mathsf{S}$ be a smooth quadratic
   fibration, for example, smooth quadric in relative projective space $\mathbb{P}^{\n+1}_{\mathsf{S}}$ \cite{Kuznetsov2005DerivedCO}. There is a semi-orthogonal decomposition
   $$\Perf(\X)=\langle \Perf(\mathsf{S},\mathsf{Cl}_{0}),\Perf(\mathsf{S}),\cdots,\Perf(\mathsf{S})\rangle.$$
   $\mathsf{Cl}_{0}$ is a sheaf of Azumaya algebra over $\mathsf{S}$ if the dimension $\n$ of the fiber of $\f$ is odd.

  \par

  Thus, if $\n$ is odd, the Hodge conjecture of $\X$ $\Leftrightarrow$ $\mathsf{S}$. Moreover, if $\dim \mathsf{S}\leq 3$, the Hodge conjecture for $\X$ is true.

   \item  \textbf{HP duality.}\\
   We write $\Hodge(\bullet)$ if the (noncommutative) Hodge conjecture is true for varieties (smooth and proper $\3\dg$ categories). Let $\Y\rightarrow\mathbb{P}(\V^{\ast})$ be the $\mathsf{HP}$ dual of $\X\rightarrow\mathbb{P}(\V)$, then $\Hodge(\X)\Leftrightarrow\Hodge(\Y)$. Choosing a linear subspace $\L\subset \V^{\ast}$. Let  $\X_{\L}=\X\times_{\mathbb{P}(\V)}\mathbb{P}(\L^{\perp})$ and $\Y_{\L}=\Y\times_{\mathbb{P}(\V^{\ast})}\mathbb{P}(\L)$ be the corresponding linear section. Assume $\X_{\L}$ and $\Y_{\L}$ are of expected dimension and smooth. If we assume $\Hodge(\X)$, then $\Hodge(\X_{\L})\Leftrightarrow \Hodge(\Y_{\L})$.
   \end{enumerate}
   \par
  \end{thm}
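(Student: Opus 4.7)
The plan is to treat each item as a direct application of the three additivity results from the excerpt, combined with two base facts: the noncommutative Hodge conjecture is automatic for $\Perf_{\3\dg}(\mathsf{pt})$ (where $\HH_0$ is $\Q$ and $\Ch \colon \K_0 \to \HH_0$ is already surjective rationally), and the classical Hodge conjecture is known for smooth projective varieties of dimension at most three. With these inputs in hand, each $\SOD$ displayed in the statement collapses under Theorem \ref{sod} to a noncommutative Hodge question on an explicit ``interesting'' component, which is then simplified further by Theorem \ref{Twisted} or Theorem \ref{Nmotive}.

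For (1), I would apply Theorem \ref{sod} directly to Kuznetsov's decomposition. Each line bundle $\O_\X(\i)$ generates a component equivalent to $\Perf_{\3\dg}(\mathsf{pt})$, for which the noncommutative Hodge conjecture is trivial, so the additivity statement of Theorem \ref{sod} reduces Hodge$(\X)$ to noncommutative Hodge for $\mathcal{T}_{\3\dg}(\X)$. For (2A) and (2B) the strategy is the same two-step reduction: first Theorem \ref{sod} collapses the $\SOD$ down to the twisted factor plus possibly an untwisted $\mathsf{S}$-factor, and then Theorem \ref{Twisted} strips the Azumaya twist, replacing $\Per_{\3\dg}(\mathsf{S}, \A)$ (respectively $\Per_{\3\dg}(\mathsf{S}, \mathsf{Cl}_0)$, when $\n$ is odd) by $\Per_{\3\dg}(\mathsf{S})$, hence by the classical Hodge conjecture for $\mathsf{S}$. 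In (2A) this is the Lefschetz $(1,1)$ theorem for a K3; in (2B) it is the classical conjecture in dimension $\leq 3$ applied to the base $\mathsf{S}$.

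For (3), the HP-duality package of Kuznetsov yields compatible Lefschetz $\SOD$s of $\Per_{\3\dg}(\X)$ and $\Per_{\3\dg}(\Y)$ built from Morita twists of a common primitive block $\A_0$; the HP-dual relation is encoded at the level of noncommutative motives as an isomorphism $\U(\A_0)_\Q \cong \U(\B_0)_\Q$. Each auxiliary block, being a twist of $\A_0$ by a line bundle, has the same noncommutative Hodge conjecture, so Theorem \ref{sod} (applied separately to the Lefschetz $\SOD$s of $\X$ and $\Y$) together with Theorem \ref{Nmotive} (invoked on $\U(\A_0)_\Q \cong \U(\B_0)_\Q$) gives Hodge$(\X) \Leftrightarrow$ Hodge$(\Y)$. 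For the linear section claim, the analogous HP-dual $\SOD$s for $\X_\L$ and $\Y_\L$ share a common noncommutative piece $\c_\L$; assuming Hodge$(\X)$ one obtains noncommutative Hodge for $\A_0$, hence for every auxiliary block on both sides, so Hodge$(\X_\L)$ and Hodge$(\Y_\L)$ both become equivalent to noncommutative Hodge for $\c_\L$, and thus to each other.

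The main obstacle is organisational rather than substantive. In (3) the delicate point is verifying that the HP-duality $\SOD$s genuinely lift to the $\3\dg$ level and are compatible with the noncommutative motive framework so that Theorem \ref{Nmotive} applies verbatim, and that the Lefschetz blocks of $\X$ and $\Y$ really are Morita equivalent to shifts of a single primitive component. Both are standard outputs of the Kuznetsov HP-duality package but need careful tracking at the $\3\dg$ level; everything else is a routine assembly of the additivity theorems.
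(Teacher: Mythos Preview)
Your overall strategy matches the paper's: apply the additivity theorem to each displayed $\SOD$, discard the trivial exceptional pieces, and reduce the twisted components via Theorem \ref{Twisted}. Parts (1), (2A), (2B) are exactly as the paper argues.

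In part (3) there is a small but genuine inaccuracy in how you describe the Lefschetz decomposition. The blocks in $\b\X=\langle \A_0,\A_1(1),\ldots,\A_{\m-1}(\m-1)\rangle$ are \emph{not} twists of a single $\A_0$; rather one has a chain of admissible inclusions $\A_{\m-1}\subset\cdots\subset\A_1\subset\A_0$, and in general these are proper. The correct reduction $\Hodge(\X)\Leftrightarrow\Hodge(\A_0)$ therefore uses that each $\A_\i$ is admissible in $\A_0$: if $\Hodge(\A_0)$ holds, additivity applied to $\A_0=\langle \A_\i,{}^{\perp}\A_\i\rangle$ yields $\Hodge(\A_\i)$ for every $\i$, hence $\Hodge(\X)$; the converse is immediate from Theorem \ref{sod}. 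The same remark applies on the $\Y$-side and to the auxiliary blocks in the linear-section $\SOD$s.

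A second, more minor point: the paper does not pass through $\U(\A_0)_\Q\cong\U(\B_0)_\Q$ and Theorem \ref{Nmotive}. Kuznetsov's HPD package gives an honest Fourier--Mukai equivalence $\A_0\cong\B_0$ (and $\mathcal{L}_{\X,\L}\cong\mathcal{L}_{\Y,\L}$), which lifts to a Morita equivalence of the natural $\3\dg$ enhancements; so Theorem \ref{MoritaHodge} already gives $\Hodge(\A_0)\Leftrightarrow\Hodge(\B_0)$ without invoking rational motives. Your route via Theorem \ref{Nmotive} would also work, but it is stronger than what is needed.
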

    We can prove (3) directly from the description of $\HPD$, see Theorem \ref{HPD}. For more examples constructed from $\HPD$, see Example \ref{egHPD}.
    Motivated from the noncommutative techniques, Theorem \ref{Example} (3), we expect that we can establish duality of
    the Hodge conjecture for certain linear section of the projective dual varieties by classical methods of algebraic geometry.

\begin{conj}(=Conjecture \ref{Conjprojectivedual})
 Let $\X\subset\mathbb{P}(\V)$ be a projective smooth variety. Suppose the Hodge conjecture is true for $\X$. Let $\Y\subset\mathbb{P}(\V^{\ast})$ be the projective dual of $\X\subset\mathbb{P}(\V)$. Choosing a linear subspace $\L\subset\V^{\ast}$.
  Suppose the linear section $\X_{\L}=\X\cap \mathbb{P}(\L^{\perp})$ and $\Y_{\L}=\Y\cap \mathbb{P}(\L)$ are both of expected dimension and smooth. Then, the Hodge conjecture of $\X_{\L}$ is equivalent to the Hodge conjecture of $\Y_{\L}$.
\end{conj}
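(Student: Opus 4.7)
The plan is to mirror classically the noncommutative argument of Theorem~\ref{Example}(3) by exhibiting an algebraic correspondence $\Gamma\in \CH^{\ast}(\X_{\L}\times \Y_{\L})_{\Q}$ whose cohomological action induces an isomorphism of rational polarised Hodge structures between the vanishing cohomologies of $\X_{\L}$ and of $\Y_{\L}$, up to a Tate twist. Combined with a standard Lefschetz-type reduction, such a $\Gamma$ transports algebraicity of Hodge classes in both directions between $\X_{\L}$ and $\Y_{\L}$, yielding the desired equivalence.

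The first step is a reduction to vanishing (primitive) cohomology. By the weak and hard Lefschetz theorems, the part of $\H^{\ast}(\X_{\L},\Q)$ coming from hyperplane powers and from restrictions of classes on $\X$ consists of Hodge classes that are algebraic under the assumption $\Hodge(\X)$; a parallel reduction on $\Y_{\L}$ (after replacing $\Y$ by a smooth model if necessary) follows from biduality together with the $\HPD$-type comparison of Theorem~\ref{Example}(3) applied to $\Y$. The conjecture therefore reduces to showing that the vanishing Hodge classes on $\X_{\L}$ are algebraic if and only if those on $\Y_{\L}$ are. The second step is to build the correspondence $\Gamma$ from natural incidence geometries; a candidate is the restriction to $\X_{\L}\times \Y_{\L}$ of the tautological incidence $\{(x,H)\in \X\times \Y : T_{x}\X\subset H\}$, blown up along the loci where the fibre dimensions jump and twisted by suitable hyperplane powers on the two factors. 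On the noncommutative side, the Fourier--Mukai kernel implementing Kuznetsov's $\HPD$ equivalence of Kuznetsov components yields such a correspondence automatically via its Chern character; the job here is to identify this class purely classically.

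The principal obstacle is the construction of $\Gamma$ and the verification that it induces an isomorphism of vanishing Hodge structures. Noncommutative $\HPD$ produces the kernel tautologically from the derived-categorical setup, while a purely classical construction would amount either to re-proving a cohomological shadow of $\HPD$ by direct algebro-geometric methods, or to running a variation-of-Hodge-structure argument over the universal family of linear sections parametrised by the Grassmannian of $\L$, in the spirit of Voisin's analyses of Fano varieties and their linear sections. A secondary subtlety is the discrepancy between the classical projective dual $\Y$, which is generically singular along higher-corank loci, and the noncommutative $\HPD$ dual, which is a categorical resolution; one must confirm that in the range where $\Y_{\L}$ is smooth of the expected dimension the two notions of dual linear section produce the same vanishing Hodge structure, up to an algebraic contribution absorbed by the Lefschetz reduction.
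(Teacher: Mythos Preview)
The statement you are attempting to prove is not proved in the paper: it is stated there as Conjecture~\ref{Conjprojectivedual}, explicitly flagged as an expectation ``motivated from the noncommutative techniques'' and left open. There is therefore no paper proof to compare your proposal against. What the paper does establish is the $\HPD$ version (Theorem~\ref{HPD}), where $\Y$ is the \emph{homological} projective dual rather than the classical projective dual; the conjecture asks whether the same duality of Hodge conjectures persists when one replaces the $\HPD$ dual by the classical dual and drops all categorical input.

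Your write-up is a research outline, not a proof, and you identify the essential difficulties yourself. The first genuine gap is the construction of the correspondence $\Gamma$: the Fourier--Mukai kernel from $\HPD$ lives on $\X_{\L}\times \Y_{\L}^{\mathrm{HPD}}$, not on $\X_{\L}\times \Y_{\L}$, and for $\n\geq 8$ in the Grassmannian--Pfaffian case the $\HPD$ dual is not even known to exist, so there is no kernel to take the Chern character of. Your candidate incidence correspondence is natural, but verifying that it induces an isomorphism on vanishing cohomology is exactly the hard content one would need, and there is no mechanism in your outline for doing this without re-deriving a cohomological $\HPD$. The second gap is the reduction step on the $\Y$ side: you invoke ``$\Hodge(\Y)$ after replacing $\Y$ by a smooth model'', but the classical projective dual $\Y$ is typically singular, the Hodge conjecture is not formulated for it, and passing to a resolution introduces exceptional cohomology that you have not controlled. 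The paper's own discussion (Example~\ref{egHPD}) shows that even in the low-dimensional cases where the conjecture can be checked, the verification proceeds case by case via rationality, Fano structure, or explicit exceptional collections, not via a uniform correspondence argument.
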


Finally, we obtain some results by the algebraic techniques.  A $\3\dg$ algebra $\a$ is called connective if $\H^{\i}(\a)=0$ for $\i > 0$. According to \cite[Theorem 4.6]{raedschelders2020proper}, if $\a$ is a connective smooth proper $\3\dg$ algebra, then $\U(\a)_{\Q}\cong \U(\H^{0}(\a)/\mathsf{Jac}(\H^{0}(\a)))_{\Q}\cong \oplus \U(\C)_{\Q}$. Thus, we have the following.

\begin{thm}\label{algebra1}
 The noncommutative Hodge conjecture is true for smooth proper and connected $\3\dg$ algebra $\a$, see Theorem \ref{propersmoothconectivealgebraHodge}. In particular, the noncommutative Hodge conjecture is true for smooth and proper algebras.
\end{thm}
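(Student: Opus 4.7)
The plan is to combine the structural result cited from \cite[Theorem 4.6]{raedschelders2020proper} with the additivity of the noncommutative Hodge conjecture in the category of noncommutative motives (Theorem \ref{Nmotive}), in order to reduce to the trivial Hodge conjecture for a point.

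First, I would rewrite the right hand side of the cited isomorphism $\U(\a)_{\Q} \cong \U(\H^{0}(\a)/\mathsf{Jac}(\H^{0}(\a)))_{\Q}$ more explicitly. Since $\a$ is proper, $\H^{0}(\a)$ is a finite dimensional $\C$-algebra, so $\H^{0}(\a)/\mathsf{Jac}(\H^{0}(\a))$ is finite dimensional and semisimple. By Wedderburn--Artin it splits as a finite product of matrix algebras $\prod_{\i=1}^{\n}\mathsf{M}_{\r_{\i}}(\C)$; each factor is Morita equivalent to $\C$, so Morita invariance of the universal noncommutative motive together with the conversion of finite products into direct sums of motives would yield
$$\U(\a)_{\Q}\;\cong\;\bigoplus_{\i=1}^{\n}\U(\C)_{\Q}.$$

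I would then apply Theorem \ref{Nmotive} iteratively, splitting off one summand $\U(\C)_{\Q}$ at a time; at each stage the complementary summand is a finite product of copies of $\C$ and hence again smooth and proper, so the hypotheses of the additivity theorem are met. This reduces the conjecture for $\a$ to the noncommutative Hodge conjecture for the $\3\dg$ algebra $\C$, which via $\C \cong \Per_{\3\dg}(\mathsf{Spec}\,\C)$ and Theorem \ref{NHodge} becomes the classical rational Hodge conjecture for a point, and is trivial. The ``in particular'' clause is immediate because any smooth proper ordinary $\C$-algebra, viewed as a $\3\dg$ algebra concentrated in degree zero, is connective. The only bookkeeping I expect to worry about is verifying that Morita invariance and the product-to-direct-sum translation interact cleanly with the rationalised universal motive $\U(-)_{\Q}$; beyond this I do not foresee any genuine obstacle, since all the serious work is absorbed into the cited Raedschelders--Van den Bergh decomposition and into Theorem \ref{Nmotive}.
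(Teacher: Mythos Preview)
Your proposal is correct and follows essentially the same route as the paper's own proof of Theorem \ref{propersmoothconectivealgebraHodge}: invoke the Raedschelders--Stevenson decomposition $\U(\a)_{\Q}\cong\bigoplus\U(\C)_{\Q}$ and then apply the additivity result (Theorem \ref{SODNMotive}) to reduce to a point. The paper simply cites both isomorphisms in the chain $\U(\a)_{\Q}\cong\U(\H^{0}(\a)/\mathsf{Jac}(\H^{0}(\a)))_{\Q}\cong\bigoplus\U(\C)_{\Q}$ directly from \cite[Corollary 4.3, Theorem 4.6]{raedschelders2020proper}, whereas you spell out the second step via Wedderburn--Artin and Morita invariance; this is only a difference in level of detail, not in strategy. (One small slip: the cited work is by Raedschelders and Stevenson, not Raedschelders and Van den Bergh.)
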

We also provide another proof for the case of smooth and proper algebras, see Theorem \ref{algebra}. Theorem \ref{algebra1} implies that if a variety $\X$ admits a tilting bundle (or sheaf), then the Hodge conjecture is true for $\X$, see the Corollary \ref{Tiltingsheaf} in the text.
\subsection*{Notation}\label{notation}
We assume the varieties to be defined over $\C$. We write $\SOD$ for semi-orthogonal decomposition of triangulated  categories. We say a semi-orthogonal decomposition is geometric if its components are equivalent to some derived categories of projective smooth varieties. We always assume the $\3\dg$ categories to be small categories.
We write $\k$ as the field $\C$ in some places without mentioning.

\begin{acks}
The author is grateful to his supervisor Will Donovan for helpful supports, discussions, and suggestions. The author would like to thank Anthony Blanc and Dmitry Kaledin for helpful discussions through E-mail. The author also thanks Shizhuo Zhang for informing the author about Alexander Perry's work  when the author finished most parts of the paper. The author is indebted to Alexander Perry for helpful comments and suggestions. The author thanks Michael Brown' comments, and pointing out a gap in the previous version concerning the issue of splitting of Hodge filtration. This leads the author to revising this new version which can avoid the issue of splitting of Hodge filtration.
\end{acks}

\vspace{5mm}

\section{Preliminary}
\subsection{The classical Hodge conjecture}
Given a projective smooth variety $\X$, there is a famous Hodge decomposition
$$\H^{\k}(\X(\C),\Z)\otimes\C \cong \oplus_{\p+\q=\k}\H^{\mathsf{p}}(\X,\Omega_{\X}^{\mathsf{q}})$$
where $\H^{\mathsf{p}}(\X,\Omega_{\X}^{\q})$ can be identified
with the $(\mathsf{p},\mathsf{q})$ classes in $\H^{\p+\q}(\X(\C),\C)$.
We define the rational (integral) Hodge classes as rational (integral) $(\mathsf{p},\mathsf{p})$ classes. By $\Poincare$ duality, there is a cycle map which relates the $\Chow$ group of $\X$ with its Betti cohomology
$$\mathsf{Cycle}\colon \quad \CH^{\ast}(\X)\longrightarrow \H^{\ast}(\X(\C),\C).$$
Clearly, the image lies in the integral Hodge classes. We obtain the rational cycle map when we tensor with $\Q$. The famous Hodge conjecture concerns whether the image of the (rational) cycle map is exactly the (rational) integral Hodge classes. It is well known that the integral Hodge conjecture is not true in general \cite{ATIYAH196225}, and the rational Hodge conjecture is still open. For more introductions to the classical Hodge conjecture, the reader can refer to the survey ``Some aspects of the Hodge conjecture'' \cite{voisin_2003}.

\begin{rem}
The rational (and integral) Hodge conjecture is true for weight one by Lefschetz one-one theorem. According to the $\Poincare$ duality, the rational Hodge conjecture is true for weight $\n-1$, $\n$ is the dimension of the variety. In particular, the rational Hodge conjecture is true for varieties of dimension less than or equal to 3.
\end{rem}

This paper focuses on the non-weighted rational Hodge conjecture. That is, we concern whether the rational cycle map maps $\CH^{\ast}(\X)_{\Q}$ surjectively into the rational Hodge classes.

\begin{thm}(Part of Grothendieck-Riemann-Roch [SGA6 exp.XIV]\cite{RR})\label{GRR}
  Let $\X$ be a smooth projective variety. There is a commutative diagram, where $\Ch_{\Q}$ are the certain Chern characters, $\K_{0}(\X)_{\Q}$ is the rational $0^{th}$ algebraic $\K$ group of the coherent sheaves.
  $$\xymatrix{\K_{0}(\X)_{\Q}\ar[r]^{\Ch_{\Q}}\ar[d]^{\cong}_{\Ch_{\Q}}&\H^{\ast}(\X,\C)\\
  \CH^{\ast}(\X)_{\Q}\ar[ru]_{\mathsf{cycle}}&}$$
\end{thm}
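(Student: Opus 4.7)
The statement packages two separate assertions: (i) the rational Chern character $\Ch_{\Q}\colon \K_{0}(\X)_{\Q}\to \CH^{\ast}(\X)_{\Q}$ is an isomorphism, and (ii) the upper triangle commutes, i.e., the composition $\mathsf{cycle}\circ \Ch_{\Q}$ agrees with the Betti Chern character $\K_{0}(\X)_{\Q}\to \H^{\ast}(\X,\C)$. The plan is to treat these in turn.

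For (i), I would construct $\Ch_{\Q}$ via the splitting principle. For a line bundle $\mathsf{L}$ set $\Ch(\mathsf{L})=\exp(c_{1}(\mathsf{L}))\in \CH^{\ast}(\X)_{\Q}$, and extend multiplicatively after pulling back to a complete flag bundle on which any vector bundle $\E$ admits a filtration with line bundle quotients. Additivity on short exact sequences then yields a well-defined ring homomorphism $\K_{0}(\X)\to \CH^{\ast}(\X)_{\Q}$. To see that this becomes an isomorphism after tensoring with $\Q$, one compares the $\gamma$-filtration $F^{\bullet}_{\gamma}\K_{0}(\X)$ with the codimension filtration on $\CH^{\ast}(\X)$: their associated gradeds agree modulo torsion, proven in SGA6 exp.~XIV by induction on codimension via deformation to the normal cone. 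This is the step I expect to be the main obstacle, because it requires careful control of torsion in algebraic $\K$-theory and is the arithmetic heart of Grothendieck--Riemann--Roch; in practice I would simply cite the SGA6 result rather than reprove it.

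For (ii), commutativity is essentially formal once the maps have been set up. The cycle class map is a graded ring homomorphism sending the algebraic first Chern class $c_{1}(\mathsf{L})\in \CH^{1}(\X)$ to the topological first Chern class in $\H^{2}(\X,\C)$. Both Chern characters are then computed by the same universal polynomial $\exp(c_{1})$ on line bundles, and are extended from line bundles to arbitrary vector bundles by the same splitting principle; hence commutativity on line bundles propagates to all of $\K_{0}(\X)$ by additivity and multiplicativity. Tensoring with $\Q$ preserves the commutativity and produces the diagram in the statement.
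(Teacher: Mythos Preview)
The paper does not prove this theorem at all: it is stated as a citation of a classical result from SGA6 exp.~XIV and used as a black box, with no proof environment following the statement. Your plan to ``simply cite the SGA6 result rather than reprove it'' is therefore exactly what the paper does, and your sketch of (i) via the $\gamma$-filtration and of (ii) via the splitting principle is a reasonable outline of the classical argument behind that citation; there is nothing further to compare.
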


The image of the Chern character is in the rational Hodge classes, and the rational Hodge conjecture can be reformulated that $\mathsf{Ch}_{\Q}$ maps $\K_{0}(\X)_{\Q}$ surjectively into the rational Hodge classes.

\begin{prop}\label{corollary 2.1}
 We have the Mukai vector $\v(\bullet)$
 $$\v\colon \K_{0}(\X)\longrightarrow \oplus\H^{\mathsf{p},\mathsf{p}}(\X),\quad \quad \E\mapsto \mathsf{Ch}(\E)\sqrt{\Td(\X)}$$
 The non-weighted Hodge conjecture can be reformulated that $\v_{\Q}$ maps $\K_{0}(\X)_{\Q}$ surjectively into the rational Hodge classes.
\end{prop}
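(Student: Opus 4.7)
The plan is to show that the Mukai vector $\v_{\Q}$ and the rational Chern character $\Ch_{\Q}$ have the same image inside $\oplus \H^{\p,\p}(\X,\C)$, so that the reformulation follows immediately from Theorem \ref{GRR}. The bridge between them is the invertible class $\sqrt{\Td(\X)}$.

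First I would verify that $\sqrt{\Td(\X)}$ makes sense as an element of $\oplus \H^{\p,\p}(\X,\Q)$. The Todd class $\Td(\X)$ is a polynomial with rational coefficients in the Chern classes of the tangent bundle $\mathsf{T}_{\X}$, all of which live in $\oplus \H^{\p,\p}(\X,\Q)$, and $\Td(\X)=1+\tfrac{1}{2}\mathsf{c}_{1}+\cdots$ has constant term $1$. Hence the formal power series $\sqrt{1+\mathsf{t}}=1+\tfrac{1}{2}\mathsf{t}-\tfrac{1}{8}\mathsf{t}^{2}+\cdots$ evaluated at $\Td(\X)-1$ terminates (for dimension reasons on $\X$) and produces a well-defined element of $\oplus\H^{\p,\p}(\X,\Q)$ whose $0$-graded piece equals $1$.

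Next I would observe that cup product with $\sqrt{\Td(\X)}$ defines a $\Q$-linear automorphism of $\oplus \H^{\p,\p}(\X,\Q)$: it is the identity plus a nilpotent operator (shifting degree strictly upwards), hence invertible, and it is clearly $\Q$-linear because $\sqrt{\Td(\X)}$ itself is rational. Consequently, multiplication by $\sqrt{\Td(\X)}$ sends the subspace of rational Hodge classes bijectively to itself. Combined with this, the identity $\v(\E)=\Ch(\E)\cdot\sqrt{\Td(\X)}$ gives
\[
\v_{\Q}(\K_{0}(\X)_{\Q})\;=\;\sqrt{\Td(\X)}\cdot\Ch_{\Q}(\K_{0}(\X)_{\Q}),
\]
and since the right-hand factor is a $\Q$-linear automorphism preserving the rational Hodge classes, $\v_{\Q}$ surjects onto the rational Hodge classes if and only if $\Ch_{\Q}$ does. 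By Theorem \ref{GRR} and the surrounding discussion, the latter is equivalent to the non-weighted rational Hodge conjecture, which finishes the argument.

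There is no real obstacle here; the only point requiring care is checking that $\sqrt{\Td(\X)}$ is a rational class lying in the $(\p,\p)$-part, which comes down to the rationality of the Todd polynomial and the fact that Chern classes of a holomorphic tangent bundle are of Hodge type $(\p,\p)$.
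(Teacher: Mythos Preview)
Your proposal is correct and follows essentially the same approach as the paper: both arguments reduce the claim to Theorem~\ref{GRR} by noting that multiplication by $\sqrt{\Td(\X)}$ is an automorphism of $\oplus\H^{\p,\p}(\X)$ preserving the rational Hodge classes, so $\v_{\Q}$ and $\Ch_{\Q}$ are surjective onto the rational Hodge classes simultaneously. The paper simply asserts this isomorphism via a commutative diagram, whereas you spell out the reasons (rationality of the Todd polynomial, constant term $1$, identity-plus-nilpotent), which is a welcome elaboration but not a different strategy.
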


\begin{proof}
 There is a commutative diagram$\colon$
 $$\xymatrix@C6pc@R3pc{\K_{0}(\X)\ar[r]^{\v}\ar[rd]_{\Ch}&\oplus\H^{\p,\p}(\X)\ar[d]_{\cong}^{\frac{1}{\sqrt{\mathsf{Td}(\X)}}}\\
 &\oplus\H^{\p,\p}(\X)}$$
 Since the vertical morphism is an isomorphism which preserves the rational Hodge classes, $\v_{\Q}~$ maps $\K_{0}(\X)_{\Q}$ surjectively into the rational Hodge classes if and only if $\Ch_{\Q}$ maps $\K_{0}(\X)_{\Q}$ surjectively into the rational Hodge classes. Thus, the statement follows from the Theorem \ref{GRR} above.
\end{proof}

\subsection{Noncommutative geometry}
 We briefly recall the theory of noncommutative spaces.
 We regard certain $\3\dg$ categories as noncommutative counterparts of varieties. We will recall the basic notions. For survey of the $\3\dg$ categories, the reader can refer to the survey by B.\ Keller, ``On differential graded categories''  \cite{Keller2006OnDG}.

\begin{defn}
 The $\C$-linear category $\mathcal{A}$ is called a $\3\dg$ category if
 $\Mor(\bullet,\bullet)$ are differential $\mathbb{Z}$-graded $\k$-vector spaces. For every objects $\E$, $\F$, $\G$ $\in$ $\mathcal{A}$, the compositions
 $$\Mor(\F,\E)\otimes \Mor(\G,\F)\rightarrow \Mor(\G,\E) $$
 of complexes are associative.
 Furthermore, there is a unit $\k \rightarrow \Mor(\E,\E)$. Note
that the composition law implies that $\Mor(\E,\E)$ is a differential graded algebra.
\end{defn}

\begin{eg}
A basic example of $\3\dg$ categories is $\4\dg$, whose objects are complexes of $\k$-~vector space. The morphism spaces
are refined as follows$\colon$
\par
Let $\E,\F\in \mathsf{C}_{\3\dg}(\k)$, define degree $\n$ piece of the morphism $\Mor(\E,\F)$ to be $\Mor(\E,\F)(\n):=\Pi \Hom(\E_{\i},\F_{\i+\n})$.
The $\n^{\text{th}}$ differential is given by $\mathsf{d}_{\n}(\f)= \mathsf{d}_{\E}\circ \f - (-1)^{\n}\f\circ \mathsf{d}_{\F}$, $\f\in \Mor(\E,\F)(\n)$.
\end{eg}

\begin{defn}
 We call $\F\colon \c\longrightarrow \d$ a dg functor between $\3\dg$ categories if $\F\colon \Hom(\E,\G)\longrightarrow \Hom(\F(\E),\F(\G))$ is in $\kC$ (morphisms are morphism of chain complexes), $\E$, $\G \in \c$. We call $\F$ to be quasi-equivalent if $\F$ induces isomorphisms on homologies of morphisms and equivalence on their homotopic categories.
\end{defn}

\begin{defn}
The $\3\dg$ functor $\F\colon\A\longrightarrow \B$ is derived Morita equivalent if it induces an equivalence of derived categories by composition
$$\F^{\ast}\colon\mathsf{D}(\B)\cong \mathsf{D}(\A).$$
Note that if $\3\dg$ functor $\A \longrightarrow \B$ is a quasi-equivalence, then it is derived Morita equivalent, the reader can refer to ``Categorical resolutions of irrational singularities''\cite[Proposition 3.9]{Kuznetsov2015CategoricalRO} for an explicit proof.
\end{defn}

We consider the category of small $\3\dg$ categories, whose morphisms are the $\3\dg$ functors. It is written as $\dgcat$. According to G.\ Tabuada \cite{10.1155/IMRN.2005.3309}, there is a model structure on $\dgcat$ with derived Morita equivalent $\3\dg$ functors as weak equivalences. We write $\Hmo(\dgcat)$ as the associated homotopy category for such model structure. Given two $\3\dg$ categories $\A$ and $\B$, we have a bijection $\Hom_{\Hmo}(\A,\B)\cong \mathsf{Iso}\ \rep(\A^{op}\otimes^{\L} \B)$, where $\rep(\A^{op}\otimes^{\L} \B)$ is the subcategory of $\mathsf{D}(\A\otimes^{\L}\B)$ with bi-module $\X$ such that $\X(\A,\bullet)$ is a perfect $\B$ module. Linearizing the category, we obtain $\Hmo_{0}$ whose morphism spaces become $\K_{0}(\rep(\A^{op}\otimes \B))$. After $\Q$ linearization and idempotent completion, we get the category of  pre-noncommutative motive $\PNChow_{\Q}$.

\begin{defn}\label{additiveinvariant}
Any functor to an additive category $\c$, $\F\colon \dgcat \longrightarrow \c$, is called an additive invariant in the sense of G.\ Tabuada \cite{10.1155/IMRN.2005.3309} if $\colon$\\
(1) It maps the Morita equivalences to isomorphisms.\\
(2) For pre-triangulated $\3\dg$ categories $\A$, $\B$ and $\X$ with natural morphism $\i\colon\A\longrightarrow \X$ and $\j\colon\B\longrightarrow \X$ which induces semi-orthogonal decomposition of triangulated categories $\Ho(\X)=\langle \Ho(\A),\Ho(\B)\rangle$, there is an isomorphism $\F(\X)\cong \F(\A)\oplus \F(\B)$ which is induced by $\F(\i)+\F(\j)$.
\end{defn}

The following theorem is due to G.\ Tabuada.

\begin{thm}(G.\ Tabuada\cite[Theorem 4.1]{10.1155/IMRN.2005.3309}\label{theorem 1.21})
 The functor $\F$ in Definition \ref{additiveinvariant} that induces $\Hmo\longrightarrow \A$ is an additive invariant if and only if it factors through $\Hmo\longrightarrow \Hmo_{0}\longrightarrow \A$. That is, $\Hmo_{0}$ plays a role as the usual motives, and the additive invariants should be regarded as noncommutative Weil cohomology theories.
\end{thm}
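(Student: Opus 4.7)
The plan is to establish both directions of the universal property that $\Hmo_{0}$ is the target of the universal additive invariant on $\dgcat$.

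For the easier direction, assume $\F$ factors as $\dgcat \to \Hmo \to \Hmo_{0} \to \c$. Morita invariance is automatic. Given a pre-triangulated $\3\dg$ category $\X$ with a $\SOD$ $\X = \langle \A, \B \rangle$ and inclusions $\i\colon \A \to \X$, $\j\colon \B \to \X$, the admissibility of $\A$ and $\B$ provides right adjoint projection bimodules, so the identity $(\X,\X)$-bimodule admits a two-step filtration whose associated graded pieces are the bimodules of the projectors onto $\A$ and $\B$. Taking classes in $\K_{0}(\rep(\X^{op} \otimes^{\L} \X))$ yields $[\mathrm{id}_{\X}] = [\i \circ \i^{\ast}] + [\j \circ \j^{\ast}]$, which exhibits $\X \cong \A \oplus \B$ in $\Hmo_{0}$. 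Since $\c$ is additive and $\F$ factors through $\Hmo_{0}$, this gives $\F(\X) \cong \F(\A) \oplus \F(\B)$ realized by $\F(\i) + \F(\j)$.

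For the harder direction, suppose $\F\colon \dgcat \to \c$ is an additive invariant. Morita invariance yields a unique factorization $\bar{\F}\colon \Hmo \to \c$. The task reduces to showing that $\bar{\F}$ descends further to $\Hmo_{0}$: since the two categories share the same objects and $\Hmo_{0}$ differs only by linearizing morphism sets into the Grothendieck groups $\K_{0}(\rep(\A^{op} \otimes^{\L} \B))$, what must be checked is that for any distinguished triangle $M_{1} \to M \to M_{2}$ of such bimodules, the induced morphisms satisfy $\bar{\F}(M) = \bar{\F}(M_{1}) + \bar{\F}(M_{2})$ in the abelian group $\Hom_{\c}(\bar{\F}(\A), \bar{\F}(\B))$.

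The main construction is the upper-triangular $\3\dg$ category $\mathsf{T}(M)$ attached to an $(\A, \B)$-bimodule $M$: its objects are $\mathrm{Obj}(\A) \sqcup \mathrm{Obj}(\B)$, with $\A$ and $\B$ realized as full $\3\dg$ subcategories; morphisms from $a \in \A$ to $b \in \B$ are given by $M(a,b)$; morphisms from $\B$ to $\A$ vanish. After passing to perfect modules, $\mathsf{T}(M)$ admits a natural $\SOD$ of the form $\langle \A, \B \rangle$ whose gluing bimodule is precisely $M$. Hence $\F(\mathsf{T}(M)) \cong \F(\A) \oplus \F(\B)$, and tracing through the splitting identifies $\bar{\F}(M)$ with the off-diagonal component of the isomorphism produced by $\bar{\F}$ applied to the inclusion and projection $\3\dg$ functors of the $\SOD$.

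The main obstacle is upgrading this identification to additivity on triangles. Given a triangle $M_{1} \to M \to M_{2}$, I would build a refined $\SOD$ of $\mathsf{T}(M)$ whose two outer pieces recover $\mathsf{T}(M_{1})$ and $\mathsf{T}(M_{2})$ up to Morita equivalence, with the comparison morphism realized by a homotopy pushout in $\dgcat$. Applying $\F$ and comparing the two resulting off-diagonal morphisms between $\bar{\F}(\A)$ and $\bar{\F}(\B)$ via the additivity axiom yields the desired equality $\bar{\F}(M) = \bar{\F}(M_{1}) + \bar{\F}(M_{2})$. Executing this step rigorously requires careful handling of cofibrant replacements so that $\rep$-level constructions commute with $\F$ up to canonical isomorphism, and this bookkeeping is the technical core of Tabuada's argument in \cite{10.1155/IMRN.2005.3309}.
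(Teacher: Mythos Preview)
The paper does not contain a proof of this theorem: it is stated with attribution to G.\ Tabuada \cite[Theorem 4.1]{10.1155/IMRN.2005.3309} and no proof environment follows; the next item is a remark. So there is nothing in the present paper to compare your proposal against.

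That said, your outline is a reasonable sketch of the argument one finds in Tabuada's original paper: the upper-triangular $\3\dg$ category $\mathsf{T}(M)$ is precisely the device used there to translate additivity on $\SOD$s into additivity on triangles of bimodules, and the identification of the off-diagonal component with $\bar{\F}(M)$ is the key step. Your description of the ``main obstacle'' is honest: the rigorous passage from a triangle $M_{1}\to M\to M_{2}$ to the equality $\bar{\F}(M)=\bar{\F}(M_{1})+\bar{\F}(M_{2})$ is exactly where the work lies, and you have not actually carried it out, only indicated where it sits. If this were a theorem the paper proved, that would be a gap; since the paper simply cites the result, your sketch is more than what the paper provides.
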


\begin{rem}
  Due to many people's works, see a survey \cite{NM}, the Hochschild homology, algebraic $\K$-theory, (periodic) cyclic homology theory are all additive invariants. The Hochschild homology of proper smooth variety is the noncommutative counterpart of Hodge cohomology, and periodic cyclic homology corresponds to the de Rham cohomology.
\end{rem}

Given a proper smooth variety $\X$, there is a natural $\3\dg$ enhancement $\Per_{\3\dg}(\X)$, which is a $\3\dg$ enhancement of $\Perf(\X)$. In this sense, the $\3\dg$ categories can be regarded as noncommutative counterpart of varieties. In order to focus on the nice spaces, for example, the $\Chow$ motive concerns the proper smooth varieties, we restrict the $\dgcat$ to the smooth proper $\3\dg$ categories.

\begin{defn}
 $\mathsf{Dg}$ category $\A$ is called smooth if $\A$ is perfect $\A-\A$ bi-module. It is called smooth and proper if $\A$ is derived Morita equivalent to a smooth $\3\dg$ algebra of finite type.
\end{defn}

It is well known that the property of $\3\dg$ categories being smooth and proper is closed under derived Morita equivalence and tensor product \cite[Chapter 1, Theorem 1.43]{NM}.  People also define the properness as $\Hom_{\A}(\bullet,\bullet)$ being perfect $\k-\mod$. According to a book of G.\ Tabuada, ``Noncommutative motive'' \cite[Proposition 1.45]{NM}, such a definition of smooth and properness is equivalent to our definition.

\begin{defn}[Noncommutative $\Chow$ motive]\label{NMotives}
\cite{NM} We write $\Hmo^{\mathsf{sp}}_{0}$ as a full sub-category of $\Hmo_{0}$ whose objects are smooth proper $\3\dg$ categories. $\Q$ linearizing the category
$\Hmo^{\mathsf{sp}}_{0}$, that is, the morphisms become $\K_{0}(\A^{\mathsf{op}}\otimes \B)_{\Q}$\cite[Cor 1.44]{NM}, we obtain $\Hmo^{\mathsf{sp}}_{0,\Q}$. Then, we define $\NChow_{\Q}$ to be idempotent completion of $\Hmo^{\mathsf{sp}}_{0,\Q}$.
\end{defn}

There is a universal additive invariant$\colon$
$$\U\colon \operatorname{\dgcat}^{\mathsf{sp}}\longrightarrow \operatorname{\NChow}.$$
Let $\underline{\C}$ be the category with one object whose morphism space is $\C$. Then for any $\A\in \operatorname{\dgcat}$, $\Hom_{\NChow} (\U(\C),\U(\A))\cong \K_{0}(\rep(\A))\cong \K_{0}(\A):=\K_{0}(\mathsf{D}^{c}(\A))$. Since we have a functorial morphism $\Hom_{\NChow} (\U(\C),\U(\A))\longrightarrow \Hom_{\C}(\HH_{0}(\C),\HH_{0}(\A))$, there is a Chern character map $$\Ch\colon \K_{0}(\A)\longrightarrow \HH_{0}(\A).$$
Given any $\A$ module $X\in \mathsf{D}^{c}(\A)$ , it is defined via the following diagram of $\3\dg$ categories.
$$\xymatrix{&\Per_{\3\dg}(\A)\\
\underline{\C}\ar[ru]^{X}&\A\ar[u]}$$
It induces morphisms of Hochschild complexes naturally, and then an element in $\HH_{0}(\A)$ via isomorphism $\HH_{0}(\A)\cong \HH_{0}(\Per_{\3\dg}(\A))$. The isomorphism is because the Yoneda embedding $\A\longrightarrow \Per_{\3\dg}(\A)$ is a derived Morita equivalence. $\Per_{\3\dg}(\A)$ is defined as a full subcategory of $\3\dg$ $\A$ module whose objects are isomorphic to objects in $\Perf(\A)$.

\par

In general, given any additive invariant $\F$ with $\F(\k)\cong \k$, we have a Chern character map $\K_{0}(\A)\longrightarrow \F(\A)$. For example, the (periodic) cyclic homology, and the negatiave cyclic homology.
\par
It is natural to ask what are the relations between $\Chow$ motive $\Chow_{\Q}$ and noncommutative $\Chow$ motive $\NChow_{\Q}$. There is a nice answer due to remarkable works of Kontsevich and G.~Tabuada.

\begin{thm}\label{ChowNChow} (\cite[Theorem 1.1]{tabuada2011chow})
 There is a symmetric monoidal functor
 $$\phi \colon \SmProj^{\mathsf{op}}\longrightarrow \dgcat^{\mathsf{op}},\ \X\mapsto \Per_{\3\dg}(\X)$$
 such that the natural diagram is commutative.
 $$\xymatrix{\SmProj^{\mathsf{op}}\ar[r]^{\phi}\ar[d]&\dgcat^{\mathsf{sp}}\ar[d]\\
             \operatorname{\Chow}_{\Q}\ar[d]&\Hmo^{\mathsf{sp}}_{0}\ar[d]\\
             \operatorname{\Chow}_{\Q}/-\otimes\Q(1)\ar[r]^{\phi'}& \operatorname{\NChow}_{\Q}\subset \Hmo^{\ast}_{0,\Q}}$$
\end{thm}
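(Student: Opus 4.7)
The plan is to construct $\phi$ separately on objects and morphisms, and then to identify the Tate twist as trivial on the noncommutative side, so that $\phi$ descends through the quotient by $-\otimes\Q(1)$. On objects, set $\phi(\X):=\Per_{\mathsf{dg}}(\X)$; for $\X$ smooth projective this is a smooth proper $\mathsf{dg}$ category, and symmetric monoidality reduces to the Morita equivalence $\Per_{\mathsf{dg}}(\X\times\Y)\simeq \Per_{\mathsf{dg}}(\X)\otimes^{\L}\Per_{\mathsf{dg}}(\Y)$ which follows from the external tensor product of perfect complexes on a product of smooth projective varieties. On morphisms, a correspondence $\alpha\in\CH^{\dim\X}(\X\times\Y)_{\Q}=\Hom_{\Chow_{\Q}}(\X,\Y)$ has to be sent to a bimodule class in $\K_0(\Per_{\mathsf{dg}}(\X)^{\mathsf{op}}\otimes\Per_{\mathsf{dg}}(\Y))_{\Q}\cong \K_0(\X\times\Y)_{\Q}$; I would define this by lifting $\alpha$ through the Mukai-vector isomorphism of Proposition \ref{corollary 2.1}, i.e.\ by the rational GRR isomorphism pre-composed with multiplication by $1/\sqrt{\Td(\X\times\Y)}$, so that the lift is functorial.

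The crucial step is showing $\phi$ factors through $\Chow_{\Q}/(-\otimes\Q(1))$. For this, I would use Beilinson's semi-orthogonal decomposition $\Perf(\mathbb{P}^1)=\langle \O,\O(1)\rangle$, which by additivity of the universal invariant $\U$ yields $\U(\Per_{\mathsf{dg}}(\mathbb{P}^1))_{\Q}\cong \U(\k)_{\Q}\oplus \U(\k)_{\Q}$. Classically $h(\mathbb{P}^1)=\Q\oplus \Q(-1)$ in $\Chow_{\Q}$, and the unit summand $\Q$ is obviously sent to $\U(\k)_{\Q}$, so by the direct-sum comparison the Tate summand $\Q(-1)$ must also be sent to $\U(\k)_{\Q}$. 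Naturality in $\X$ then forces $\phi(\X\otimes\Q(1))\cong \phi(\X)$ for every $\X$, so $\phi$ descends to the claimed quotient; the image lies in the idempotent-complete subcategory $\NChow_{\Q}\subset \Hmo^{\ast}_{0,\Q}$ because $\Per_{\mathsf{dg}}(\X)$ is smooth and proper.

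The hard part will be the morphism-level compatibility in the first step. Composition in $\Chow_{\Q}$ uses intersection product and proper pushforward of cycles, whereas composition in $\Hmo_0^{\mathsf{sp}}$ uses derived tensor product of bimodules and $\R\pi_{\ast}$ of complexes. Matching them is precisely Grothendieck--Riemann--Roch (Theorem \ref{GRR}) applied to $\X\times\Y\times\z\to \X\times\z$: the Todd-class corrections appearing in GRR cancel cleanly exactly because of the $\sqrt{\Td}$ normalization in the Mukai vector, which is why the naive Chern character fails to be functorial between the two categories while the Mukai-vector normalization succeeds. Once this Todd-twisted composition law is verified, the commutativity of the bottom square of the diagram, as well as the factorization through $\Chow_{\Q}/(-\otimes\Q(1))$, reduce to formal checks.
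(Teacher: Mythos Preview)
The paper does not give its own proof of this theorem: it is quoted verbatim as \cite[Theorem 1.1]{tabuada2011chow} and used as background, with no argument supplied. So there is nothing in the paper to compare your proposal against.

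That said, your sketch is essentially the standard route (and indeed the one Tabuada takes): define $\phi$ on objects by $\X\mapsto\Per_{\3\dg}(\X)$, use the Morita equivalence $\Per_{\3\dg}(\X\times\Y)\simeq\Per_{\3\dg}(\X)\otimes^{\L}\Per_{\3\dg}(\Y)$ for monoidality, lift correspondences to $\K_0(\X\times\Y)_{\Q}$ via the $\sqrt{\Td}$-twisted Chern character so that GRR guarantees functoriality of composition, and kill the Tate twist using the Beilinson decomposition of $\mathbb{P}^1$. One point to be careful with: your phrasing ``the rational GRR isomorphism'' for $\CH^{\dim\X}(\X\times\Y)_{\Q}\to\K_0(\X\times\Y)_{\Q}$ is slightly loose, since $\CH^{\dim\X}$ is a single graded piece while $\K_0$ is the full group; what you really want is the inverse of $\Ch(-)\cdot\sqrt{\Td}$ on all of $\CH^*(\X\times\Y)_{\Q}$ restricted to the relevant degree, and then the check that composition of correspondences in a fixed codimension matches derived tensor of bimodules. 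This is exactly the GRR computation you allude to, but it deserves to be stated as a map on all of $\CH^*_{\Q}$ rather than just the $\dim\X$ piece.
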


With this commutative diagram, G.\ Tabuada was able to generalize some
famous conjectures to the noncommutative spaces, see ``Noncommutative counterparts of celebrated conjectures''~\cite{tabuada2019noncommutative}.

\vspace{5mm}

\section{Hodge conjecture and geometric semi-orthogonal decompositions }
In this section, we prove that the Hodge conjecture is additive for the geometric semi-orthogonal decompositions. In particular, the Hodge conjecture is a derived invariant.

\par

\begin{thm}\label{GSODHodge1}
 Suppose we have a nontrivial semi-orthogonal decomposition of derived categories $\b\X=\langle \A,\B \rangle$ such that $\A$ and $\B$ are geometric, that is, $\B\cong \1\b\Y$ and $\A\cong \2\b\Z$ for some varieties $\Y$ and $\z$. Then Hodge conjecture is true for $\X$ if and only if it is true for $\Y$ and $\z$.
\end{thm}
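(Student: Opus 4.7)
The plan is to package the geometric $\SOD$ as a pair of Fourier--Mukai embeddings, transport their kernels to cohomology via Mukai vectors, and match the images of the Chern characters on both sides using additivity of Hochschild homology and of algebraic $\K$-theory. First I would invoke Orlov's representability theorem to produce Fourier--Mukai kernels $\P\in\mathsf{D}^{\mathsf{b}}(\Y\times\X)$ and $\mathcal{Q}\in\mathsf{D}^{\mathsf{b}}(\z\times\X)$ realising the two fully faithful embeddings, and then use uniqueness of dg enhancements of $\b\X$ (Lunts--Orlov) to upgrade these to quasi-fully-faithful dg functors $\Per_{\3\dg}(\Y)\to\Per_{\3\dg}(\X)$ and $\Per_{\3\dg}(\z)\to\Per_{\3\dg}(\X)$ that realise the given $\SOD$ at the dg level.

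Since $\HH_{0}$ and $\K_{0}$ are additive invariants (Theorem~\ref{theorem 1.21} and the subsequent remark), the pair of dg functors induces isomorphisms
$$\HH_{0}(\Per_{\3\dg}(\Y))\oplus \HH_{0}(\Per_{\3\dg}(\z))\xrightarrow{\ \sim\ }\HH_{0}(\Per_{\3\dg}(\X)),\qquad \K_{0}(\Y)_{\Q}\oplus \K_{0}(\z)_{\Q}\xrightarrow{\ \sim\ }\K_{0}(\X)_{\Q}.$$
Via the $\mathsf{HKR}$ identification $\HH_{0}(\Per_{\3\dg}(-))\cong \bigoplus_{\mathsf{p}}\H^{\mathsf{p},\mathsf{p}}(-,\C)$ the first becomes an isomorphism of complex Hodge cohomology, and \Caldararu's theorem identifies its summand maps with the cohomological Fourier--Mukai transforms $\Phi^{\H}_{\v(\P)}(-)=\mathsf{p}_{\X\ast}(\mathsf{p}_{\Y}^{\ast}(-)\cdot\v(\P))$ and $\Phi^{\H}_{\v(\mathcal{Q})}$. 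Because $\v(\P)\in\bigoplus_{\mathsf{p}}\H^{\mathsf{p},\mathsf{p}}(\Y\times\X,\Q)$ is a rational Hodge class on the product (by Grothendieck--Riemann--Roch, Theorem~\ref{GRR}), and likewise for $\v(\mathcal{Q})$, these transforms are $\Q$-linear and send rational $(\mathsf{p},\mathsf{p})$-classes to rational $(\mathsf{p},\mathsf{p})$-classes. Their direct sum
$$\Psi\colon \bigoplus_{\mathsf{p}}\H^{\mathsf{p},\mathsf{p}}(\Y,\Q)\oplus\bigoplus_{\mathsf{p}}\H^{\mathsf{p},\mathsf{p}}(\z,\Q)\longrightarrow \bigoplus_{\mathsf{p}}\H^{\mathsf{p},\mathsf{p}}(\X,\Q)$$
is then a $\Q$-linear map which becomes an isomorphism after $\otimes\C$; hence $\Psi$ is itself an isomorphism restricting to a bijection between rational Hodge classes.

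To conclude I would exploit the functoriality of the Mukai vector, $\v_{\X}\circ \Phi_{\P\ast}=\Phi^{\H}_{\v(\P)}\circ \v_{\Y}$ and similarly for $\mathcal{Q}$, which together with the $\K$-theoretic decomposition shows that $\v_{\X}(\K_{0}(\X)_{\Q})$ equals $\Psi\bigl(\v_{\Y}(\K_{0}(\Y)_{\Q})\oplus \v_{\z}(\K_{0}(\z)_{\Q})\bigr)$, while the rational Hodge classes of $\X$ correspond under $\Psi$ to the direct sum of those of $\Y$ and $\z$. By Proposition~\ref{corollary 2.1} the Hodge conjecture for $\X$ is the assertion that the former subspace equals the latter; via $\Psi$ this becomes componentwise equality, which is exactly the Hodge conjecture for $\Y$ and for $\z$.

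The main technical point I anticipate is verifying that $\Psi$ is genuinely an isomorphism of rational Hodge structures, not merely a $\C$-linear comparison. Rationality comes from Grothendieck--Riemann--Roch applied to the kernels, so that $\v(\P)$ and $\v(\mathcal{Q})$ lie in rational Hodge cohomology of the products; preservation of the $(\mathsf{p},\mathsf{p})$-grading follows because the Mukai vectors are pure of Hodge type; and bijectivity on $\Q$-points (not just after $\otimes\C$) is automatic for a $\Q$-linear map between finite-dimensional $\Q$-vector spaces. A secondary issue is the dg lift of the $\SOD$, which is where uniqueness of dg enhancements of $\b\X$ must be invoked to guarantee that the additivity of $\HH_{0}$ and $\K_{0}$ applies with the Fourier--Mukai functors serving as the structure maps.
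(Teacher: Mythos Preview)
Your approach differs substantially from the paper's. The paper works entirely at the level of cohomological Fourier--Mukai transforms and never invokes dg enhancements, Lunts--Orlov uniqueness, or \Caldararu's compatibility between $\mathsf{HKR}$ and functorial Hochschild maps. Instead it introduces both embeddings \emph{and their adjoints} $\R,\L$ as Fourier--Mukai functors, proves the elementary Lemma~\ref{lem 2.2} (a trivial Fourier--Mukai functor has trivial kernel, hence trivial Mukai vector) to obtain $\L_{\H}\circ\i_{\H}=0$, and combines this with the dimension identity $\dim\HH_{0}(\X)=\dim\HH_{0}(\Y)+\dim\HH_{0}(\z)$ to conclude that $\i_{\H}+\j_{\H}$ is an isomorphism. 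The two implications are then checked by hand using $\R_{\H}\circ\i_{\H}=\mathsf{id}$ and $\L_{\H}\circ\j_{\H}=\mathsf{id}$. Your route via dg additivity plus \Caldararu\ is more conceptual and is essentially the strategy the paper adopts later in Theorem~\ref{SODHodge} for arbitrary $\SOD$s; what it buys is a uniform argument, at the cost of importing the $\mathsf{HKR}$/Mukai-vector compatibility as a black box, which the direct proof of Theorem~\ref{GSODHodge1} was designed to avoid.

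There is also a genuine gap in your argument for bijectivity of $\Psi$ on rational Hodge classes. You claim that
\[
\Psi\colon\bigoplus_{\p}\H^{\p,\p}(\Y,\Q)\oplus\bigoplus_{\p}\H^{\p,\p}(\z,\Q)\longrightarrow\bigoplus_{\p}\H^{\p,\p}(\X,\Q)
\]
``becomes an isomorphism after $\otimes\,\C$'' and then invoke the principle that a $\Q$-linear map which is an isomorphism after base change is already an isomorphism. But $\bigl(\bigoplus_{\p}\H^{\p,\p}(\Y,\Q)\bigr)\otimes\C$ is in general a \emph{proper} subspace of $\bigoplus_{\p}\H^{\p,\p}(\Y,\C)$, so the base-changed map is not the complex isomorphism you obtained from additivity of $\HH_{0}$; the descent argument does not apply. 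The repair is to run the comparison on all of $\H^{\mathsf{even}}(-,\Q)$, where tensoring with $\C$ genuinely recovers $\H^{\mathsf{even}}(-,\C)$: additivity of each $\HH_{\i}$ then shows the total map is a $\Q$-isomorphism whose $\C$-extension and its inverse preserve Hochschild degree, so the restriction to rational Hodge classes is bijective. Alternatively, and closer to the paper, use the adjoint kernels: $\R_{\H}\circ\i_{\H}=\mathsf{id}$ and $\L_{\H}\circ\j_{\H}=\mathsf{id}$ give explicit one-sided inverses on rational Hodge classes, and Lemma~\ref{lem 2.2} supplies the orthogonality $\L_{\H}\circ\i_{\H}=0$ needed to finish.
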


\begin{proof}[Proof]
Let's assume $\j\colon \2\b\Z\hookrightarrow\b\X$ to be an embedding with left adjoint $\L$, $\i\colon \1\b\Y\hookrightarrow \b\X$ with right adjoint $\R$. According to D.\ Orlov \cite[Theorem 2.2]{1996alg.geom..6006O}, they are all Fourier-Mukai functors. There is a diagram of triangulated categories$\colon$
$$\xymatrix{\1\b\Y \ar[r]_{\i} &\b\X \ar[r]_{\L}\ar@/_/[l]_{\R}&\2\b\Z\ar@/_/[l]_{\j}
}$$
with $\R\circ \i\cong \mathsf{id}$, $\L\circ \j\cong \mathsf{id}$, $\R\circ \j\cong 0$ and $\L\circ \i\cong 0$. Apply $0^{\mathsf{th}}$ $\K$-theory and $0^{\mathsf{th}}$ Hochschild homology theory, there are diagrams
$$\xymatrix{\K_{0}(\1\b\Y)\ar[r]_{\i} &\K_{0}(\b\X)\ar[r]_{\L}\ar@/_/[l]_{\R}&\K_{0}(\2\b\Z)\ar@/_/[l]_{\j}
}.$$
$$\xymatrix{\HH_{0}(\Y)\ar[r]_{\i_{\H}} &\HH_{0}(\X)\ar[r]_{\L_{\H}}\ar@/_/[l]_{\R_{\H}}&\HH_{0}(\z)\ar@/_/[l]_{\j_{\H}}
}.$$
Here we define $\HH_{0}(\bullet)$ as a subspace of de Rham cohomology. For example, $\HH_{0}(\X):=\oplus_{\p}\H^{\p,\p}(\X)\hookrightarrow \H^{\mathsf{even}}_{\mathsf{DR}}(\X)$. The morphisms of $\HH_{0}$ are induced by the Mukai vector of the corresponding kernel of functors. For example, take $\E\in \mathsf{D}^{\mathsf{b}}(\X\times\Y)$, then
$$\Phi_{\v(\E)}: \HH_{0}(\X)\longrightarrow \HH_{0}(\Y)$$
is defined as $\q_{\ast}(\p^{\ast}(\bullet)\cup \v(\E))$. Firstly, $\Phi_{\v(\E)}$ should induce morphism of de Rham cohomology, it is easy to prove that
$\Phi_{\v(\E)}$ maps $\HH_{\ast}(\X)=\oplus_{\p-\q=\ast}\H^{\p,\q}(\X)$ to $\HH_{\ast}(\Y)=\oplus_{\p-\q=\ast}\H^{\p,\q}(\Y)$. The reader can also see proof in \cite[Proposition 5.39]{bookHuybrechts}.

$$\xymatrix{&\X\times\Y\ar[dl]_{\p}\ar[dr]^{\q}&\\
\X&&\Y}$$
The morphisms of $\K_{0}$ groups are induced by the Fourier-Mukai functor.
According to \cite[Chapter 5, Section 5.2]{bookHuybrechts}, the Mukai vector $\v$ is compatible with morphism of $\K_{0}$-theory, namely, we have a diagram
  $$\xymatrix{\K_{0}(\Y)\ar[r]_{\i}\ar[d]_{\v_{\Y}}&\K_{0}(\X)\ar[r]_{\L}\ar@/_/[l]_{\R}\ar[d]_{\v_{\X}}&\K_{0}(\z)\ar[d]_{\v_{\z}}\ar@/_/[l]_{\j}\\
\HH_{0}(\Y)\ar[r]_{\i_{\H}}&\HH_{0}(\X)\ar[r]_{\L_{\H}}\ar@/_/[l]_{\R_{\H}}&\HH_{0}(\z)\ar@/_/[l]_{\j_{\H}}}$$
The morphisms $\R_{\H}$, $\i_{\H}$, $\j_{\H}$, and $\L_{\H}$  preserve rational classes. We first prove that $\i_{\H}+\j_{\H}$ induces an isomorphism of Hochschild homologies. Clearly $\i+\j$ is an isomorphism of $\K_{0}(\X)$ groups. Since Hochschild homology is an additive invariant, we have a non-canonical isomorphism $\HH_{0}(\X)\cong \HH_{0}(\Y)\oplus \HH_{0}(\z)$, which implies $\dim_{\C}\HH_{0}(\X)=\dim_{\C}\HH_{0}(\Y)+\dim_{\C}\HH_{0}(\z)$. This was proved by classical $\3\dg$ methods and the $\mathsf{HKR}$ isomorphism. The reader can also refer to A.\ Kuznetsov's paper ``Hochschild homology and semi-orthogonal decomposition'' \cite[Theorem 7.3(i)]{2009arXiv0904.4330K}.
\par
Since $\i_{\H}$ and $\j_{\H}$ are injective, which will be proved below, therefore $\i_{\H}+\j_{\H}$ being an isomorphism is equivalent to the fact that $\mathsf{Im}(\i_{\H})\cap \mathsf{Im}(\j_{\H})=0$. It suffices to prove that $\L_{\H}\circ \i_{\H}= 0$. If this is true, let $\alpha \in \mathsf{Im}(\i_{\H})\cap \mathsf{Im}(\j_{\H})$, then $\alpha= \i_{\H}\alpha_{\Y}=\j_{\H}\alpha_{\z}$, therefore $\L_{\H}\alpha = (\L_{\H}\circ \i_{\H})\alpha_{\Y}=(\L_{\H}\circ \j_{\H})\alpha_{\z}=\alpha_{\z}=0$, hence $\alpha=0$. In order to prove the claim $\L_{\H}\circ \i_{\H}=0$, we need the following lemma.

\begin{lem}\label{lem 2.2}
 Suppose an object $\E\in \mathsf{D}^{\mathsf{b}}(\mathsf{X}\times \mathsf{Y})$ induces a trivial Fourier--Mukai transform $\Phi_{\E} \colon \b\X\longrightarrow \1\b\Y$, then $\E\cong 0 \in \mathsf{D}^{\mathsf{b}}(\mathsf{X}\times \mathsf{Y})$.
\end{lem}

\begin{proof}[Proof of the lemma]
  Given any closed point $\mathsf{x}\in \X$, we have a natural closed embedding $\mathsf{l}_{\mathsf{x}}\colon \mathsf{x}\times \Y\hookrightarrow \X\times \Y$, and a simple calculation shows that $\Phi_{\E}(\k(\mathsf{x}))\cong \mathbb{L}\mathsf{l}_{\mathsf{x}}^{\ast}\E$ via identifying $\mathsf{x}\times \Y$ with $\Y$. Therefore, $\Phi_{\E}$ being trivial implies that $\mathbb{L}\mathsf{l}_{\mathsf{x}}^{\ast}\E$ is trivial. Since this is true for any closed points of $\X$, support of $\E$ is empty, which implies $\E\cong 0$.
\end{proof}

Back to the proof of Theorem \ref{GSODHodge1}. Since the functor $\L\circ \i\cong 0$ as Fourier--Mukai functor, by lemma above the kernel corresponding to $\L\circ \i$ is trivial. In particular, its Mukai vector is trivial, hence $\L_{\H}\circ\i_{\H}=0$.
\par
Now it is prepared enough to prove Theorem \ref{GSODHodge1}. Suppose Hodge conjecture for $\X$. Let $\alpha_{\Y}\in \oplus\H^{\mathsf{p},\mathsf{p}}(\Y,\Q)$, consider $\alpha=\i_{\H}\alpha_{\Y}\in \oplus\H^{\mathsf{p},\mathsf{p}}(\X,\Q)$. Since Hodge conjecture holds for $\X$, there exists an $\E\in \K_{0}(\X)_{\Q}$ such that $\v(\E)= \alpha$. Let $\E_{\Y}= \R(\E)$, then the image of $\v(\E_{\Y})$ and $\alpha_{\Y}$ under $\i_{\H}$ coincide. Since $\R_{\H}\circ \i_{\H}= \mathsf{id}_{\H}$, then $\i_{\H}$ is an injective morphism, therefore $\v(\E_{\Y})=\alpha_{\Y}$. This implies Hodge conjecture for $\Y$. The Hodge conjecture is true for $\z$ by the similar argument.

\par

Suppose Hodge conjecture is true for $\Y$ and $\z$, we prove that it is also true for $\X$. Let $\alpha \in \oplus \H^{\mathsf{p},\mathsf{p}}(\X,\Q)$, consider $\R_{\H}(\alpha)\in \HH_{0}(\Y)_{\Q}$ and $\L_{\H}(\alpha)\in \HH_{0}(\z)_{\Q}$. Since the Hodge conjecture is true for $\Y$ and $\z$, there exists an $\E_{\Y}\in \K_{0}(\Y)_{\Q}$ and an $\E_{\z}\in \K_{0}(\z)_{\Q}$ such that $\v(\E_{\Y})= \R_{\H}(\alpha)$, $\v(\E_{\z})=\L_{\H}(\alpha)$. Define $\alpha'=\i_{\H}\circ \R_{\H}(\alpha)+ \j_{\H}\circ \L_{\H}(\alpha)$. We prove that $\alpha'=\alpha$. Since $\i_{\H}\oplus \j_{\H}$ induces an isomorphism, there exist $\alpha_{1}\in \HH_{0}(\Y)$ and $\alpha_{2}\in \HH_{0}(\z)$ such that $\alpha= \i_{\H}(\alpha_{1})+ \j_{\H}(\alpha_{2})$. Applying morphism $\R_{\H}$, we obtain $\alpha_{1}=\R_{\H}(\alpha)$. Apply morphism  $\L_{\H}$, we obtain $\alpha_{2}=\L_{\H}(\alpha)$. Thus $\alpha = \i_{\H}\circ \R_{\H}(\alpha)+\j_{\H}\circ \L_{\H}(\alpha)$. Define $\E= \i(\E_{\Y})+\j(\E_{\z})\in \K_{0}(\X)_{\Q}$, then $\v(\E)= \v(\i(\E_{\Y}))+\v(\j(\E_{\z}))=\i_{\H}(\R_{\H}(\alpha))+\j_{\H}(\L_{\H}(\alpha))= \alpha$.
\end{proof}

\begin{rem}
  The statement of the theorem is still true if there is a semi-orthogonal decomposition of $\b\X$ that has more than two components. The proof is essentially the same.
\end{rem}

\begin{cor}\label{derivedinvariant}
 If $\b\X\cong \1\b\Y$, then Hodge conjecture of $\X$ $\Leftrightarrow$ Hodge conjecture of ~$\Y$.
\end{cor}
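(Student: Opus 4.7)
The plan is to reduce Corollary \ref{derivedinvariant} to Theorem \ref{GSODHodge1} by promoting the equivalence $\b\X\cong \1\b\Y$ to a nontrivial geometric $\SOD$ via an auxiliary disjoint union with a point.

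Concretely, I would consider the smooth projective variety $\W := \X\sqcup\{\mathsf{pt}\}$, whose derived category carries the canonical nontrivial decomposition
$$\mathsf{D}^{\mathsf{b}}(\W)\cong \langle \mathsf{D}^{\mathsf{b}}(\mathsf{pt}),\,\b\X\rangle.$$
This decomposition is geometric with the obvious pair of witness varieties $(\mathsf{pt},\X)$, and the hypothesis $\b\X\cong \1\b\Y$ provides a \emph{second} pair of witnesses $(\mathsf{pt},\Y)$ for the very same $\SOD$ of $\mathsf{D}^{\mathsf{b}}(\W)$.

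Since the rational Hodge conjecture for a point is trivially true, Theorem \ref{GSODHodge1} applied to $\W$ with the first pair of witnesses yields that the Hodge conjecture for $\W$ is equivalent to the Hodge conjecture for $\X$, and applied with the second pair yields that it is equivalent to the Hodge conjecture for $\Y$. Chaining these two equivalences gives exactly the statement of Corollary \ref{derivedinvariant}.

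The only point to verify carefully is that the conclusion of Theorem \ref{GSODHodge1} is insensitive to the choice of geometric realization of the $\SOD$ components: inspecting its proof this is transparent, since the Mukai-vector diagram is set up at the level of the triangulated $\SOD$ of $\mathsf{D}^{\mathsf{b}}(\W)$ together with the Fourier--Mukai kernels of the embedding and projection functors, so only the existence of some smooth projective variety realizing each component is actually used. If one prefers a more intrinsic route, Orlov's representability theorem encodes the equivalence $\b\X\cong \1\b\Y$ itself as a Fourier--Mukai kernel, and the same Mukai-vector machinery of Theorem \ref{GSODHodge1} then transfers rational Hodge classes between $\X$ and $\Y$ directly, without passing through the auxiliary $\W$.
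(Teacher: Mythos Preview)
Your argument is correct and in fact addresses a small gap: Theorem \ref{GSODHodge1} as \emph{stated} demands a nontrivial $\SOD$, so a bare equivalence $\b\X\cong\1\b\Y$ does not literally fall under its hypotheses. The paper gives no separate proof for the corollary, evidently intending that the Mukai-vector machinery inside the proof of Theorem \ref{GSODHodge1} applies verbatim to a single Fourier--Mukai equivalence (Orlov's theorem supplies the kernel, and $\Phi_{\v(\E)}$ is then an isomorphism $\HH_0(\X)\cong\HH_0(\Y)$ preserving rational Hodge classes and intertwining the maps $\v$ from $\K_0$). This is exactly the ``intrinsic route'' you outline in your closing paragraph, and is the shortest path.

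Your main approach, padding with a point to manufacture a genuinely nontrivial geometric $\SOD$ on $\W=\X\sqcup\{\mathsf{pt}\}$, is a different and pleasant device: it lets you invoke Theorem \ref{GSODHodge1} as a black box rather than reopening its proof. The one cosmetic wrinkle is that many authors take projective \emph{varieties} to be irreducible, so $\W$ may not technically qualify; but nothing in the proof of Theorem \ref{GSODHodge1} or in the formulation of the rational Hodge conjecture uses connectedness, so this is harmless. In short, the paper's implicit argument is the direct Mukai-vector transfer you mention last; your disjoint-union trick is a legitimate alternative that trades a minor abuse of terminology for cleaner modularity.
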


\begin{cor} \label{EC}
Suppose $\b\X$ admits a full exceptional collection, then the Hodge conjecture is true for $\X$.
\end{cor}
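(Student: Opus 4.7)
The plan is to derive this from Theorem \ref{GSODHodge1} together with the remark immediately following it, which extends the statement to semi-orthogonal decompositions with arbitrarily many geometric components. A full exceptional collection $\b\X = \langle \E_1, \ldots, \E_n \rangle$ is, by definition, a semi-orthogonal decomposition in which each piece $\langle \E_i \rangle$ is equivalent to $\mathsf{D}^{\mathsf{b}}(\mathrm{Spec}\,\C)$. Since $\mathrm{Spec}\,\C$ is a zero-dimensional smooth projective variety, each piece is geometric in the sense fixed in the notation section, so the multi-component version of Theorem \ref{GSODHodge1} applies to this decomposition.

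Next I would dispose of the base case, the Hodge conjecture for $\mathrm{Spec}\,\C$. This is essentially tautological: the only $(\p,\p)$-Hodge classes sit in bidegree $(0,0)$ and form a copy of $\Q$ generated by the Mukai vector of the structure sheaf, so the rational Chern character $\K_0(\mathrm{Spec}\,\C)_{\Q} \to \HH_0(\mathrm{Spec}\,\C)$ is surjective onto the rational Hodge classes. Combining this with the multi-component version of Theorem \ref{GSODHodge1} immediately yields the Hodge conjecture for $\X$ from its trivial validity on each of the $n$ points.

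The only subtlety worth flagging is that one should not attempt to iterate the two-component version naively: peeling off a single exceptional object $\E_1$ leaves the right orthogonal $\langle \E_2, \ldots, \E_n \rangle$, which in general need not be equivalent to the derived category of an honest variety, so the two-component statement cannot be applied one step at a time. The clean route is to invoke the multi-component extension directly, whose proof mirrors the two-component argument of Theorem \ref{GSODHodge1} verbatim, with the decomposition $\HH_0(\X) \cong \bigoplus_i \HH_0(\langle \E_i \rangle)$ and the corresponding splitting on $\K_0$ handled by the same adjoint projections and Mukai vectors of the associated kernels. Beyond this bookkeeping point, there is no genuine obstacle.
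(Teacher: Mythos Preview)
Your argument is correct and is exactly the intended one: the paper states this corollary without proof immediately after Theorem \ref{GSODHodge1} and the remark on multi-component decompositions, relying on the fact that each $\langle \E_i\rangle \simeq \mathsf{D}^{\mathsf{b}}(\mathrm{Spec}\,\C)$ is geometric and that the Hodge conjecture for a point is trivial. Your observation about why one must invoke the multi-component version directly rather than peel off exceptional objects one at a time is a valid and useful clarification not spelled out in the paper.
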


\begin{eg}
  The Grassmannians \cite{Kapranov_1985}, certain homogeneous spaces (see a brief survey in \cite[Section 1.1]{Kuznetsov_2016}), and smooth projective toric varieties\cite[Theorem 1.1]{2005math......3102K} admit full exceptional collection, hence the Hodge conjecture is true for these examples.
\end{eg}

\begin{eg}
  Let $\mathsf{M}=\mathbb{P}^{1}\times\cdots_{\n}\times \mathbb{P}^{1}$ with polarization $\mathcal{L}=\mathcal{O}(\mu_{1})^{\boxtimes}$ for a sequence of positive number
  $\mu=(\mu_{1},\cdots,\mu_{\n})$. Consider the obvious equivariant structure of group $\mathsf{PGL}_{2}$. Then the Mumford GIT quotient $\X(\mu)=\mathsf{M}//_{\mathcal{L}}\mathsf{PGL}_{2}$ admits a full exceptional collection for generic $\mu$ \cite[Section 6]{BallardFaveroKatzarkov+2019+235+303}. It is interesting that for finitely many $\mu$, $\X(\mu)$ is isomorphic to a Ball quotient by a classical result of
  Deligne and Mostow \cite{PMIHES_1986__63__5_0}.
\end{eg}

\begin{rem}
 If $\langle \E_{1},\E_{2},\cdots, \E_{\m} \rangle$ is a full exceptional collection of $\b\X$, then according to the proof in Theorem \ref{GSODHodge1},
 $\{\Ch(\E_{\i})\}_{\i=1}^{\m}$ forms a basis of $\mathsf{Hodge}(\X,\Q)$.
\end{rem}

\begin{eg}
 Let $\X$ be the projective space $\mathbb{P}^{n}$. There is a semi-orthogonal decomposition $\b\X=\langle \O,\O(1),\cdots, \O(n)\rangle$. We assume $n=3$ for simplicity. Since $\O(\i)$ is a line bundle, $c_{\j}(\O(\i))=0$, $\j\geq 2$. Write $\mathsf{H}$ as hyperplane of $\mathbb{P}^{3}$, then $\Ch(\O(\i))=1+\i\cdot \mathsf{H}+\frac{\i^{2}}{2}\cdot \mathsf{H}^{2}+\frac{\i^{3}}{6}\cdot \mathsf{H}^{3}$, $\HH_{0}(\mathbb{P}^{3})_{\Q}\cong\Q\oplus \Q \mathsf{H}\oplus \Q \mathsf{H}^{2}\oplus \Q \mathsf{H}^{3}$. The vectors $\Ch(\O)$, $\Ch(\O(1))$, $\Ch(\O(2))$, and $\Ch(\O(3))$ are linear independent which generate $\HH_{0}(\mathbb{P}^{3})_{\Q}$.
\end{eg}

\vspace{5mm}

\section{Noncommutative Hodge conjecture}
In this section, we propose the noncommutative Hodge conjecture, and prove that the noncommutative Hodge conjecture is additive for semi-orthogonal decomposition. We obtain more evidence of the Hodge conjecture via good knowledge of semi-orthogonal decomposition. Finally, we prove that the noncommutative Hodge conjecture is true for smooth proper connective $\3\dg$ algebras.

\par

\subsection{Formulation}

\begin{defn}
Let $\A$ be a small dg category. The Hodge classes of $\A$ is defined as $$\Hodge(\A):=\pi(\j^{-1}(\Ch^{\top}(\K_{0}^{\top}(\A)_{\Q})))\subset \HH_{0}(\A).$$
$$\xymatrix{&&\HH_{0}(\A)\\
\K_{0}(\A)\ar[rru]^{\Ch}\ar[d]\ar[r]_{\Ch}&\HN_{0}(\A)\ar[d]^{j}\ar[ru]_{\pi}&\\
\K^{\top}_{0}(\A)\ar[r]^{\Ch^{\top}}&\HC^{\per}_{0}(\A)&}$$
\end{defn}

\begin{conj}\label{Mainconj}
(Noncommutative Hodge conjecture)
The Chern character $\Ch: \K_{0}(\A)\mapsto \HH_{0}(\A)$ maps $K_{0}(\A)_{\Q}$ surjectively into the Hodge classes $\Hodge(\A)$.
\end{conj}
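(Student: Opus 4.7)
The statement is posed as a conjecture, and specializing to $\A=\Per_{\3\dg}(\X)$ recovers the classical rational Hodge conjecture (Theorem~\ref{NHodge}), so an unconditional proof is out of reach. My plan is therefore threefold: first, verify well-posedness, i.e.\ $\Ch(\K_{0}(\A)_{\Q})\subseteq \Hodge(\A)$, so that ``surjectively into $\Hodge(\A)$'' is meaningful; second, reduce the conjecture for $\Per_{\3\dg}(\X)$ to the classical Hodge conjecture; third, establish it unconditionally whenever the noncommutative motive of $\A$ decomposes into tractable summands, in particular for smooth proper connective $\3\dg$ algebras as advertised in Theorem~\ref{algebra1}.

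For well-posedness, I would trace the commuting diagram defining $\Hodge(\A)$. The algebraic Chern character factors through $\HN_{0}(\A)$ via the negative Chern character, so $\Ch(\alpha)\in \pi(\HN_{0}(\A))$ automatically; naturality with respect to the comparison map $\K_{0}(\A)\rightarrow \K_{0}^{\top}(\A)$ forces the image in $\HC_{0}^{\per}(\A)$ to lie in $\Ch^{\top}(\K_{0}^{\top}(\A)_{\Q})$, so $\Ch(\alpha)$ sits in $\pi(\j^{-1}(\Ch^{\top}(\K_{0}^{\top}(\A)_{\Q})))$ as required. For the reduction, when $\A=\Per_{\3\dg}(\X)$ one invokes $\mathsf{HKR}$ to identify $\HH_{0}(\A)\cong \oplus_{\p}\H^{\p,\p}(\X)$, Blanc's comparison to identify the image of $\Ch^{\top}$ with even rational Betti cohomology, and an identification of $\HC_{0}^{\per}(\A)$ with even algebraic de Rham cohomology under which $\j$ realises the Hodge filtration inclusion. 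Under these identifications $\Hodge(\A)$ becomes precisely the classical rational Hodge classes, and the conjecture collapses to Proposition~\ref{corollary 2.1}.

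For the unconditional cases, the toy model is $\A=\C$, where every term in the diagram is $\Q$ or $\C$ in the obvious way; hence $\Hodge(\C)=\Q\subset \C=\HH_{0}(\C)$ and the conjecture holds tautologically. Then the splitting $\U(\a)_{\Q}\cong \oplus \U(\C)_{\Q}$ from \cite{raedschelders2020proper} reduces smooth proper connective $\3\dg$ algebras to finitely many copies of $\C$, once additivity under noncommutative motivic direct sums (Theorem~\ref{Nmotive}) has been established --- which is itself a separate but largely formal step obtained by tracking each term of the defining diagram through the motivic decomposition.

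The genuine obstacle is the residue of the second step, namely the classical Hodge conjecture for $\X$ itself. The noncommutative formulation is not designed to bypass this; its payoff is compatibility with $\SOD$s and Azumaya twists (Theorem~\ref{sod}, Theorem~\ref{Twisted}), which produces new geometric evidence by reducing to noncommutative pieces of simpler structure, and this is where the utility of the framework --- rather than a proof of the general conjecture --- is to be found.
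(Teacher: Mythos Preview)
The statement is a conjecture, and the paper does not attempt to prove it; your recognition that an unconditional proof would entail the classical Hodge conjecture is exactly right. Your three-part program --- well-posedness of $\Ch(\K_{0}(\A)_{\Q})\subset\Hodge(\A)$, the reduction for $\Per_{\3\dg}(\X)$ to the classical statement, and the unconditional verification via motivic decomposition for smooth proper connective $\3\dg$ algebras --- matches precisely what the paper carries out in Theorems~\ref{NHodge}, \ref{SODNMotive}, and \ref{propersmoothconectivealgebraHodge}.
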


\begin{rem}
 Note that we obtain the abstract rational Hodge classes in $\HH_{0}(\A)$. Classically, the Hodge conjecture concerns the weight. However, to the author's knowledge, we don't know how to
 obtain the weight of the abstract Hodge classes. In the paper, we always assume the conjecture as a non-weighted Hodge conjecture.
\end{rem}

\begin{thm}\label{admissible}
The Conjecture \ref{Mainconj} is equivalent to the one in A.\ Perry's paper \cite[Conjecture 5.11]{perry2020integral} in the case of admissible subcategories of $\b\X$.
\end{thm}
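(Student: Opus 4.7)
The plan is to reduce the statement to a direct comparison of the two definitions of Hodge classes, using the smoothness and properness available in the admissible setting. First, I would note that for an admissible subcategory $\A \subset \b\X$ of a smooth projective variety, the induced dg enhancement $\A_{\3\dg} \subset \Per_{\3\dg}(\X)$ is smooth and proper; hence by Kaledin's degeneration theorem for the noncommutative Hodge-to-de Rham spectral sequence \cite{kaledin2016spectral}, the canonical map $\j \colon \HN_0(\A) \to \HC^{\per}_0(\A)$ is injective, with image the Hodge filtration piece $F^0 \HC^{\per}_0(\A)$, and $\HC^{\per}_0(\A) \cong \oplus_{\n} \HH_{2\n}(\A)$ as filtered vector spaces.

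Next, I would unpack Perry's formulation \cite[Conjecture 5.11]{perry2020integral}, in which the rational Hodge classes of $\A$ are obtained by intersecting $\Ch^{\top}(\K^{\top}_0(\A)_{\Q})$ with $\j(\HN_0(\A))$ inside $\HC^{\per}_0(\A)$ and then projecting to the $\HH_0(\A)$ summand, exactly as in Definition \ref{smpconjA}. By the injectivity of $\j$, this intersection is canonically identified via $\j^{-1}$ with a subset of $\HN_0(\A)$. Under the filtration whose graded pieces identify with $\HH_{2\n}(\A)$, the canonical projection $F^0 \to F^0/F^1 \cong \HH_0(\A)$ coincides with the $\mathsf{SBI}$ quotient $\pi \colon \HN_0(\A) \to \HH_0(\A)$. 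Thus Perry's Hodge classes are precisely $\pi(\j^{-1}(\Ch^{\top}(\K^{\top}_0(\A)_{\Q}))) = \Hodge(\A)$, which is the set used in Conjecture \ref{Mainconj}.

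Finally, naturality of the Chern character in the diagram of additive invariants preceding Conjecture \ref{Mainconj} guarantees that the algebraic Chern character $\K_0(\A)_{\Q} \to \HH_0(\A)$ appearing on both sides is the same map (factoring as $\pi \circ \Ch$ through $\HN_0(\A)$). Consequently the surjectivity statements in the two conjectures are word-for-word equivalent, and the theorem follows.

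The main obstacle I expect is the splitting issue highlighted in the acknowledgements: Perry's definition a priori depends on a choice of splitting of the noncommutative Hodge filtration on $\HC^{\per}_0(\A) \cong \oplus_{\n} \HH_{2\n}(\A)$. This is resolved by observing that, once restricted to classes already lying in $F^0 = \j(\HN_0(\A))$, the projection to $F^0/F^1 = \HH_0(\A)$ is canonical, independent of any choice of splitting, and equals the natural $\mathsf{SBI}$ map $\pi$. Verifying this canonicity carefully — together with the compatibility of Blanc's topological Chern character with the filtration — is the technical heart of the argument.
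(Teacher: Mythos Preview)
Your proposal is correct and follows essentially the same route as the paper's own proof: both invoke Kaledin's degeneration to get injectivity of $\j$, identify Perry's Hodge classes with $\mathsf{Pr}\circ\H$ applied to $\Ch^{\top}(\K^{\top}_0(\A)_{\Q})\cap \j(\HN_0(\A))$, and then observe that the composite $\mathsf{Pr}\circ\H$ restricted to $\j(\HN_0(\A))$ is precisely the canonical map $\pi$, so that the two definitions of Hodge classes agree. Your additional remark that the projection $F^0\to F^0/F^1\cong\HH_0(\A)$ is splitting-independent is exactly the content of the paper's assertion that ``$\mathsf{Pr}\circ\H:\HN_0(\A)\to\HH_0(\A)$ is naturally the morphism $\pi$.''
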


\begin{proof}
 For the admissible subcategories of $\b\X$, the Hodge classes are defined as the classes of $\Ch^{\top}(\K_{0}^{\top}(\A)_{\Q})$ in $\HC^{\per}_{0}(\A)$ that lie in $\HH_{0}(\A)$ under the Hodge decomposition\cite{kaledin2016spectral}. The map $\j: \HN_{0}(\A)\rightarrow \HC^{\per}_{0}(\A)$ is injective by degeneration of noncommutative Hodge-to de Rham spectral sequence. Choose a splitting of the Hodge decomposition of $\HC^{\per}_{0}(\A)$ (the one in \cite{perry2020integral}), and induce a splitting for $\HN_{0}(\A)$, we get a commutative diagram,
 $$\xymatrix{&&&\HH_{0}(\A)\ar[ddd]^{=}\\
 \K_{0}(\A)\ar[rrru]^{\Ch}\ar[r]\ar[d]&\HN_{0}(\A)\ar[rru]_{\pi}\ar[r]^{\cong}_{\H}\ar[d]^{\j}&\oplus_{i\leq 0} \HH_{2\i}(\A)\ar[d]\ar[ur]\ar[ru]_{\mathsf{Pr}}&\\
 \K^{\top}_{0}(\A)\ar[r]&\HC^{\per}_{0}(\A)\ar[r]^{\cong}_{\H}&\oplus_{\i} \HH_{2\i}(\A)\ar[rd]^{\mathsf{Pr}}&\\
 &&&\HH_{0}(\A)}$$
 Note that the projection $\mathsf{Pr}\circ\H: \HN_{0}(\A)\rightarrow \HH_{0}(\A)$ is naturally the morphism $\pi$. The Hodge classes defined in \cite{perry2020integral} is isomorphic to the image $pr\circ\H\circ(\Ch^{\top}(\K^{\top}_{0}(\A)_{\Q})\cap \j(\HN_{0}(\A)))$ in $\HH_{0}(\A)$. By the commutative diagram, it is exactly the classes $\pi(\j^{-1}(\Ch^{\top}(\K^{\top}_{0}(\A)_{\Q})))\subset \HH_{0}(\A)$.
\end{proof}

\begin{lem}
Let $\A$ be a smooth proper $\3\dg$ category, the noncommutative Hodge-to de Rham spectral sequence degenerates \cite{kaledin2016spectral}.
\end{lem}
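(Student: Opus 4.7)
The statement is a theorem of Kaledin, so my plan is really to outline how his proof goes and indicate where smoothness and properness of $\A$ enter. The noncommutative Hodge-to-de Rham spectral sequence in question is the one arising from the Hodge filtration on the negative cyclic complex of $\A$, with $E_1$-page given by Hochschild homology $\HH_\ast(\A)$ and abutment the periodic cyclic homology $\HC^{\per}_\ast(\A)$. The assertion is that it degenerates at $E_1$, equivalently, that $\j\colon \HN_0(\A)\to \HC^{\per}_0(\A)$ (and all its shifted analogues) is injective and that $\HC^{\per}_0(\A)\cong \oplus_\n \HH_{2\n}(\A)$ non-canonically.

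The approach I would take follows Kaledin's reduction-modulo-$\p$ strategy. First, I would reduce from an arbitrary smooth proper $\3\dg$ category $\A$ to the case of a smooth proper $\3\dg$ algebra, using that smoothness and properness are Morita invariant and that both $\HH_\ast$ and $\HC^{\per}_\ast$ are additive invariants in the sense of Definition \ref{additiveinvariant}. Next, I would spread out: the $\3\dg$ algebra together with its module of relations is defined over a finitely generated $\Z$-subalgebra $\mathsf{R}\subset\C$, so one obtains a smooth proper $\3\dg$ $\mathsf{R}$-algebra $\a_{\mathsf{R}}$ whose base change to $\C$ recovers $\A$, and whose Hochschild homology is finitely generated projective over $\mathsf{R}$ (this is where properness is crucially used — it gives finite-dimensionality of $\HH_\ast$ and compatibility of its formation with base change). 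Then for a closed point of $\mathsf{Spec}(\mathsf{R})$ of large residue characteristic $\p$, one reduces to a smooth proper $\3\dg$ algebra over a perfect field $\k$ of characteristic $\p$.

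The heart of the argument is then Kaledin's noncommutative Cartier isomorphism: for a smooth $\3\dg$ algebra over $\k$ of characteristic $\p$, there is a canonical isomorphism between the Hochschild homology of the Frobenius twist and the cohomology of a certain mod-$\p$ truncation of the cyclic complex, generalising the classical Cartier isomorphism $\Omega^\ast_{\X^{(\p)}}\cong \H^\ast(\Omega^\bullet_{\X})$. Combined with properness (which ensures the relevant spectral sequence has finite-dimensional terms and that one can count dimensions), this Cartier isomorphism forces degeneration of the conjugate spectral sequence in characteristic $\p$, and then an auxiliary comparison between the conjugate and Hodge filtrations forces degeneration of the Hodge-to-de Rham spectral sequence itself in characteristic $\p$. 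Finally, one lifts back to $\C$: since $\HH_\ast(\a_{\mathsf{R}})$ and $\HC^{\per}_\ast(\a_{\mathsf{R}})$ are finitely generated projective $\mathsf{R}$-modules and the spectral sequence is $\mathsf{R}$-linear, degeneration at a single closed fiber implies degeneration generically, hence over $\C$.

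The main obstacle — and the entire technical core of Kaledin's work — is the construction of the noncommutative Cartier isomorphism in positive characteristic; everything else (spreading out, dimension count, semicontinuity) is formal once that is in hand. Since all of this is carried out in \cite{kaledin2016spectral}, the proof in the text can legitimately reduce to a citation, with the only verification needed being that the hypotheses (smoothness and properness of $\A$) match those of Kaledin's theorem, which they do by our standing definitions.
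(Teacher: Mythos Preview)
Your proposal is correct and aligns with the paper's treatment: the paper states this lemma as a black box with the citation to Kaledin and gives no proof of its own, so a citation is all that is required. Your outline of Kaledin's reduction-mod-$\p$ strategy goes well beyond what the paper does, but is accurate and consistent with simply invoking \cite{kaledin2016spectral}.
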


\begin{defn}\label{smpconj}
(Hodge conjecture for smooth proper $\3\dg$ categories) Define the Hodge classes in $\HH_{0}(\A)$ as $pr\circ\H(\Ch^{\top}(\K^{\top}_{0}(\A)_{\Q})\cap \j(\HN_{0}(\A)))$. Then the Hodge conjecture is that the Chern character $\Ch: \K_{0}(\A)\rightarrow \HH_{0}(\A)$ maps $\K_{0}(\A)_{\Q}$ surjectively into the Hodge classes.
\end{defn}

\begin{rem}\label{remconj}
 This is equivalent to the conjecture \ref{Mainconj} by the same argument in the proof of Theorem \ref{admissible}. We formulate this version because of it is Hodge original.
\end{rem}

\par

\begin{thm}\label{NHodge}
 Let $\X$ be a smooth projective variety. Hodge conjecture for $\X$ $\Leftrightarrow$ Noncommutative Hodge conjecture for $\Per_{\3\dg}(\X)$.
\end{thm}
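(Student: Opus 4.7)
The plan is to identify every ingredient in the definition of $\Hodge(\Per_{\3\dg}(\X))$ with its classical cohomological counterpart under the Hochschild--Kostant--Rosenberg isomorphism, and then to show that the rational Hodge classes produced on the noncommutative side coincide exactly with the classical rational Hodge classes of $\X$, so that the two surjectivity statements match. First I would record the basic identifications: $\K_{0}(\Per_{\3\dg}(\X))\cong \K_{0}(\X)$, the $\mathsf{HKR}$ isomorphism $\HH_{0}(\Per_{\3\dg}(\X))\cong \oplus_{\p}\H^{\p,\p}(\X,\C)$, the identification $\HC^{\per}_{0}(\Per_{\3\dg}(\X))\cong \H^{\mathsf{even}}_{\mathsf{dR}}(\X,\C)$, and the identification $\HN_{0}(\Per_{\3\dg}(\X))\cong \oplus_{\p}\F^{\p}\H^{2\p}_{\mathsf{dR}}(\X,\C)$, where the $\F^{\p}$ denotes the classical Hodge filtration (here the degeneration of the noncommutative Hodge-to-de Rham spectral sequence specializes, for $\A=\Per_{\3\dg}(\X)$, to the classical Hodge-to-de Rham degeneration).

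Next I would identify the maps in the defining diagram: under the above identifications, $\j\colon \HN_{0}\to \HC^{\per}_{0}$ is simply the inclusion of the Hodge filtration $\oplus_{\p}\F^{\p}\H^{2\p}\hookrightarrow \oplus_{\p}\H^{2\p}$, and $\pi\colon \HN_{0}\to \HH_{0}$ is the projection $\F^{\p}\H^{2\p}\twoheadrightarrow \F^{\p}/\F^{\p+1}\cong \H^{\p,\p}(\X)$. For the topological side, I would invoke Blanc's comparison theorem to identify $\K^{\top}_{0}(\Per_{\3\dg}(\X))_{\Q}\cong \K^{\mathsf{top}}_{0}(\X(\C))_{\Q}$, and also his compatibility between the noncommutative topological Chern character $\Ch^{\top}$ and the classical topological Chern character $\K^{\mathsf{top}}_{0}(\X(\C))_{\Q}\to \H^{\mathsf{even}}(\X(\C),\Q)\hookrightarrow \H^{\mathsf{even}}_{\mathsf{dR}}(\X,\C)$. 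This gives $\Ch^{\top}(\K^{\top}_{0}(\Per_{\3\dg}(\X))_{\Q})=\H^{\mathsf{even}}(\X(\C),\Q)$ inside $\H^{\mathsf{even}}_{\mathsf{dR}}(\X,\C)$.

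Combining these, $\j^{-1}(\Ch^{\top}(\K^{\top}_{0}(\Per_{\3\dg}(\X))_{\Q}))$ is exactly $\oplus_{\p}\bigl(\F^{\p}\H^{2\p}(\X,\C)\cap \H^{2\p}(\X,\Q)\bigr)$; since complex conjugation on a rational class forces the Hodge decomposition of any element of $\F^{\p}\H^{2\p}\cap \H^{2\p}(\X,\Q)$ to be concentrated in bidegree $(\p,\p)$, applying $\pi$ identifies this subspace with the classical group of rational Hodge classes $\oplus_{\p}\bigl(\H^{\p,\p}(\X)\cap \H^{2\p}(\X,\Q)\bigr)\subset \oplus_{\p}\H^{\p,\p}(\X)$. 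Thus $\Hodge(\Per_{\3\dg}(\X))$ coincides with the classical rational Hodge classes of $\X$. Finally, the noncommutative Chern character $\Ch\colon \K_{0}(\Per_{\3\dg}(\X))_{\Q}\to \HH_{0}(\Per_{\3\dg}(\X))$ corresponds, after $\mathsf{HKR}$, to the classical Chern character $\Ch_{\Q}\colon \K_{0}(\X)_{\Q}\to \oplus_{\p}\H^{\p,\p}(\X)$ (up to the Todd factor appearing in the Mukai vector, which is irrelevant for surjectivity by Proposition \ref{corollary 2.1}). The noncommutative Hodge conjecture for $\Per_{\3\dg}(\X)$ then becomes precisely the assertion that $\Ch_{\Q}$ surjects onto the rational Hodge classes, which by Theorem \ref{GRR} and Proposition \ref{corollary 2.1} is equivalent to the classical rational Hodge conjecture for $\X$.

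The main obstacle, and the step I expect to have to argue most carefully, is the compatibility between the noncommutative and classical structures on the target: namely, that under $\mathsf{HKR}$ and its periodic/negative cyclic variants the map $\j$ really is the inclusion of the classical Hodge filtration, and that Blanc's $\Ch^{\top}$ really computes the classical topological Chern character on $\X(\C)$. Both statements are established in the literature (the former by combining the multiplicative $\mathsf{HKR}$ with Kaledin's degeneration and the comparison of filtrations, the latter by Blanc's construction), so the proof will consist mainly of citing and assembling these comparisons rather than proving them from scratch.
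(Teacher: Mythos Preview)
Your proposal is correct and follows essentially the same approach as the paper: both proofs proceed by identifying $\K_{0}$, $\HH_{0}$, $\HN_{0}$, $\HC^{\per}_{0}$, $\K^{\top}_{0}$ and the maps between them with their classical cohomological counterparts (via Keller's comparison, $\mathsf{HKR}$, Weibel, and Blanc), assembling these into a commutative diagram, and concluding that the two surjectivity statements coincide. Your write-up is in fact more explicit than the paper's somewhat terse argument---in particular, you spell out the identification of $\j$ with the inclusion of the Hodge filtration, you supply the complex-conjugation step showing $\F^{\p}\H^{2\p}\cap \H^{2\p}(\X,\Q)=\H^{\p,\p}\cap \H^{2\p}(\X,\Q)$, and you flag the Todd factor and dispose of it via Proposition~\ref{corollary 2.1}---whereas the paper essentially draws the diagram and cites the comparison references.
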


\begin{proof}
 The commutative Hodge conjecture claims that the Chern character $\Ch \colon \K_{0}(\X)_{\Q}\longrightarrow \bigoplus_{\mathsf{p}}\H^{\mathsf{p},\mathsf{p}}(\X,\C)$ maps $\K_{0}(\X)_{\Q}$ surjectively to the rational Hodge classes. The noncommutative Hodge conjecture claims that the map $\Ch_{\Q}\colon \K_{0}(\X)_{\Q}=\K_{0}(\Per_{\3\dg}(\X))_{\Q}\longrightarrow \Hodge(\Per_{\3\dg}(\X))$ is surjective.

 \par

 There is a commutative diagram$\colon$
 $$\xymatrix{\HH_{0}(\Per_{\3\dg}(\X))\ar[ddd]^{\cong}&&\K^{\top}_{0}(\Per_{\3\dg}(\X))\ar[rd]\ar@/_/[ddd]&\\
 &\K_{0}(\Per_{\3\dg}(\X))\ar[ru]\ar[lu]^{\Ch}\ar[r]^{\Ch}\ar[d]^{\cong}&\HN_{0}(\Per_{\3\dg}(\X))\ar[llu]^{\pi}\ar[r]^{\hookrightarrow}\ar[d]^{\cong}&\HC_{0}^{\per}(\Per_{\3\dg}(\X))\ar[d]^{\cong}\\
 &\K_{0}(\X)\ar[rd]\ar[ld]_{\Ch}\ar[r]^{\Ch}&\bigoplus_{\i\leq 0} \H^{\mathsf{p},\mathsf{p}-2\i}(\X,\C)\ar[lld]^{\pi}\ar[r]^{\hookrightarrow}&\H^{\mathsf{even}}_{\mathsf{dR}}(\X,\C)\\
 \oplus_{\p}\H^{\p,\p}(\X,\C)&&\K^{\top}_{0}(\X)\ar[ru]&}$$
 We explain the commutative diagram. There is a
 natural quasi isomorphism of double complexes of periodic cyclic homology $\mathsf{Tot}^{\bullet,\bullet}(\Per_{\3\dg}(\X))\rightarrow  \mathsf{Tot}^{\bullet,\bullet}(\R\Gamma(\oplus \Omega^{\i}_{\X}[\i]))$ which is described by B.\ Keller in \cite{Keller1998}.

 After identifying $\HC_{0}^{\per}(\Per_{\3\dg}(\X))$ with $\H_{\mathsf{dR}}^{\mathsf{even}}(\X,\C)$, the noncommutative Chern character becomes the usual Chern character. The reader can refer to C.\ Weibel \cite[Proposition 3.8.1]{K-theory/0046} or \cite[Proposition 4.32]{blanc_2016}.  Hence, the noncommutative Chern character maps $\K_{0}(\X)_{\Q}$ surjectively to the noncommutative rational Hodge classes if and only if the commutative Chern character maps $\K_{0}(\X)_{\Q}$ surjectively to the commutative rational Hodge classes.
\end{proof}

\begin{thm}\label{MoritaHodge}
Suppose $\F\colon \A \longrightarrow  \B$ is a derived Morita equivalence, then Hodge conjecture is true for $\A$ if and only if it is true for $\B$.
\end{thm}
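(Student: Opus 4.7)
The plan is to observe that every functor appearing in the definition of $\Hodge(-)$ --- namely $\K_{0}$, $\K_{0}^{\top}$, $\HH_{0}$, $\HN_{0}$, and $\HC^{\per}_{0}$ --- is an additive (in particular Morita) invariant of small $\3\dg$ categories, and that the maps $\Ch$, $\Ch^{\top}$, $\j$, and $\pi$ relating them are natural transformations. A derived Morita equivalence $\F\colon \A\to \B$ therefore induces a commutative ``cube'' whose two parallel faces are the defining diagrams for $\Hodge(\A)$ and $\Hodge(\B)$, with vertical isomorphisms everywhere. Once this naturality is set up, the theorem follows from a diagram chase.

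More concretely, I would first collect the Morita invariance statements: $\K_{0}$ and $\HH_{0}$ are Morita invariants by the general formalism recalled after Definition \ref{additiveinvariant}; negative and periodic cyclic homology are Morita invariants by the same token; and topological $\K$-theory $\K_{0}^{\top}$ is a Morita (indeed additive) invariant by Blanc's construction. Thus $\F$ induces isomorphisms $\K_{0}(\A)\xrightarrow{\cong}\K_{0}(\B)$, $\K_{0}^{\top}(\A)\xrightarrow{\cong}\K_{0}^{\top}(\B)$, $\HN_{0}(\A)\xrightarrow{\cong}\HN_{0}(\B)$, $\HC^{\per}_{0}(\A)\xrightarrow{\cong}\HC^{\per}_{0}(\B)$, and $\HH_{0}(\A)\xrightarrow{\cong}\HH_{0}(\B)$, and these are compatible with $\Ch$, $\Ch^{\top}$, $\j$ and $\pi$ by naturality of the Chern character (as a transformation between additive invariants).

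Second, I would transport the definition of Hodge classes through $\F$. Since $\Ch^{\top}$ and the isomorphism $\K^{\top}_{0}(\A)_{\Q}\cong \K_{0}^{\top}(\B)_{\Q}$ are compatible, the image $\Ch^{\top}(\K^{\top}_{0}(\A)_{\Q})\subset \HC^{\per}_{0}(\A)$ corresponds bijectively to $\Ch^{\top}(\K^{\top}_{0}(\B)_{\Q})\subset \HC^{\per}_{0}(\B)$; since $\j$ is natural and bijective on $\HN_{0}$, the preimages $\j^{-1}(\Ch^{\top}(\K^{\top}_{0}(-)_{\Q}))$ correspond; and since $\pi$ is natural, applying $\pi$ yields a bijection $\Hodge(\A)\xrightarrow{\cong}\Hodge(\B)$ under the isomorphism $\HH_{0}(\A)\cong \HH_{0}(\B)$. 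Simultaneously, the image of $\Ch\colon \K_{0}(\A)_{\Q}\to \HH_{0}(\A)$ matches the image of $\Ch\colon \K_{0}(\B)_{\Q}\to \HH_{0}(\B)$ under the same identification.

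Finally, the noncommutative Hodge conjecture asserts surjectivity of $\Ch\colon \K_{0}(-)_{\Q}\to \Hodge(-)$, and the previous step shows this surjectivity statement for $\A$ is literally identified with that for $\B$, proving the equivalence. The only genuinely nontrivial input is the Morita invariance (and naturality of the Chern character) for Blanc's topological $\K$-theory and the topological Chern character; this is the one point I would be most careful about citing, but it is built into the construction of $\K^{\top}$ as a functor out of $\Hmo$ and so presents no real obstacle.
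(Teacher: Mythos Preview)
Your proposal is correct and follows essentially the same approach as the paper: both arguments invoke the Morita (additive) invariance of $\K_{0}$, $\K_{0}^{\top}$, $\HH_{0}$, $\HN_{0}$, $\HC^{\per}_{0}$ together with the naturality of $\Ch$, $\Ch^{\top}$, $\j$, $\pi$, assemble these into a commutative diagram with all $\F$-induced maps being isomorphisms, and then read off the bijection $\Hodge(\A)\cong\Hodge(\B)$ compatible with the Chern characters by a diagram chase. One small wording issue: where you write ``$\j$ is natural and bijective on $\HN_{0}$'', you presumably mean that the $\F$-induced map on $\HN_{0}$ is bijective (not that $\j$ itself is), which is exactly what the paper uses; $\j$ need not be injective for general $\3\dg$ categories, but that is irrelevant to the argument since only naturality of $\j$ together with the vertical isomorphisms is required for the preimages to correspond.
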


\begin{proof}
 The topological and algebraic $\K$-theory, Hochschild homology, periodic (negative) cyclic homology are all additive invariants. We have a commutative diagram,
 $$\xymatrix{ \K^{\top}_{0}(\A)\ar[rddd]\ar[rrr]^{\cong}&&&\K^{\top}_{0}(\B)\ar[lddd]\\
&\K_{0}(\A)\ar[lddd]_{\Ch}\ar[r]^{\cong}\ar[d]\ar[lu]&\K_{0}(\B)\ar[rddd]^{\Ch}\ar[d]\ar[ru]&\\
 &\HN_{0}(\A)\ar[ldd]^{\pi}\ar[r]^{\cong}\ar[d]&\HN_{0}(\B)\ar[d]\ar[rdd]_{\pi}&\\
& \HC_{0}^{\per}(\A)\ar[r]^{\cong}&\HC_{0}^{\per}(\B)&\\
\HH_{0}(\A)\ar[rrr]^{\cong}&&&\HH_{0}(\B)}$$
 whose rows are isomorphisms.
 It is clear that any morphism of dg categories induce a morphism of Hodge classes: write $\phi$ as the corresponding morphism form additive invariants of $\A$ to $\B$. Let $x\in \Hodge(\A)$, this implies that there is $ \x1\in \HN_{0}(\A)$ such that $\pi(\x1)=\mathsf{x}$, and $\y\in \K_{0}^{\top}(\A)_{\Q}$ such that $\j(\x1)=\Ch_{\Q}^{\top}(\y)$. Apply $\phi$, we get $\phi(\mathsf{x})=\pi(\phi(\x1))$, and $\j(\phi(\x1))=\Ch_{\Q}^{\top}(\phi(\y))$, that is, $\phi(\mathsf{x})\in \Hodge(\B)$.
 There is a commutative diagram.

$$\xymatrix{\K_{0}(\A)\ar[r]^{\cong}\ar[d]^{\Ch}&\K_{0}(\B)\ar[d]^{\Ch}\\
\Hodge(\A)\ar[r]^{\cong}&\Hodge(\B)}$$
The isomorphism of Hodge classes is as follows: Take $\mathsf{z}\in \Hodge(\B)$, since $\Phi$ induces isomorphis $\HH_{0}(\A)\cong \HH_{0}(\B)$, there exist unique $\mathsf{x}\in \HH_{0}(\A)$ such that $\phi(\mathsf{x})=\mathsf{z}$. It can be shown that $\mathsf{x}\in \Hodge(\A)$ by diagram chasing.
\end{proof}

\begin{cor}\label{Uniqueenhanced}
 For the unique enhanced triangulated categories, we can define its Hodge conjecture via its smooth and proper $\3\dg$ enhancement (if it exists). The Hodge conjecture does not depend on the $\3\dg$ enhancement.
\end{cor}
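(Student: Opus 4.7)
The plan is to reduce the statement directly to Theorem \ref{MoritaHodge}. Recall that a triangulated category $\mathsf{T}$ is said to be uniquely enhanced if any two $\3\dg$ enhancements of $\mathsf{T}$ are quasi-equivalent; such quasi-equivalences are in particular derived Morita equivalences, as noted in the paper after the definition of derived Morita equivalence.

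First, I would suppose that $\mathsf{T}$ is uniquely enhanced and admits a smooth and proper $\3\dg$ enhancement $\A_{1}$. The proposed definition is that the Hodge conjecture holds for $\mathsf{T}$ precisely when Conjecture \ref{Mainconj} holds for $\A_{1}$. To check that this is unambiguous, I would take any other smooth and proper $\3\dg$ enhancement $\A_{2}$ of $\mathsf{T}$; by the uniqueness of enhancement there exists a quasi-equivalence $\F\colon \A_{1}\longrightarrow \A_{2}$, and hence a derived Morita equivalence. Note also that, as recalled in the paper, smoothness and properness are preserved under derived Morita equivalence, so the hypothesis that one enhancement be smooth and proper automatically propagates to any other enhancement equivalent to it.

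Next, I would apply Theorem \ref{MoritaHodge} to the derived Morita equivalence $\F\colon\A_{1}\longrightarrow\A_{2}$. That theorem supplies a commutative square identifying $\Ch\colon\K_{0}(\A_{1})_{\Q}\to\Hodge(\A_{1})$ with $\Ch\colon\K_{0}(\A_{2})_{\Q}\to\Hodge(\A_{2})$ along isomorphisms on both sides, so surjectivity of one is equivalent to surjectivity of the other. This gives the independence statement.

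I do not anticipate any serious obstacle here, since essentially all of the work has been done in Theorem \ref{MoritaHodge}; the only point that requires a moment of care is verifying that the universality of the constructions $\K_{0}(-)$, $\K_{0}^{\top}(-)$, $\HN_{0}(-)$, $\HC^{\per}_{0}(-)$, $\HH_{0}(-)$ together with the Chern characters $\Ch$, $\Ch^{\top}$ and the canonical maps $\j$, $\pi$ assemble into a functorial diagram under the Morita equivalence $\F$, which is precisely the content used in Theorem~\ref{MoritaHodge}. Once this is observed, the corollary follows with no further computation.
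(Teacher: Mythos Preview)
Your approach is correct and essentially identical to the paper's: both reduce immediately to Theorem~\ref{MoritaHodge} using that quasi-equivalences are derived Morita equivalences. The only minor imprecision is that uniqueness of enhancement (in the sense of Lunts--Orlov) gives an isomorphism in the homotopy category of $\3\dg$ categories, i.e.\ a \emph{zigzag} of quasi-equivalences rather than a single direct $\3\dg$ functor $\F\colon\A_{1}\to\A_{2}$; the paper accordingly says the two enhancements are ``connected by a chain of quasi-equivalences'' and applies Theorem~\ref{MoritaHodge} along the chain, which is the same argument you give, iterated.
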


\begin{proof}
This is because two $\3\dg$ enhancements of the unique enhanced triangulated categories are connected by a chain of quasi-equivalences, and the corollary follows from Theorem \ref{MoritaHodge}.
\end{proof}

\begin{rem}
For a projective smooth variety $\X$, $\b\X\cong \Perf(\X)$ is a unique enhanced triangulated category. Thus, it suffices to check whether the conjecture is true for any pre-triangulated $\3\dg$ enhancement of $\b\X$.
\end{rem}

\begin{thm}\label{SODHodge}
 Suppose we have a $\SOD$, $\b\X=\langle \A,\B \rangle$. There are natural $\3\dg$ enhancement $\A_{\3\dg}$, $\B_{\3\dg}$ of \mbox{$\A$, $\B$} corresponding to $\3\dg$ enhancement $\Per_{\3\dg}(\X)$ of $\b\X$.
 $$\text{\it Hodge conjecture for}\ \X\ \Leftrightarrow \text{\it Noncommutative Hodge conjecture for}\ \A_{\3\dg}\ \text{\it and}\ \B_{\3\dg}.$$
\end{thm}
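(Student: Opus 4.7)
The plan is to combine Theorem~\ref{NHodge} with a noncommutative analogue of the splitting argument used in Theorem~\ref{GSODHodge1}. By Theorem~\ref{NHodge}, the Hodge conjecture for $\X$ is equivalent to the noncommutative Hodge conjecture for $\Per_{\3\dg}(\X)$. It therefore suffices to show that the noncommutative Hodge conjecture for $\Per_{\3\dg}(\X)$ is equivalent to its simultaneous validity for $\A_{\3\dg}$ and $\B_{\3\dg}$.

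First I would verify that the $\SOD$ $\b\X=\langle \A,\B\rangle$ lifts to a $\SOD$ of pre-triangulated $\3\dg$ categories $\Per_{\3\dg}(\X)=\langle \A_{\3\dg},\B_{\3\dg}\rangle$ satisfying the hypothesis of Definition~\ref{additiveinvariant}(2); this is standard in the theory of $\3\dg$ enhancements, with $\A_{\3\dg}$ (resp.\ $\B_{\3\dg}$) taken as the full $\3\dg$ subcategory of $\Per_{\3\dg}(\X)$ on objects lying in $\A$ (resp.\ $\B$). Given this, every additive invariant $\F$ in the sense of Definition~\ref{additiveinvariant} yields a natural direct sum decomposition $\F(\Per_{\3\dg}(\X))\cong \F(\A_{\3\dg})\oplus \F(\B_{\3\dg})$ induced by the inclusions. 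Applying this simultaneously to $\K_{0}$, $\K_{0}^{\top}$, $\HH_{0}$, $\HN_{0}$, and $\HC_{0}^{\per}$, and using that the Chern characters $\Ch$, $\Ch^{\top}$ together with the structural maps $\pi$ and $\j$ are natural transformations between additive invariants, the entire commutative diagram defining $\Hodge(-)$ splits as the direct sum of the analogous diagrams for $\A_{\3\dg}$ and $\B_{\3\dg}$.

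With these compatible splittings in hand, I would chase through the formula $\Hodge(\mathcal{E})=\pi(\j^{-1}(\Ch^{\top}(\K_{0}^{\top}(\mathcal{E})_{\Q})))$ to deduce $\Hodge(\Per_{\3\dg}(\X))=\Hodge(\A_{\3\dg})\oplus \Hodge(\B_{\3\dg})$ inside $\HH_{0}(\Per_{\3\dg}(\X))$; the key point is that both the direct image under $\pi$ and the preimage under $\j$ commute with a splitting of the ambient diagram into a direct sum, because $\pi$ and $\j$ themselves split as direct sums of the corresponding maps on the two components. Similarly, the Chern character $\Ch\colon \K_{0}(\Per_{\3\dg}(\X))_{\Q}\to \Hodge(\Per_{\3\dg}(\X))$ decomposes as the direct sum of its two component Chern characters, and surjectivity for the whole is equivalent to surjectivity on both summands. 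This yields the desired equivalence.

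The main obstacle lies in the first step — confirming that the given $\SOD$ of $\b\X$ genuinely lifts to a $\3\dg$-level $\SOD$ in the sense of Definition~\ref{additiveinvariant}(2), so that Tabuada's additivity applies to each of the relevant invariants simultaneously and compatibly with the Chern character diagram. Once this is verified (by standard $\3\dg$-enhancement techniques, such as gluing along $\3\dg$ bimodules), the remaining diagram chase is purely formal and closely parallels the argument used in Theorem~\ref{MoritaHodge} to establish that the noncommutative Hodge conjecture is a derived Morita invariant.
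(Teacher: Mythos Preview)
Your proposal is correct and follows essentially the same route as the paper: reduce to $\Per_{\3\dg}(\X)$ via Theorem~\ref{NHodge}, lift the $\SOD$ to the $\3\dg$ level, apply additivity of all the relevant invariants together with naturality of the Chern maps to split the defining diagram for $\Hodge(-)$, and conclude. The paper handles your ``main obstacle'' by invoking \cite[Proposition 4.10]{Kuznetsov2015CategoricalRO} to realize the $\SOD$ via a gluing $\mathsf{D}$ quasi-equivalent to $\Per_{\3\dg}(\X)$, then passes back to $\Per_{\3\dg}(\X)$ using Theorem~\ref{MoritaHodge}; your direct use of full $\3\dg$ subcategories is an equivalent packaging of the same input.
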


\begin{proof}
 We still write $\A$ and $\B$ as dg categories corresponding to the natural $\3\dg$ enhancement again. We can lift the semi-orthogonal decomposition to the $\3\dg$ world by \cite[Proposition 4.10]{Kuznetsov2015CategoricalRO}. That is, there is a diagram
 $$\xymatrix{\B\ar[r]_{\i} &\mathsf{D}\ar[r]_{\L}\ar@/_/[l]_{\R}&\A\ar@/_/[l]_{\j}
}$$
where $\mathsf{D}$ is certain gluing of $\A$ and $\B$ and it is quasi-equivalent to $\Per_{\3\dg}(\X)$. Therefore, we still have a diagram such that $\i+\j$ induces isomorphism of $\K$ group, and $\i_{\H}+\j_{\H}$ induces
$$\xymatrix{\K_{0}(\B)\ar[r]_{\i}\ar[d]_{\Ch}&\K_{0}(\mathsf{D})\ar[r]_{\L}\ar@/_/[l]_{\R}\ar[d]_{\Ch}&\K_{0}(\A)\ar[d]_{\Ch}\ar@/_/[l]_{\j}\\
\Hodge(\B)\ar[r]_{\i_{\H}}&\Hodge(\mathsf{D})\ar[r]_{\L_{\H}}\ar@/_/[l]_{\R_{\H}}&\Hodge(\A)\ar@/_/[l]_{\j_{\H}}}$$
Hence $\Ch_{\mathsf{D},\Q}$ maps $\K_{0}(\d)_{\Q}$ surjectively to $\Hodge(\mathsf{D})$ if and only if $\Ch_{\B,\Q}$ and $\Ch_{\A,\Q}$ map $\K_{0}(\B)_{\Q}$ and $\K_{0}(\A)_{\Q}$ surjectively to $\Hodge(\B)$ and $\Hodge(\A)$ respectively. But the noncommutative Hodge conjecture is true for $\mathsf{D}$ if and only if it is true for the Hodge conjecture of $\X$ by the Theorem \ref{NHodge} and Theorem \ref{MoritaHodge}. Thus, the statement follows.
\end{proof}

\begin{rem}
Similar to the geometric case \ref{GSODHodge1}, the statement is still true if there are more than two components for $\SOD$s.
\end{rem}

\par

\begin{thm}
Let $\A$ be a admissible subcategories of $\b\X$ where $X$ is a smooth projective smooth variety.
\end{thm}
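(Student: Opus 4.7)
The plan is to deduce this statement directly from Theorem \ref{SODHodge} together with Corollary \ref{Uniqueenhanced}. By the very definition of an admissible subcategory, the inclusion $\A \hookrightarrow \b\X$ admits both left and right adjoints, so $\A$ fits into a semi-orthogonal decomposition $\b\X = \langle \A, \A^{\perp} \rangle$ (and symmetrically $\b\X = \langle {}^{\perp}\A, \A \rangle$). Thus $\A$ is one of the two components of an honest $\SOD$ of $\b\X$, which is exactly the setting of Theorem \ref{SODHodge}.

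First I would lift the triangulated $\SOD$ $\b\X = \langle \A, \A^{\perp} \rangle$ to the $\3\dg$ level using the gluing procedure already invoked in the proof of Theorem \ref{SODHodge} (following \cite[Proposition 4.10]{Kuznetsov2015CategoricalRO}), producing canonical $\3\dg$ enhancements $\A_{\3\dg}$ and $(\A^{\perp})_{\3\dg}$ sitting inside $\Per_{\3\dg}(\X)$. Next, applying Theorem \ref{SODHodge} to this decomposition yields the equivalence that the Hodge conjecture for $\X$ holds if and only if the noncommutative Hodge conjecture holds simultaneously for $\A_{\3\dg}$ and $(\A^{\perp})_{\3\dg}$. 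The expected conclusion is therefore that the Hodge conjecture for $\X$ implies the noncommutative Hodge conjecture for $\A_{\3\dg}$ (and likewise for $(\A^{\perp})_{\3\dg}$), realising any admissible subcategory as a source of evidence for the noncommutative Hodge conjecture coming from a variety.

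To make the statement well-defined independently of the chosen enhancement, I would invoke Corollary \ref{Uniqueenhanced}: since $\b\X$ is uniquely enhanced and the embedding $\A \hookrightarrow \b\X$ is canonical, any two $\3\dg$ enhancements of $\A$ are connected by a chain of quasi-equivalences, so by Theorem \ref{MoritaHodge} the validity of the noncommutative Hodge conjecture for $\A_{\3\dg}$ does not depend on the choice of enhancement. Finally, via Theorem \ref{admissible}, this reformulates as evidence for A.\ Perry's conjecture \cite[Conjecture 5.11]{perry2020integral} for any admissible subcategory of a variety satisfying the classical Hodge conjecture.

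There is no substantive obstacle here: the proof is essentially a direct specialisation of Theorem \ref{SODHodge} to the two-term $\SOD$ produced by admissibility, combined with the Morita-invariance of the formulation. The only point requiring care is checking that the $\3\dg$ enhancement of $\A$ extracted from admissibility in $\Per_{\3\dg}(\X)$ is indeed the one implicit in Theorem \ref{admissible}, and this is precisely what Corollary \ref{Uniqueenhanced} guarantees.
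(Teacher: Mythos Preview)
The theorem you are trying to prove is not actually a theorem: as it stands in the paper, the statement consists only of a hypothesis (``Let $\A$ be an admissible subcategory of $\b\X$ where $\X$ is a smooth projective variety'') with no conclusion whatsoever, and the paper supplies no proof environment for it. It appears to be a vestigial or truncated fragment left in the source by accident, sandwiched between Theorem~\ref{SODHodge} and the sentence ``We immediately reprove Theorem~\ref{GSODHodge1},'' which leads directly into Corollary~\ref{GSODHodge2}.

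Your proposal is therefore not a proof of the stated ``theorem'' but a reconstruction of what you guessed the author intended. That reconstruction---that the Hodge conjecture for $\X$ implies the noncommutative Hodge conjecture for any admissible $\A_{\3\dg}\subset\Per_{\3\dg}(\X)$---is a perfectly sensible statement, and your derivation of it from Theorem~\ref{SODHodge} and Corollary~\ref{Uniqueenhanced} is correct. But there is nothing in the paper to compare it against: no conclusion, no argument, no indication of what claim was meant to go here. You have filled a gap in the exposition rather than reproduced or diverged from an existing proof.
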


We immediately reprove Theorem \ref{GSODHodge1}.

\begin{cor}\label{GSODHodge2}
Let $\X$ be a projective smooth variety, suppose there is a $\SOD$, $\b\X= \langle \2\b\Z, \1\b\Y\rangle$. Then Hodge conjecture is true for $\X$ if and only for $\z$ and $\Y$. In particular Hodge conjecture is a derived invariant.
\end{cor}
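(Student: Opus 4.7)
The plan is to derive Corollary \ref{GSODHodge2} as a formal consequence of Theorem \ref{SODHodge}, Theorem \ref{NHodge}, and Theorem \ref{MoritaHodge}, using the noncommutative framework built up in this section to short-circuit the explicit Fourier--Mukai analysis of Theorem \ref{GSODHodge1}.

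First, I would apply Theorem \ref{SODHodge} to the decomposition $\b\X = \langle \2\b\Z, \1\b\Y \rangle$. This produces $\3\dg$ lifts $\A_{\3\dg}$ of $\2\b\Z$ and $\B_{\3\dg}$ of $\1\b\Y$ sitting inside $\Per_{\3\dg}(\X)$, and identifies the Hodge conjecture for $\X$ with the conjunction of the noncommutative Hodge conjecture for $\A_{\3\dg}$ and for $\B_{\3\dg}$.

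Next, I would identify these abstract lifts with the standard geometric enhancements. Since $\1\b\Y$ and $\2\b\Z$ are bounded derived categories of smooth projective varieties, they admit unique $\3\dg$ enhancements up to quasi-equivalence by Lunts--Orlov; in particular $\B_{\3\dg}$ is derived Morita equivalent to $\Per_{\3\dg}(\Y)$ and $\A_{\3\dg}$ to $\Per_{\3\dg}(\z)$. Theorem \ref{MoritaHodge} then transports the noncommutative Hodge conjecture across these Morita equivalences, and a final application of Theorem \ref{NHodge} on each side converts the noncommutative statements back into the classical Hodge conjectures for $\z$ and $\Y$. For the derived-invariance assertion I would specialise to a trivial one-component $\SOD$: an equivalence $\b\X \cong \1\b\Y$ lifts, by the same uniqueness input, to a derived Morita equivalence $\Per_{\3\dg}(\X) \cong \Per_{\3\dg}(\Y)$, and then Theorem \ref{MoritaHodge} combined with two applications of Theorem \ref{NHodge} yields equivalence of the classical Hodge conjectures with no further work.

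The only non-formal step is the middle one: matching the abstract $\3\dg$ lifts produced by the gluing construction underlying Theorem \ref{SODHodge} with the geometric enhancements $\Per_{\3\dg}(\Y)$ and $\Per_{\3\dg}(\z)$. I expect this to be the main point to track carefully, since without it the noncommutative Hodge conjecture for $\A_{\3\dg}$ and $\B_{\3\dg}$ cannot be rephrased as a statement about $\z$ and $\Y$; everything else is a diagram chase through the invariants developed in Section 4. Once uniqueness of dg enhancements is invoked, the corollary becomes a one-line consequence of the machinery, and derived invariance of the Hodge conjecture (already observed as Corollary \ref{derivedinvariant}) drops out as the trivial $\SOD$ case.
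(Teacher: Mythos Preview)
Your proposal is correct and matches the paper's own proof essentially line for line: the paper also applies Theorem \ref{SODHodge}, then invokes Lunts--Orlov uniqueness of enhancements \cite{Lunts_2010} to identify the abstract $\3\dg$ lifts with $\Per_{\3\dg}(\z)$ and $\Per_{\3\dg}(\Y)$, after which Theorem \ref{MoritaHodge} and Theorem \ref{NHodge} finish the job. Your identification of the enhancement-matching step as the only non-formal ingredient is exactly right.
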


\begin{proof}
  According to Theorem \ref{SODHodge}, Hodge conjecture is true for $\X$ if and only if it is true for corresponding $\3\dg$ enhancement of $\2\b\Z$ and $\1\b\Y$. Since $\2\b\Z$ and $\1\b\Y$ are unique enhanced triangulated categories \cite{Lunts_2010}, hence the Hodge conjecture is true for $\X$ if and only for $\z$ and $\Y$.
\end{proof}

\begin{cor}\label{Orlovformula}
 Consider blow up $\X$ of $\Y$ with smooth center $\z$, according to Orlov's blow-up formula \cite[Theorem 4.2]{Bondal2002DerivedCO}, we have a $\SOD$, $\b\X=\langle \2\b\Z,\cdots, \2\b\Z, \1\b\Y\rangle$. Hence the Hodge conjecture is true for $\X$ if and only if for $\z$ and $\Y$.
\end{cor}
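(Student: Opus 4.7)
The plan is to treat this as an immediate application of the already-established additivity machinery, with the Orlov blow-up formula providing the required geometric semi-orthogonal decomposition. First I would recall Orlov's blow-up formula: if $\X\to\Y$ is the blow-up of a smooth projective $\Y$ along a smooth closed subvariety $\z$ of codimension $\mathsf{c}$, then there is a semi-orthogonal decomposition
\[
\b\X=\langle \underbrace{\2\b\Z,\ldots,\2\b\Z}_{\mathsf{c}-1},\1\b\Y\rangle,
\]
where the embeddings are induced by pullback along the projection $\X\to\Y$ and by twists of the derived pullback from the exceptional divisor $\mathbb{P}(\mathcal{N}_{\z/\Y})\to\z$. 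Since both $\Y$ and $\z$ are smooth projective varieties, this is a geometric $\SOD$ in the sense of the notation section.

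Next I would apply Corollary \ref{GSODHodge2} (the additivity of the Hodge conjecture along geometric $\SOD$s, which is the derived-invariant form of Theorem \ref{GSODHodge1}). The only mild gap is that Corollary \ref{GSODHodge2} is phrased for two components, while the blow-up decomposition has $\mathsf{c}$ components. This is handled by the Remark after Theorem \ref{SODHodge}, which states that the additivity extends verbatim to $\SOD$s with finitely many components; alternatively, one iterates the two-component version by grouping $\langle \2\b\Z,\ldots,\2\b\Z\rangle$ together and peeling off one copy at a time. The resulting equivalence of conjectures is that the Hodge conjecture holds for $\X$ if and only if it holds for each component appearing in the $\SOD$, and since repeated copies of $\2\b\Z$ impose only the single condition ``Hodge conjecture for $\z$'', we conclude that it holds for $\X$ iff it holds for both $\z$ and $\Y$.

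The hard part, really, is not in this corollary at all but has already been discharged in Theorem \ref{GSODHodge1} / Theorem \ref{SODHodge}, where one must verify that the Hochschild maps $\i_{\H}+\j_{\H}$ assemble into an isomorphism compatible with the Mukai vector and rational structure. Here nothing new is needed: Orlov's fully faithful functors are Fourier--Mukai, so the induced maps on $\HH_{0}$ preserve rational Hodge classes and fit into the same diagram used in the proof of Theorem \ref{GSODHodge1}. Hence the corollary follows with no additional work beyond invoking the blow-up formula and the multi-component extension of the additivity theorem.
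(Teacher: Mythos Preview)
Your proposal is correct and matches the paper's approach exactly: the paper gives no separate proof for this corollary, treating it as an immediate consequence of Orlov's blow-up formula together with the additivity of the Hodge conjecture along geometric $\SOD$s (Corollary \ref{GSODHodge2}, itself a reformulation of Theorem \ref{GSODHodge1}). Your observation about handling the multiple components via iteration or the remark following Theorem \ref{SODHodge} is the only point that could merit comment, and it is entirely routine.
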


\begin{rem}
  It was known by classical method. We can even write down the $\Chow$ groups with respect to the blow up, for explicit details, the reader can refer to the book of C.\ Voisin, ``Hodge theory and complex algebraic geometry $\uppercase\expandafter{\romannumeral2}$'' \cite[Theorem 9.27]{voisin_2003}
\end{rem}

\begin{cor}\label{EC2}
 We reprove Corollary \ref{EC}: Suppose $\b\X$ admits a full exceptional collection, then the Hodge conjecture is true for $\X$.
\end{cor}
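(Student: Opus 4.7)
The plan is to derive the corollary purely from the noncommutative machinery developed in this section, bypassing the geometric argument used in Corollary \ref{EC}. The key ingredients are Theorem \ref{SODHodge} (additivity of the noncommutative Hodge conjecture along $\SOD$s), Theorem \ref{MoritaHodge} (derived Morita invariance), and Theorem \ref{algebra1} (the noncommutative Hodge conjecture for smooth proper connective $\3\dg$ algebras).

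First I would lift the full exceptional collection $\langle \E_1,\ldots,\E_\m\rangle$ of $\b\X$ to the $\3\dg$ level: the full $\3\dg$ subcategories $\A_{\i,\3\dg}\subset\Per_{\3\dg}(\X)$ spanned by the $\E_\i$ assemble into a $\3\dg$-lift of the given $\SOD$, as in the proof of Theorem \ref{SODHodge}. The exceptionality condition $\R\Hom(\E_\i,\E_\i)\cong\C$ (with no higher $\mathsf{Ext}$s) means that the one-object $\3\dg$ category $\A_{\i,\3\dg}$ is quasi-equivalent, hence derived Morita equivalent, to the ground field $\k=\C$ regarded as a $\3\dg$ algebra concentrated in degree $0$.

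Next, I would apply the multi-component version of Theorem \ref{SODHodge} (noted in the remark following that theorem) to reduce the Hodge conjecture for $\X$ to the noncommutative Hodge conjecture for each $\A_{\i,\3\dg}$. By Theorem \ref{MoritaHodge}, each of these is equivalent to the noncommutative Hodge conjecture for $\C$ itself. Since $\C$ is trivially smooth, proper, and connective (all cohomology concentrated in degree $0$), Theorem \ref{algebra1} immediately supplies this. Chaining these equivalences yields the Hodge conjecture for $\X$.

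The argument is essentially a direct assembly of previously established results, so no genuine obstacle is expected; the only point requiring care is the identification of the $\3\dg$ enhancement of $\langle\E_\i\rangle$ with $\k$ up to Morita equivalence, which is a standard consequence of the exceptionality of $\E_\i$. One could alternatively observe that $\HH_0(\k)\cong\C$ is one-dimensional and generated by the class of the unique simple module, which already matches the image of $\K_0(\k)_{\Q}\cong\Q$ under the Chern character, giving an even more direct verification of the noncommutative Hodge conjecture for $\k$ without invoking Theorem \ref{algebra1} in its full strength.
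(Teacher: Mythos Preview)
Your proposal is correct and follows essentially the same route the paper intends: apply Theorem \ref{SODHodge} (in its multi-component form) to the $\SOD$ given by the exceptional collection, then use that each component is derived Morita equivalent to $\k$, for which the noncommutative Hodge conjecture is trivial. One small point: your appeal to Theorem \ref{algebra1} is a forward reference to \S4.3 and is unnecessary here---the direct verification you give at the end (that $\HH_0(\k)\cong\C$ is generated by $\Ch$ of the unit) is exactly what the paper has in mind at this stage, so you should lead with that rather than invoking the connective $\3\dg$-algebra result.
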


For low dimensional varieties, Hodge conjecture is a birational invariant. We use the following lemma$\colon$

\begin{lem}\label{zigzagBlowu}
(\cite[Theorem 0.1.1]{Abramovich1999TorificationAF}) Let $\X$ and $\Y$ be proper smooth varieties. If $\X$ is birational to $\Y$, then there is a chain of blow-ups and blow-downs of smooth centers connecting $\X$ and $\Y$.
$$\xymatrix{&\X_{1}\ar@{-->}[ld]\ar@{-->}[rd]&\cdots&\X_{3}\ar@{-->}[ld]\ar@{-->}[rd]&\\
\X&&\X_{2}&&\Y}$$
\end{lem}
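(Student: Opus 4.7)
The plan is to reduce to a single proper birational morphism of smooth varieties and then factor that morphism through a $\C^{\ast}$-equivariant ``birational cobordism'', analyzing it via variation of GIT.

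First I would apply Hironaka's elimination of indeterminacy: resolving the closure of the graph of the birational map $\X \dashrightarrow \Y$ inside $\X \times \Y$ produces a smooth projective variety $\W$ together with proper birational morphisms $\W \to \X$ and $\W \to \Y$. Thus it suffices to factor a single proper birational morphism $\f \colon \W \to \X$ between smooth proper varieties as a chain of blow-ups and blow-downs with smooth centers; the zig-zag in the lemma is obtained by concatenating the two resulting factorizations.

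Next, following W{\l}odarczyk, I would construct a birational cobordism for $\f$: a normal variety $\B$ of dimension $\dim \X + 1$ carrying a $\C^{\ast}$-action and an equivariant compactification, so that the two GIT quotients of $\B$ at opposite stability chambers recover $\W$ and $\X$ respectively. A natural candidate is built from the deformation to the normal cone of the exceptional locus of $\f$. As one varies the linearization of the $\C^{\ast}$-action across its finitely many walls, one obtains a finite chain of intermediate birational models $\X = \X_{0}, \X_{1}, \ldots, \X_{N} = \W$, where each elementary step $\X_{i} \dashrightarrow \X_{i+1}$ is a wall-crossing modification supported on the locus where semistability changes.

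The heart of the argument --- and the place where I expect the main obstacle --- is to realize each such wall-crossing as an honest chain of blow-ups and blow-downs with \emph{smooth} centers. A priori the $\C^{\ast}$-fixed loci on $\B$, and hence the centers of these elementary modifications, need not be smooth. To remedy this one performs an equivariant resolution of $\B$ and then \emph{torifies} the action so that in a neighborhood of each wall the local structure is toroidal. Once the problem has been reduced to the toric setting, each wall-crossing is governed by a subdivision of a fan, and the desired factorization into smooth blow-ups and blow-downs can be produced combinatorially via star subdivisions. This equivariant resolution plus toroidalization step is the technical core of \cite{Abramovich1999TorificationAF}, and is where essentially all the difficulty lies.
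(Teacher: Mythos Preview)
The paper does not prove this lemma at all: it is stated purely as a citation of the weak factorization theorem of Abramovich--Karu--Matsuki--W{\l}odarczyk \cite[Theorem 0.1.1]{Abramovich1999TorificationAF} and is used as a black box in the subsequent Theorem~\ref{4fouldRinvariant}. So there is no ``paper's own proof'' to compare against.

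That said, your sketch is a faithful outline of the strategy actually used in \cite{Abramovich1999TorificationAF}: reduction to a proper birational morphism via Hironaka, W{\l}odarczyk's birational cobordism with $\C^{\ast}$-action, wall-crossing via variation of GIT, and the key torification step to force the local structure to be toroidal so that each elementary cobordism factors combinatorially into smooth-center blow-ups and blow-downs. You have correctly identified the torification/equivariant resolution step as the technical core. For the purposes of this paper, however, all of this is unnecessary --- the lemma is simply quoted, and your proposal should just be replaced by the citation.
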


The following may be well known for the expects, see also \cite{meng2019hodge}. Here, we use the noncommutative techniques to reprove the results.

\begin{thm}\label{4fouldRinvariant}
Since Hodge conjecture is true for $0$, $1$, $2$ and $3$ dimensional varieties, the Hodge conjecture is a birational invariant for $4$ and $5$ dimensional varieties.
\end{thm}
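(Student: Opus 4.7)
The plan is to combine the weak factorization statement in Lemma \ref{zigzagBlowu} with the blow-up formula Corollary \ref{Orlovformula}, exploiting the fact that in a blow-up of a smooth variety along a smooth center, the center has codimension at least $2$, so in dimensions $4$ or $5$ the center has dimension at most $3$, where the Hodge conjecture is already known.

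More precisely, suppose $\X$ and $\Y$ are smooth projective varieties of dimension $\mathsf{n}\in\{4,5\}$ which are birational. By Lemma \ref{zigzagBlowu} there is a chain
$$\X = \X_{0} \xymatrix{\ar@{<--}[r] & } \X_{1} \xymatrix{\ar@{-->}[r] & } \X_{2} \xymatrix{\ar@{<--}[r] & } \cdots \xymatrix{\ar@{-->}[r] & } \X_{\mathsf{N}}=\Y$$
where each dashed arrow is either a blow-up or blow-down of smooth proper varieties along a smooth center. Since blow-ups preserve dimension, every intermediate $\X_{\i}$ is again of dimension $\mathsf{n}\leq 5$. I would then argue by induction on the length $\mathsf{N}$ of the chain that the Hodge conjecture for $\X$ is equivalent to the Hodge conjecture for $\Y$; the inductive step reduces to the case of a single blow-up morphism $\X_{\i+1}\to\X_{\i}$ (or $\X_{\i}\to\X_{\i+1}$) with smooth center $\z$.

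For such a blow-up, Corollary \ref{Orlovformula} gives the equivalence
$$\text{Hodge conjecture for the blow-up} \Longleftrightarrow \text{Hodge conjecture for the base and for}\ \z.$$
The key numerical observation is that $\z$ has codimension $\geq 2$ inside an $\mathsf{n}$-dimensional variety, hence $\dim \z \leq \mathsf{n}-2 \leq 3$. By the remark preceding the statement (the Hodge conjecture holds for varieties of dimension $\leq 3$), the condition on $\z$ is automatic. Therefore the Hodge conjecture on the blow-up is equivalent to the Hodge conjecture on the base, and the induction propagates along the zig-zag to give Hodge conjecture for $\X$ $\Leftrightarrow$ Hodge conjecture for $\Y$.

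There is no serious obstacle here once Corollary \ref{Orlovformula} and Lemma \ref{zigzagBlowu} are in hand; the only subtle point is the dimension bookkeeping, namely checking that at every intermediate stage the ambient variety is still of dimension $\leq 5$ (so that centers have dimension $\leq 3$) and that the equivalence in Corollary \ref{Orlovformula} can be iterated cleanly without accumulating any condition that is not already known. Both are immediate from the fact that birational modifications by blow-ups along smooth centers preserve dimension.
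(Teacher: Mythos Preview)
Your proposal is correct and follows essentially the same route as the paper: combine weak factorization (Lemma \ref{zigzagBlowu}) with the blow-up formula (Corollary \ref{Orlovformula}), and note that every smooth center appearing along the chain has dimension at most $3$, where the Hodge conjecture is already known. The paper's proof is a single sentence to this effect; your version simply spells out the induction and the codimension-$\geq 2$ observation more explicitly.
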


\begin{proof}
Combining Corollary \ref{Orlovformula} and Lemma \ref{zigzagBlowu}, and observe that $\X$ and $\Y$ are connected by a chain of blow-ups of smooth center whose dimension is less or equal to $3$.
\end{proof}

\subsection{Application to geometry and examples}\label{section4.2}
\par
The survey ``Noncommutative counterparts of celebrated conjecture'' \cite[Section 2]{tabuada2019noncommutative} provides many examples of the applications to the geometry for some celebrated conjectures. The examples also apply to the
noncommutative Hodge conjecture. In this subsection, we still show some interesting examples.

\par

There is a universal functor
$$\U \colon \dgcat \longrightarrow \NChow.$$
We call $\U(\A)$ the noncommutative $\Chow$ motive corresponds to $\A$. We write the image of $\U(\A)$ in $\NChow_{\Q}$ as $\U(\A)_{\Q}$. Similar to works of  G.\ Tabuada, the noncommutative Hodge conjecture is compatible with the direct sum decomposition of the noncommutative $\Chow$ motives.

\begin{thm}\label{SODNMotive}
 Let $\A$, $\B$ and $\c$ be smooth and proper $\3\dg$ categories. Suppose there is a direct sum decomposition$\colon$
 $\U(\c)_{\Q}\cong \U(\A)_{\Q}\oplus \U(\B)_{\Q}$, then noncommutative Hodge conjecture holds for $\c$ if and only if it holds for $\A$ and $\B$.
\end{thm}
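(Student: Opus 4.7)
The plan is to reduce the statement to the formal observation that every functor appearing in the definition of the Hodge classes is an additive invariant, so that a direct sum decomposition in $\NChow_{\Q}$ propagates to compatible direct sum decompositions on each of the relevant groups, after which the surjectivity in Conjecture \ref{Mainconj} splits tautologically.

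First I would record that algebraic $\K$-theory, Hochschild homology, negative cyclic homology, and periodic cyclic homology are additive invariants (see the remark following Theorem \ref{theorem 1.21}), and that Blanc's topological $\K$-theory is also additive. Hence each of the assignments $\K_{0}(\bullet)_{\Q}$, $\K_{0}^{\top}(\bullet)_{\Q}$, $\HH_{0}(\bullet)$, $\HN_{0}(\bullet)$, $\HC^{\per}_{0}(\bullet)$ factors through $\NChow_{\Q}$. The assumed isomorphism $\U(\c)_{\Q}\cong \U(\A)_{\Q}\oplus \U(\B)_{\Q}$ therefore induces compatible direct sum decompositions on each of these five groups.

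Next I would observe that the Chern characters $\Ch$, $\Ch^{\top}$, the inclusion $\j\colon \HN_{0}\hookrightarrow \HC^{\per}_{0}$, and the projection $\pi\colon \HN_{0}\to \HH_{0}$ coming from the Hodge--de Rham filtration are all natural transformations between these additive invariants. Consequently the subspace $\Hodge(\bullet)\subset \HH_{0}(\bullet)$ is functorial in the noncommutative motive, and the decomposition above restricts to $\Hodge(\c)\cong \Hodge(\A)\oplus \Hodge(\B)$, compatibly with $\Ch\colon \K_{0}(\bullet)_{\Q}\to \Hodge(\bullet)$. Since a linear map between direct sums is surjective if and only if both component maps are surjective, this yields the claimed equivalence.

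The main obstacle is verifying the naturality of $\Ch^{\top}$ and of the Hodge projection $\pi$ as transformations on $\NChow_{\Q}$ with values in $\C$-vector spaces. For $\Ch^{\top}$ this is built into Blanc's construction in \cite{blanc_2016}; for $\pi$ it relies on Kaledin's degeneration of the noncommutative Hodge-to-de Rham spectral sequence, which is where the smooth and proper hypothesis on $\A$, $\B$, $\c$ is essential. Everything after that is pure diagram chase.
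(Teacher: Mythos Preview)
Your proposal is correct and follows essentially the same line as the paper's own proof, which is a two-sentence appeal to the fact that rational algebraic and topological $\K$-theory, Hochschild homology, and (periodic, negative) cyclic homology are additive invariants, together with a reference back to the diagram chase of Theorem \ref{SODHodge}. The one point the paper makes explicit that you leave implicit is that the target categories of these invariants are idempotent complete, which is what is needed for the induced functors on $\Hmo^{\mathsf{sp}}_{0,\Q}$ to extend to its idempotent completion $\NChow_{\Q}$ and hence to see the assumed direct sum decomposition.
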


\begin{proof}
 This follows from the fact that the periodic (negative) cyclic homology and rational (topological or algebraic) $\K$-theory are all additive invariants, and the corresponding target categories are idempotent complete. The proof is similar to Theorem \ref{SODHodge}.
\end{proof}

\begin{eg}
 Suppose we have a semi-orthogonal decomposition$\colon$$\H^{0}(\c)=\langle \H^{0}(\A),\H^{0}(\B)\rangle$, then
 $\U(\c)\cong \U(\A)\oplus \U(\B)$.
\end{eg}

\subsubsection{Fractional Calabi--Yau categories}
\begin{thm}\label{egCY}(\cite[Theorem 3.5]{Kuznetsov_2019}) Let $\X$ be a hypersurface of degree $\leq \n+1$ in $\mathbb{P}^{\n}$. There is a semi-orthogonal decomposition $\colon$
$$\Perf(\X)=\langle \mathcal{T}(\X),\O_{\X},\cdots,\O_{\X}(\n-\mathsf{deg}(\X))\rangle.$$
$\mathcal{T}(\X)$ is a fractional Calabi--Yau of dimension $\frac{(\n+1)(\mathsf{deg}(\X)-2)}{\mathsf{deg}(\X)}$.
Then $$\U(\X)\cong \U(\mathcal{T}_{\3\dg}(\X))\oplus \U(\k)\oplus\cdots \oplus \U(\k).$$ Therefore, Hodge conjecture of $\X$ $\Leftrightarrow$ Noncommutative Hodge conjecture of $\mathcal{T}_{\3\dg}(\X)$.
\end{thm}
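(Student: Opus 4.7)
The plan is to combine the results already established: Theorem \ref{NHodge} (the noncommutative Hodge conjecture for $\Per_{\3\dg}(\X)$ is equivalent to the classical conjecture for $\X$) and Theorem \ref{SODNMotive} (the noncommutative Hodge conjecture is additive for direct sum decompositions of noncommutative Chow motives). The only real work is to promote the triangulated $\SOD$ from Kuznetsov's result to a decomposition in $\NChow_{\Q}$ and verify that the noncommutative Hodge conjecture is trivial for $\U(\k)$.

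First I would lift the $\SOD$ of $\Perf(\X)$ in the statement to a $\3\dg$-enhanced $\SOD$ of $\Per_{\3\dg}(\X)$, using the same mechanism invoked in the proof of Theorem \ref{SODHodge} (namely \cite[Proposition 4.10]{Kuznetsov2015CategoricalRO}). This gives a semi-orthogonal decomposition
\[
\Per_{\3\dg}(\X) = \langle \mathcal{T}_{\3\dg}(\X),\ \underline{\C},\ \cdots,\ \underline{\C}\rangle,
\]
where each $\underline{\C}$-block is the $\3\dg$ subcategory generated by the exceptional line bundle $\O_{\X}(\i)$; here the number of $\underline{\C}$ summands is $\r = \n - \mathsf{deg}(\X) + 1$, and each is quasi-equivalent to $\k$ since $\mathsf{Ext}^{\ast}(\O_{\X}(\i),\O_{\X}(\i))=\k$ concentrated in degree zero.

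Next, applying the universal additive invariant $\U$ to this $\3\dg$-$\SOD$ and using the additivity property from Definition \ref{additiveinvariant} and Theorem \ref{theorem 1.21} (together with $\Q$-linearization and idempotent completion), I would conclude the desired direct sum decomposition
\[
\U(\X)_{\Q} \;\cong\; \U(\mathcal{T}_{\3\dg}(\X))_{\Q} \oplus \U(\k)_{\Q}^{\oplus \r}
\]
in $\NChow_{\Q}$. Then Theorem \ref{SODNMotive} reduces the noncommutative Hodge conjecture for $\Per_{\3\dg}(\X)$ to the conjunction of the noncommutative Hodge conjecture for $\mathcal{T}_{\3\dg}(\X)$ and for $\k$.

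It therefore remains to observe that the noncommutative Hodge conjecture is trivially true for $\underline{\C}$: one has $\K_{0}(\k) = \Z$ and $\HH_{0}(\k) = \C$, and the Chern character sends $1 \mapsto 1$, so $\Ch(\K_{0}(\k)_{\Q}) = \Q$. A direct inspection of the defining diagram (using $\K^{\top}_{0}(\k)_{\Q} = \Q$ and $\HC^{\per}_{0}(\k) = \C$) shows $\Hodge(\k) = \Q$, so the surjectivity is obvious. Combining these reductions with Theorem \ref{NHodge}, which identifies the noncommutative Hodge conjecture for $\Per_{\3\dg}(\X)$ with the classical Hodge conjecture for $\X$, yields the claimed equivalence. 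No serious obstacle is expected here; the only step requiring any care is the $\3\dg$-lift of the $\SOD$, but this has already been handled in the proof of Theorem \ref{SODHodge}.
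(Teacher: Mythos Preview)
Your proposal is correct and matches the paper's (largely implicit) argument: the paper presents this theorem as an immediate application, relying on the example preceding Theorem \ref{SODNMotive} (a $\SOD$ yields $\U(\c)\cong\U(\A)\oplus\U(\B)$) together with Theorems \ref{SODHodge}/\ref{SODNMotive} and \ref{NHodge}, exactly as you outline. The only minor remark is that you could invoke Theorem \ref{SODHodge} directly (which already handles the $\3\dg$ lift and the reduction to the components) rather than passing through $\NChow_{\Q}$ via Theorem \ref{SODNMotive}, but either route is fine and the paper's introduction (Theorem \ref{Example}) explicitly cites both.
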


\subsubsection{Twisted scheme.}
\begin{defn}
 Let $\X$ be a scheme with structure sheaf $\O_{\X}$. $\A$ is a sheaf of Azumaya algebra over $\X$. We call the derived category of perfect $\A$ module $\Perf(\X,\A)$ the twisted scheme.
\end{defn}

\begin{thm}\label{egTwistedscheme1}
  Noncommutative Hodge conjecture for $\Per_{\3\dg}(\X,\A)$ $\Leftrightarrow$ Noncommutative Hodge conjecture for $\Per_{\3\dg}(\X)$.
 \end{thm}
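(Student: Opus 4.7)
The plan is to combine two ingredients: the theorem of Tabuada and Van den Bergh referenced in the introduction, which gives $\U(\Per_{\3\dg}(\X,\A))_{\Q}\cong \U(\Per_{\3\dg}(\X))_{\Q}$ in $\NChow_{\Q}$, with the fact that every functor entering the definition of $\Hodge(-)$ factors through $\NChow_{\Q}$ after rationalization. Granted this, the statement follows from the same categorical argument as Theorem~\ref{SODNMotive}, by taking the second summand to be zero.

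More concretely, I would first recall that the rationalized algebraic $\K$-theory $\K_{0}(-)_{\Q}$, together with Hochschild homology $\HH_{0}$, negative cyclic homology $\HN_{0}$, and periodic cyclic homology $\HC_{0}^{\per}$, are all additive invariants in the sense of Definition~\ref{additiveinvariant} and therefore, by Theorem~\ref{theorem 1.21}, factor through $\Hmo_{0}$; after rationalization and idempotent completion, they factor through $\NChow_{\Q}$. The same is true, rationally, of Blanc's topological $\K$-theory $\K_{0}^{\top}(-)_{\Q}$, which is also an additive invariant of $\3\dg$-categories. The natural transformations $\Ch$, $\Ch^{\top}$, $\j$, and $\pi$ appearing in the defining diagram of $\Hodge(-)$ are morphisms of such functors, and hence are natural with respect to morphisms in $\NChow_{\Q}$.

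Applying the Tabuada--Van den Bergh isomorphism $\U(\Per_{\3\dg}(\X,\A))_{\Q}\cong \U(\Per_{\3\dg}(\X))_{\Q}$ to the whole commutative diagram defining $\Hodge(-)$ then yields compatible isomorphisms on $\K_{0}(-)_{\Q}$, $\HH_{0}$, $\HN_{0}$, $\HC_{0}^{\per}$, and $\K_{0}^{\top}(-)_{\Q}$, which restrict to an isomorphism $\Hodge(\Per_{\3\dg}(\X,\A))\cong \Hodge(\Per_{\3\dg}(\X))$ identifying the two Chern characters from algebraic $\K$-theory. Surjectivity of one Chern character onto its Hodge classes is therefore equivalent to surjectivity of the other.

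The main subtlety I anticipate is confirming that Blanc's rational topological $\K$-theory genuinely descends to $\NChow_{\Q}$ rather than being only functorial on $\dgcat$, since this functor lies outside the purely algebraic motivic picture; this is where I would cite Blanc's paper most carefully. Once that point is secured, the rest of the proof is a diagram chase of the same flavour as those used in the proofs of Theorem~\ref{SODHodge} and Theorem~\ref{SODNMotive}, and no further geometric input is required beyond the Tabuada--Van den Bergh motivic isomorphism for Azumaya algebras.
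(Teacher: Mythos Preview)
Your proposal is correct and follows essentially the same approach as the paper: invoke the Tabuada--Van den Bergh isomorphism $\U(\Per_{\3\dg}(\X,\A))_{\Q}\cong \U(\Per_{\3\dg}(\X))_{\Q}$ and then apply Theorem~\ref{SODNMotive} (with the second summand trivial). Your expanded discussion of why the relevant invariants and Chern characters descend to $\NChow_{\Q}$, and your flagging of the topological $\K$-theory point, simply make explicit what the paper's two-line proof leaves implicit in its appeal to Theorem~\ref{SODNMotive}.
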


 \begin{proof}
 According to \cite[Theorem 2.1]{tabuadavandenbergh2015}, $\U(\Per_{\3\dg}(\X,\A))_{\Q}\cong \U(\Per_{\3\dg}(\X))_{\Q}$. Thus, by Theorem \ref{SODNMotive}, the statement follows.
 \end{proof}

 \subsubsection{Cubic fourfold containing a plane.}
 \begin{eg}\label{egTwistedscheme2}
 Let $\X$ be a cubic fourfold containing a plane. There is a semi-orthogonal decomposition\cite[Theorem 4.3]{Kuznetsov2010DerivedCO}
 $$\Perf(\X)=\langle\Perf(\S,\A),\O_{\X},\O_{\X}(1),\O_{\X}(2)\rangle.$$
 $\mathsf{S}$ is a $\K_{3}$ surface, and $\A$ is a sheaf of Azumaya algebra over $\mathsf{S}$. Since the noncommutative Hodge conjecture is true for $\Per_{\3\dg}(\mathsf{S},\A)$ which is unique enhanced, hence the Hodge conjecture is true for $\X$.
 \end{eg}

 \subsubsection{Quadratic fibration.}
 \begin{eg}\label{egQF}
   Let $\f\colon \X\longrightarrow \mathsf{S}$ be a smooth quadratic
   fibration, for example, the smooth quadric in relative projective space $\mathbb{P}^{\n}_{\mathsf{S}}$. There is a semi-orthogonal decomposition
   $$\Perf(\X)=\langle \Perf(\mathsf{S},\mathsf{Cl}_{0}),\Perf(\mathsf{S}),\cdots,\Perf(\mathsf{S})\rangle.$$
   $\mathsf{Cl}_{0}$ is a sheaf of Azumaya algebra over $\mathsf{S}$ if the dimension $\n$ of the fiber of $\f$ is odd \cite{Kuznetsov2005DerivedCO}. Thus, the Hodge conjecture of $\X$ $\Leftrightarrow$ $\mathsf{S}$. Moreover, if $\dim \mathsf{S}\leq 3$, the Hodge conjecture for $\X$ is true.
 \end{eg}

 \subsubsection{HP duality}\

 Let $\X$ be a projective smooth variety with morphism $\f\colon \X\longrightarrow \mathbb{P}(\V)$. Set $\O_{\X}(1)=\f^{*}\O_{\mathbb{P}(\V)}(1)$. Assume there is a $\SOD$
 $$\b\X=\langle \A_{0},\A_{1}(1),\cdots,\A_{\m-1}(\m-1)\rangle$$
 where $\A_{\m-1}\subset \cdots \subset\A_{1}\subset \A_{0}$.
 Define $\H:= \X\times _{\mathbb{P}(\V)}\mathsf{Q}$, where $\mathsf{Q}$ is the incidence quadric in $\mathbb{P}(\V)\times \mathbb{P}(\V^{\ast})$. Then, there is a $\SOD$
 $$\mathsf{D}^{\mathsf{b}}(\mathsf{H})=\langle \mathcal{L},\A_{1,\mathbb{P}(\V^{\ast})}(1),\cdots,\A_{\m-1,\mathbb{P}(\V^{\ast})}(m-1)\rangle.$$
 Projective smooth variety $\Y$ with morphism $g:\Y\longrightarrow \mathbb{P}(\V^{\ast})$ is called homological projective dual of $\X$ if there is an object $\mathcal{E}\in \mathsf{D}^{\mathsf{b}}(\mathsf{H}\times_{\mathbb{P}(\V^{\ast})}\Y)$ which induces an equivalence from $\1\b\Y$ into~$\mathcal{L}$.

\par

We refer to \cite[Section 2.3]{kuznetsov2015semiorthogonal} or Kuznetsov's original paper \cite{PMIHES_2007__105__157_0}. Let $(\Y,\mathsf{g})$ be a $\mathsf{HP}$ dual of $(\X,\f)$, then \\
1. There is a $\SOD$
$$\1\b\Y=\langle \B_{\n-1}(1-\n),\cdots,\B_{1}(-1),\B_{0}\rangle$$
where $\B_{\n-1}\subset \cdots \subset \B_{1}\subset \B_{0}$. Moreover $\A_{0}\cong \B_{0}$ via Fourier-Mukai functor. \\
2. (Symmetry) $(\X,\f)$ is a $\mathsf{HP}$ dual of $(\Y,\mathsf{g})$. \\
3. For any subspace $\L\subset \V^{\ast}$, define $\X_{\L}=\X\times_{\mathbb{P}(\V)}\mathbb{P}(\L^{\perp})$ and $\Y_{\L}= \Y\times _{\mathbb{P}(\V^{\ast})}\mathbb{P}(\L)$. If we assume that they have the expected dimension,
$\dim\X_{\L}=\dim\X-\dim \L$, $\dim\Y_{\L}=\dim\Y-(\dim \V-\dim \L)$, and write $\dim \L=\r$, $\dim \V=\mathsf{N}$,
then there are $\SOD$ such that $\mathcal{L}_{\X,\L}\cong \mathcal{L}_{\Y,\L}$.
$$\mathsf{D}^{\mathsf{b}}(\X_{\L})=\langle \mathcal{L}_{\X,\L},\A_{\r}(\r),\cdots,\A_{\m-1}(\m-1)\rangle.$$
$$\mathsf{D}^{\mathsf{b}}(\Y_{\L})=\langle \B_{\n-1}(1-\n),\cdots, \B_{\mathsf{N}-\mathsf{r}}(\mathsf{r}-\mathsf{N}),\mathcal{L}_{\Y,\L}\rangle.$$
\begin{thm}\label{HPD}
We write $\Hodge(\bullet)$ if the (noncommutative) Hodge conjecture is true for varieties (smooth and proper $\3\dg$ categories). Then, $\Hodge(\X)$ $\Leftrightarrow$ $\Hodge(\A_{0})$ $\Leftrightarrow$ $\Hodge(\B_{0})$ $\Leftrightarrow$ $\Hodge(\Y)$. If we assume $\Hodge(\X)$, then $\Hodge(\X_{\L})\Leftrightarrow \Hodge(\Y_{\L})$.
\end{thm}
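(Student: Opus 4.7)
The plan is to combine three ingredients from the earlier sections: additivity of the noncommutative Hodge conjecture over semi-orthogonal decompositions (Theorem \ref{SODHodge}), its invariance under derived Morita equivalence (Theorem \ref{MoritaHodge}), and the Fourier--Mukai equivalence $\A_0\cong\B_0$ built into the $\HPD$ setup. The middle equivalence $\Hodge(\A_0)\Leftrightarrow\Hodge(\B_0)$ is immediate from $\A_0\cong\B_0$ together with Theorem \ref{MoritaHodge}, so the task reduces to proving the two outer equivalences $\Hodge(\X)\Leftrightarrow\Hodge(\A_0)$ and $\Hodge(\Y)\Leftrightarrow\Hodge(\B_0)$.

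For $\Hodge(\X)\Leftrightarrow\Hodge(\A_0)$ I would first apply Theorem \ref{SODHodge} (extended to more than two components as in the remark following its proof) to the Lefschetz decomposition
$$\b\X=\langle \A_0,\A_1(1),\ldots,\A_{\m-1}(\m-1)\rangle.$$
This identifies $\Hodge(\X)$ with the noncommutative Hodge conjecture for each component $\A_\i(\i)$. Each twist $\A_\i(\i)$ is derived Morita equivalent to $\A_\i$ via tensor product with $\O_\X(\i)$, so by Theorem \ref{MoritaHodge} the condition becomes noncommutative Hodge for every $\A_\i$, $\i=0,\ldots,\m-1$. The direction $\Hodge(\X)\Rightarrow\Hodge(\A_0)$ is then immediate. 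For the converse I would exploit the chain of admissible inclusions $\A_{\m-1}\subset\cdots\subset\A_0$: each step yields a semi-orthogonal decomposition $\A_{\i-1}=\langle \A_\i,\A_\i^{\perp}\cap\A_{\i-1}\rangle$, and the universal additive invariant $\U$ sends this to a direct sum decomposition of noncommutative Chow motives. Theorem \ref{SODNMotive} then propagates noncommutative Hodge from $\A_0$ downward to each $\A_\i$. The same argument with the roles of $\X$ and $\Y$ interchanged yields $\Hodge(\Y)\Leftrightarrow\Hodge(\B_0)$, using the Lefschetz decomposition of $\1\b\Y$ and the chain $\B_{\n-1}\subset\cdots\subset\B_0$.

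For the last claim, assume $\Hodge(\X)$. By the four-way equivalence just established we also have $\Hodge(\Y)$, and by the propagation argument above the noncommutative Hodge conjecture holds for every $\A_\i$ and every $\B_\j$ individually. Applying Theorem \ref{SODHodge} to the Lefschetz decompositions of $\mathsf{D}^{\mathsf{b}}(\X_\L)$ and $\mathsf{D}^{\mathsf{b}}(\Y_\L)$ displayed before the theorem, $\Hodge(\X_\L)$ reduces to the noncommutative Hodge conjecture for $\mathcal{L}_{\X,\L}$ together with $\A_\r,\ldots,\A_{\m-1}$, and $\Hodge(\Y_\L)$ reduces to the same conjecture for $\mathcal{L}_{\Y,\L}$ together with $\B_{\mathsf{N}-\r},\ldots,\B_{\n-1}$. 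The $\A_\i$ and $\B_\j$ pieces are already handled by $\Hodge(\X)$, so both statements collapse to noncommutative Hodge for the common piece $\mathcal{L}_{\X,\L}\cong\mathcal{L}_{\Y,\L}$, and the desired equivalence follows from Theorem \ref{MoritaHodge}.

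The main obstacle, and the only place where anything beyond mechanical application of the additivity theorems is required, is the inheritance step: one must use the fact that the Lefschetz chain $\A_{\m-1}\subset\cdots\subset\A_0$ consists of admissible inclusions to produce honest semi-orthogonal decompositions of each $\A_{\i-1}$, and lift these to the $\3\dg$ enhancements so that Theorem \ref{SODNMotive} really applies. A secondary technicality is to verify that tensor with $\O_\X(\i)$ yields a derived Morita equivalence between the $\3\dg$ enhancements of $\A_\i$ and $\A_\i(\i)$; this is standard but worth recording before chaining the reductions.
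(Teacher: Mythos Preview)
Your proposal is correct and follows the same strategy as the paper: reduce everything to the additivity theorems for semi-orthogonal decompositions plus Morita invariance, using the Fourier--Mukai equivalences $\A_0\cong\B_0$ and $\mathcal{L}_{\X,\L}\cong\mathcal{L}_{\Y,\L}$ supplied by $\HPD$. The paper's own proof is much terser---it only spells out the midterm equivalence and the $\mathcal{L}$-piece, leaving the outer equivalences and the propagation along the Lefschetz chain implicit---so your inheritance argument via the admissible inclusions $\A_{\m-1}\subset\cdots\subset\A_0$ is a genuine addition of detail rather than a different route. The one point the paper makes explicit that you leave as a ``secondary technicality'' is that a Fourier--Mukai equivalence between admissible subcategories lifts to an isomorphism of the natural $\3\dg$ enhancements in $\Hmo$; the paper cites \cite[Section 9]{bernardara2014semiorthogonal} for this, and you should too, since Theorem~\ref{MoritaHodge} needs an honest $\3\dg$ functor, not just a triangulated equivalence.
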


\begin{proof}
  The midterm equivalence $\Hodge(\A_{0})\Leftrightarrow\Hodge(\B_{0})$ is because $\A_{0}\cong\B_{0}$ via a Fourier-Mukai functor, and then there is an isomorphism of natural $\3\dg$ enhancements $\A_{\3\dg,0}\cong\B_{\3\dg,0}$ in $\Hmo$, see a proof in \cite[Section 9]{bernardara2014semiorthogonal}. Since $\mathcal{L}_{\X,\L}\cong \mathcal{L}_{\Y,\L}$ via Fourier-Mukai functor, the statement $\Hodge(\X_{\L})\Leftrightarrow \Hodge(\Y_{\L})$ follows from the same argument.
\end{proof}

\begin{rem}
 The $\HPD$ can be generalized to the noncommutative version, see the discussion in \cite[Section 3.4]{kuznetsov2015semiorthogonal} or the paper by Alexander Perry, ``Noncommutative homological projective duality'' \cite{PERRY2019877}.
\end{rem}

\begin{eg}\label{egHPD}
One of the nontrivial examples of the Homological projective duality comes from the Grassmannian-Pfaffian duality. Let $\W$ be a dimension $\n$ vector space, $\X=\mathsf{Gr}(2,\W)$ the Grassmannian of 2-dimensional sub-vector spaces of $\W$.
Consider the projective space $\mathbb{P}(\wedge^{2}\W^{\ast})$, there is a natural filtration called the Pfaffian filtration$\colon$ $\Pf(2,\W^{\ast})\subset \Pf(4,\W^{\ast})\cdots \subset \mathbb{P}(\wedge^{2}\W^{\ast})$.
$$\Pf(2\k,\W^{\ast})=\{\omega\in\mathbb{P}(\wedge^{2}\W^{\ast})\mid\mathsf{rank}(\omega)\leq 2\k\}$$
The intermediate Pfaffians are no longer smooth but with singularities. The singularity of $\Pf(2\k,\W^{\ast})$ is $\Pf(2\k-2,\W^{\ast})$. Classically, it was known that $\Y=\Pf(2\lfloor\frac{\n}{2}\rfloor-2,\W^{\ast})$ is the classical projective dual of $\X=\mathsf{Gr}(2,\W)$ via the Pl\"{u}cker embedding. For $\n\leq 7$, the noncommutative categorical resolution of $\Pf(2\lfloor\frac{\n}{2}\rfloor-2,\W^{\ast})$ is the homological projective dual of $\mathsf{Gr}(2,\W)$. However, it was not known for the cases $\n\geq 8$. The interested reader can refer to a survey \cite[Section 4.4, Conjecture 4.4]{kuznetsov2015semiorthogonal} or Kuznetsov's original paper \cite{Kuznetsov2006HomologicalPD}.

\par
The known nontrivial Grassmannian-Pfaffian duality are the cases $\n=6, 7$. In these cases, Hodge conjecture is true for $\X$ since it has a full exceptional collection, then the noncommutative Hodge conjecture is true for the noncommutative categorical resolution of the Pfaffians. However, the Hodge conjecture is trivial for the noncommutative category since it automatically has full exceptional collections, or the geometric resolution of the Pfaffians are of the form $\mathbb{P}_{\mathsf{Gr}(2,\W)}(\E)$ \cite[Section 4]{Kuznetsov2006HomologicalPD} for some vector bundle $\E$. It has a full exceptional collection too.

\par

 We expect to obtain duality of the Hodge conjecture for $\X_{\L}$ and $\Y_{\L}$ when they are smooth, and have the expected dimension.
 According to the Lefschetz hyperplane theorem, there is a commutative diagram for $\i\leq \dim \X_{\L}-1\colon$
$$\xymatrix{\CH^{\i}(\X_{\L})_{\Q}\ar[r]&\H^{\i}(\X_{\L},\mathbb{Q})\\
\CH^{\i}(\mathsf{Gr}(2,\W))_{\Q}\ar[r]^{\cong}\ar[u]&\H^{\i}(\mathsf{Gr}(2,\W),\mathbb{Q})\ar[u]^{\cong}}$$
The Hodge conjecture is true for weight less than $\dim\X_{\L}$. By the hard Lefschetz isomorphism, it is still true for weight greater than $\dim\X_{\L}$. Thus, if $\dim\X_{\L}$ is odd, the Hodge conjecture for $\X_{\L}$ is true.

\par
The following examples for $\n=6,7$ are from paper \cite[Section 10]{Kuznetsov2006HomologicalPD}.
\par
$\uppercase\expandafter{\romannumeral1}$.\ $\n=6$, $\dim\X_{\L}=8-\dim\L$, $\dim\Y_{\L}=\dim\L-2$. When $\dim\L=6$, the expected dimension of $\X_{\L}$ is $2$ while the expected dimension of $\Y_{\L}$ is $4$. This is the duality between Pfaffian cubic fourfold and the $\K_{3}$ surface \cite{Kuznetsov2006HomologicalPD}. When $\dim\L=5$, $\dim\X_{\L}=\dim\Y_{\L}=3$, the Hodge conjecture is true by dimension reason. When $\dim\L=4$, $\Y_{\L}=\Pf(4,6)\cap \mathbb{P}^{3}$ is a cubic surface. Then $\X_{\L}=\mathsf{Gr}(2,6)\cap \mathbb{P}^{10}$ has a full exceptional collection. $\X_{\L}$ is a rational Fano $4$-fold \cite[Section 2.2, Theorem 2.2.1]{XF}. Hence, the Hodge conjecture is true for $\X_{\L}$ by weak factorization theorem \cite[Theorem 0.1.1]{Abramovich1999TorificationAF}. When $\dim\L=3$, $\dim\X_{\L}=5$, the Hodge conjecture is true for $\X_{\L}$. When $\dim\L=2$, $\X_{\L}$ admits a full exceptional collection. We obtain a table.\\
\begin{center}
\begin{tabular}{|c|c|c|c|}
  \hline

  $\dim\L$ & $\dim\X_{\L}$&$\dim\Y_{\L}$&classically \\
  \hline
  2&6&0& \\
  \hline
   3& 5&1&Known\\
  \hline
   4&4&2& Known,\ $\X_{\L}$\ is\ a\ rational\ Fano\ 4-fold  \\
  \hline
   5&3&3&Known,\ they\ are\ 3-fold\\
  \hline
   6&2&4&Known,\ $\Y_{\L}$\ is\ a\ cubic\ 4-fold\\
  \hline
\end{tabular}
\end{center}

$\uppercase\expandafter{\romannumeral2}$.\ $\n=7$, $\dim\X_{\L}=10-\dim\L$, $\dim\Y_{\L}=\dim\L-4$. For example, take $\dim\L=7$. The expected dimension of $\X_{\L}$ and $\Y_{\L}$ are both $3$. The Hodge conjecture is true for them by dimension reason. When $\dim\L=5$, $\dim\X_{\L}=5$, the Hodge conjecture is true for $\X_{\L}$.
When $\dim\L=6$, $\dim \X_{\L}=4$, it is a fano $4$-fold. When $\dim\L=8$, $\dim\Y_{\L}=4$, it is a fano $4$-fold. Since fano varieties are uniruled, the Hodge conjecture is true for fano $4$-folds \cite{CM78}. When $\dim\Y_{\L}=9$, $\Y_{\L}$ is a fano $5$-fold, the Hodge conjecture is true for fano 5-folds by \cite{DA}. When $\dim\L=10$, $\Y_{\L}$ admits a full exceptional collection. We obtain a table.

\begin{center}
\begin{tabular}{|c|c|c|c|}
  \hline

  $\dim\L$ & $\dim\X_{\L}$&$\dim\Y_{\L}$&classically \\
  \hline
   5& 5&1&Known,\ since\ dimension\ of\ $\X_{\L}$\ is\ odd\\
  \hline
   6&4&2&Known,\ $\X_{\L}$\ is\ a\ fano\ 4-fold\\
  \hline
   7&3&3&Known\ by\ dimension\ reason\\
  \hline
   8&2&4&Known,\ $\Y_{\L}$\ is\ a\ fano\ 4-fold\\
  \hline
   9&1&5&Known,\ $\Y_{\L}$\ is\ a\ fano\ 5-fold \\
  \hline
  10&0&6& \\
  \hline
\end{tabular}
\end{center}

\begin{rem}
  We thanks Claire Voisin pointing out to the author a classical result that the Hodge conjecture is true for uniruled $4$-folds \cite{CM78}. Even though most examples here can be proved by classical methods, we hope that we can use geometry of dual varieties to prove Hodge conjecture of these examples, see also the Conjecture \ref{Conjprojectivedual} below. We leave the blanks in the tables since it is not known for the author whether the Hodge conjecture is proved for these cases previously.
\end{rem}

$\uppercase\expandafter{\romannumeral3}$.\ For $\n\geq 8$, the $\mathsf{HPD}$ is not constructed. However, when $\n=10$, there is an interesting picture inspired by the Mirror Symmetry which was constructed by E.\ Segal and RP.\ Thomas \cite[Theorem A]{Segal:2014jua}.
 \par
 Let $\L$ be a $5$-dimensional subspace of $\wedge^{2}\W^{\ast}$, $\L^{\perp}\subset \wedge^{2}\W$. Write $\X=\mathsf{Gr}(2,10)\subset \mathbb{P}^{44}$ and $\Y=\Pf(8,10)\subset\mathbb{P}^{44}$; $\X_{\L}=\mathbb{P}(\L^{\perp})\cap\X$, $\Y_{\L}=\mathbb{P}(\L)\cap\Y$. We choose general linear subspace $\L$ such that both $\X_{\L}$ and $\Y_{\L}$ are smooth. In particular, $\Y_{\L}$ is quintic $3$-fold and $\X_{\L}$ is a Fano $11$-fold. According to E.\ Segal and RP.\ Thomas \cite[Theorem A]{Segal:2014jua}, there is a fully faithful embedding
 $$\mathsf{D}^{\mathsf{b}}(\Y_{\L})\hookrightarrow \mathsf{D}^{\mathsf{b}}(\X_{\L}).$$
Let $\A$ be the exceptional collections $\{\Sym^{3}\S, \Sym^{2}\S,\S,\O\}$ of $\mathsf{D}^{\mathsf{b}}(\mathsf{Gr}(2,10))$, where $\S$ is the tautological bundle on $\mathsf{Gr}(2,10)$. It restricts to an exceptional collections in $\mathsf{D}^{\mathsf{\mathsf{b}}}(\X_{\L})$ by techniques in \cite{Kuznetsov2006HomologicalPD}. Then, let $\langle \A,\A(1),\cdots,\A(4)\rangle$ be an exceptional collection in $\mathsf{D}^{\mathsf{b}}(\X_{\L})$. They are right orthogonal to the above embedding
of $\mathsf{D}^{\mathsf{b}}(\Y_{\L})$, see description in \cite[Remark 3.8]{Segal:2014jua}. The Hochschild homology $\HH_{0}(\X_{\L})\cong \mathbb{C}^{24}$ and $\HH_{0}(\Y_{\L})\cong \mathbb{C}^{4}$. Therefore, $0^\text{th}$ Hochschild homology of the right orthogonal complement of $\langle \A,\A(1),\cdots,\A(4),\mathsf{D}^{\mathsf{b}}(\Y_{\L})\rangle$ is trivial. Thus, the Hodge conjecture for $\X_{\L}$ follows from the additive theory.

\par

Inspired by the examples above, we expect that even though we do not have $\HPD$, the duality of the Hodge conjecture between linear section of the dual varieties can be proved by classical methods.
\end{eg}

\begin{conj}\label{Conjprojectivedual}
 Let $\X\subset\mathbb{P}(\V)$ be a projective smooth variety. Suppose the Hodge conjecture is true for $\X$. Let $\Y\subset\mathbb{P}(\V^{\ast})$ be the projective dual of $\X\subset\mathbb{P}(V)$. Choose a linear subspace $\L\subset\V^{\ast}$.
  Suppose the linear sections $\X_{\L}=\X\cap \mathbb{P}(\L^{\perp})$ and $\Y_{\L}=\Y\cap \mathbb{P}(\L)$ are both of expected dimension and smooth. Then, the Hodge conjecture of $\X_{\L}$ is equivalent to the Hodge conjecture of $\Y_{\L}$.
\end{conj}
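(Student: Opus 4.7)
The plan is to mimic classically the categorical argument of Theorem \ref{HPD}: use Lefschetz-type theorems to reduce to the primitive middle cohomology, and then produce an algebraic correspondence between $\X_{\L}$ and $\Y_{\L}$ that identifies these pieces, playing the classical role of the HPD equivalence $\mathcal{L}_{\X,\L}\cong\mathcal{L}_{\Y,\L}$.

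First I would set up the decomposition on the $\X_{\L}$ side. Iterating the Lefschetz hyperplane theorem along the inclusion $\X_{\L}\hookrightarrow\X$, realized as cutting $\X$ by $\dim\L$ hyperplanes in $\mathbb{P}(\V)$, gives that the restriction $H^{\i}(\X,\Q)\to H^{\i}(\X_{\L},\Q)$ is an isomorphism of pure Hodge structures for $\i<\dim\X_{\L}$ and injective for $\i=\dim\X_{\L}$; together with hard Lefschetz, this reduces $\Hodge(\X_{\L})$ to $\Hodge(\X)$ together with the Hodge conjecture for the primitive middle cohomology $H^{\dim\X_{\L}}(\X_{\L},\Q)_{\mathrm{prim}}$. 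On the $\Y_{\L}$ side the same scheme is less straightforward because $\Y$ is typically singular, but one can still use the Lefschetz decomposition of the smooth $\Y_{\L}$ with respect to $\O_{\Y_{\L}}(1)$ to isolate $H^{\dim\Y_{\L}}(\Y_{\L},\Q)_{\mathrm{prim}}$; the non-primitive components of $\Y_{\L}$ will then be handled by transporting algebraic classes from $\X_{\L}$ via the correspondence constructed below, together with the biduality $\Y^{\ast\ast}=\X$.

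The core step is to construct an algebraic cycle $\Gamma\in\CH^{\ast}(\X_{\L}\times\Y_{\L})_{\Q}$ whose cohomology class induces, up to a Tate twist, an isomorphism of pure Hodge structures between the two primitive middle cohomologies. The natural candidate is the restriction to $\X_{\L}\times\Y_{\L}$ of the conormal variety $\{(x,H)\in\X\times\mathbb{P}(\V^{\ast}):H\supset T_{x}\X\}\subset\X\times\Y$, corrected by Lefschetz classes pulled back from $\X$ and from $\mathbb{P}(\L)$. Granted such a $\Gamma$, the transform $[\Gamma]_{\ast}$ sends Hodge classes to Hodge classes and algebraic classes to algebraic classes, yielding the forward implication of the conjecture; the reverse implication then follows by applying the transpose of $\Gamma$ combined with biduality, so that the roles of $\X$ and $\Y$ may be interchanged.

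The main difficulty, and the reason the statement is only conjectural, is precisely the geometric construction and analysis of $\Gamma$ by classical methods. When HPD exists, the Fourier--Mukai kernel automatically delivers such a cycle via its Chern character and Theorem \ref{HPD}; in general one must control the singularities and multiplicity of the conormal variety inside $\X_{\L}\times\Y_{\L}$ and verify that the resulting transform is actually an isomorphism of primitive Hodge structures. A promising path is to use the theory of vanishing cycles for the tautological family of hyperplane sections of $\X$ parameterized by $\mathbb{P}(\V^{\ast})$: its discriminant is precisely $\Y$, the primitive cohomology of $\X_{\L}$ is encoded in the nearby cycles along $\Y_{\L}=\Y\cap\mathbb{P}(\L)$, and by the symmetric role of $\X$ and $\Y$ under projective duality the same Hodge structure should arise as the primitive cohomology of $\Y_{\L}$ from the dual family over $\mathbb{P}(\L^{\perp})$. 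Upgrading this identification from an isomorphism of abstract Hodge structures to one induced by a single algebraic cycle is the decisive step.
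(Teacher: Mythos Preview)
The statement you are addressing is labeled \textbf{Conjecture}~\ref{Conjprojectivedual} in the paper and is explicitly left open: the paper offers no proof of it. The author states it as an expectation ``motivated from the noncommutative techniques'' of Theorem~\ref{HPD}, and hopes that ``we can establish duality of the Hodge conjecture for certain linear section of the projective dual varieties by classical methods of algebraic geometry.'' There is therefore nothing in the paper to compare your argument against.

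Your proposal is not a proof either, and you say as much: you identify the decisive missing step as the construction of an algebraic correspondence $\Gamma\in\CH^{\ast}(\X_{\L}\times\Y_{\L})_{\Q}$ inducing an isomorphism of primitive middle Hodge structures, and you note that this is exactly why the statement is conjectural. That assessment is accurate. Your Lefschetz reduction on the $\X_{\L}$ side is fine and is already used in the paper (Example~\ref{egHPD}); the analogous reduction on the $\Y_{\L}$ side is genuinely problematic since $\Y$ is typically singular, and your workaround via biduality and transporting classes through $\Gamma$ presupposes the very correspondence whose existence is the open question. The vanishing-cycle heuristic you sketch is a reasonable line of attack, but as written it remains a plan rather than an argument, which is consistent with the paper's own stance that this is a conjecture rather than a theorem.
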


\subsection{Connective dg algebras}
In this subsection, we prove that the noncommutative Hodge conjecture is true for the connective $\3\dg$ algebras.

\begin{defn}
 $\a$ is called a connective $\3\dg$ algebra if $\H^{\i}(\a)=0$ for
 $\i> 0 $.
\end{defn}

\begin{thm}\label{propersmoothconectivealgebraHodge}
 If $\a$ is a smooth and proper connective $\3\dg$ algebra, the noncommutative Hodge conjecture is true for $\a$.
\end{thm}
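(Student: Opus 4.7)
The plan is to combine the structure theorem for smooth proper connective dg algebras cited in the introduction with the additivity of the noncommutative Hodge conjecture under direct sum decompositions of noncommutative Chow motives (Theorem \ref{SODNMotive}). Concretely, by \cite[Theorem 4.6]{raedschelders2020proper}, for a smooth proper connective dg algebra $\a$ one has an isomorphism
\[
\U(\a)_{\Q}\;\cong\;\U\bigl(\H^{0}(\a)/\mathsf{Jac}(\H^{0}(\a))\bigr)_{\Q}\;\cong\;\bigoplus_{\i=1}^{\mathsf{N}}\U(\C)_{\Q}
\]
in $\NChow_{\Q}$, where the second isomorphism comes from the fact that $\H^{0}(\a)/\mathsf{Jac}$ is a finite-dimensional semisimple $\C$-algebra, hence Morita equivalent to a finite product of copies of $\C$. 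Thus the first step is to invoke this structural result to reduce to the case $\a=\C$.

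Next I would apply Theorem \ref{SODNMotive} inductively to the direct sum decomposition above. Since the periodic/negative cyclic homology, algebraic and topological $\K$-theory, and Hochschild homology are all additive invariants with values in idempotent-complete $\Q$-linear additive categories, the same argument used in the proof of Theorem \ref{SODNMotive} shows that the noncommutative Hodge conjecture for $\a$ is equivalent to the noncommutative Hodge conjecture for each summand $\U(\C)_{\Q}$. So the whole statement reduces to verifying the noncommutative Hodge conjecture for the one-object dg category $\underline{\C}$.

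The base case is then a direct calculation. For $\A=\C$ one has $\K_{0}(\C)=\Z=\K_{0}^{\top}(\C)$, while $\HH_{0}(\C)=\HN_{0}(\C)=\HC^{\per}_{0}(\C)=\C$, with the maps $\j$ and $\pi$ both identities. The topological Chern character sends $\K_{0}^{\top}(\C)_{\Q}=\Q$ isomorphically onto $\Q\subset\C$, so
\[
\Hodge(\C)\;=\;\pi\bigl(\j^{-1}(\Ch^{\top}(\K_{0}^{\top}(\C)_{\Q}))\bigr)\;=\;\Q\;\subset\;\C\;=\;\HH_{0}(\C),
\]
and the algebraic Chern character $\K_{0}(\C)_{\Q}=\Q\to\HH_{0}(\C)=\C$ hits exactly this $\Q$. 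Hence the noncommutative Hodge conjecture holds for $\C$, and by the reduction above, for $\a$.

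The only genuinely delicate point is justifying that the additivity argument of Theorem \ref{SODNMotive}, which was stated for a decomposition $\U(\c)_{\Q}\cong\U(\A)_{\Q}\oplus\U(\B)_{\Q}$ of smooth proper dg categories, still applies when we take $\A=\a$ on one side and $\B=\bigoplus\underline{\C}$ on the other via a functorial isomorphism in $\NChow_{\Q}$ rather than through a semi-orthogonal decomposition at the dg level. This is where I would be most careful: one needs the additive invariants $\K_{0}(-)_{\Q}$, $\HN_{0}$, $\HC^{\per}_{0}$ and $\K^{\top}_{0}(-)_{\Q}$ to respect the abstract direct sum in $\NChow_{\Q}$, and the defining maps $\Ch$, $\Ch^{\top}$, $\j$, $\pi$ to be natural with respect to morphisms in $\NChow_{\Q}$. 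Given that each of these is a $\Q$-linear additive invariant factoring through $\Hmo_{0,\Q}$, and that $\Hodge(-)$ is defined as the image/preimage of such natural maps, the splitting of $\U(\a)_{\Q}$ transports the noncommutative Hodge conjecture summand-by-summand, which completes the argument.
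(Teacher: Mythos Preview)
Your proposal is correct and follows exactly the same route as the paper: invoke \cite[Theorem 4.6]{raedschelders2020proper} to get $\U(\a)_{\Q}\cong\bigoplus\U(\C)_{\Q}$, then use the additivity of the noncommutative Hodge conjecture under direct sum decompositions of noncommutative motives (Theorem \ref{SODNMotive}) to reduce to the trivial case $\A=\underline{\C}$. The paper's proof is a two-line compression of this; your write-up simply makes explicit the base case and the naturality needed for Theorem \ref{SODNMotive}, both of which the paper leaves implicit.
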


\begin{proof}
 According to recent work of Theo Raedschelders and Greg Stevenson \cite[Corollary 4.3, Theorem 4.6]{raedschelders2020proper}, $\U(\a)_{\Q}\cong\U(\H^{0}(\a)/\mathsf{Jac}(\H^{0}(\a)))_{\Q}\cong \oplus \U(\C)_{\Q}$. Hence, the noncommutative Hodge conjecture is true for connective $\3\dg$ algebras. In particular, it is true for the proper smooth algebras (concentrated in degree 0).
\end{proof}

\par

 We provide another proof which involves more calculation for smooth and proper algebras.
Clearly, proper algebras are finite dimensional algebras. Due to R.\ Rouquier \cite[section 7]{rouquier_2008}, $\mathsf{Pdim}_{\a^{\mathsf{e}}}(\a)= \mathsf{Pdim}(\a)$, smooth algebras are finite global dimensional algebras.
Consider the acyclic quiver $\mathsf{Q}$ with finitely many vertices. Let $\a:= \mathsf{kQ/I}$ be the quiver algebra with relations, where $\mathsf{kQ}$ is the path algebra of $\mathsf{Q}$. Then, $\a$ is a smooth and proper algebra. The noncommutative Hodge conjecture is true for $\a$.

\begin{thm}\label{algebra}
Let $\a= \mathsf{kQ/I}$. Consider natural Chern character map
 $$\Ch\colon \K_{0}(\a)\longrightarrow \HH_{0}(\a).$$
 Then, $\Im\Ch_{\Q}\otimes \C= \HH_{0}(\a)$. In particular, the noncommutative Hodge conjecture is true for $\a$.
\end{thm}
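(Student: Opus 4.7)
The plan is to compute both sides explicitly and observe that the Chern character matches a natural basis to a natural basis.

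First I would identify $\K_0(\a)$. Since $\a = \mathsf{kQ/I}$ is a finite-dimensional algebra of finite global dimension (smoothness gives $\mathsf{pdim}_{\a^{\mathsf{e}}}(\a) < \infty$, hence finite global dimension by Rouquier), $\K_0(\a) = \K_0(\Perf(\a))$ is freely generated by the classes of the indecomposable projectives $\mathsf{P}_{\i} = \a \mathsf{e}_{\i}$, where $\{\mathsf{e}_{\i}\}_{\i \in \mathsf{Q}_0}$ are the primitive idempotents at the vertices. In particular $\K_0(\a) \cong \Z^{|\mathsf{Q}_0|}$.

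Next I would compute $\HH_0(\a) = \a/[\a,\a]$. The key observation is that any non-closed path $\mathsf{p}$ from $\i$ to $\j$ (with $\i \neq \j$) lies in the commutator subspace: writing $\mathsf{p} = \mathsf{e}_{\j} \mathsf{p} \mathsf{e}_{\i}$, one checks $[\mathsf{e}_{\j},\mathsf{p}] = \mathsf{e}_{\j}\mathsf{p} - \mathsf{p}\mathsf{e}_{\j} = \mathsf{p} - 0 = \mathsf{p}$, since $\mathsf{p}\mathsf{e}_{\j} = 0$ as $\i \neq \j$. Because $\mathsf{Q}$ is acyclic, the only paths from $\i$ to itself are the idempotents $\mathsf{e}_{\i}$, and relations in $\mathsf{I}$ only kill elements without introducing new closed paths. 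Hence $\HH_0(\a)$ has basis $\{[\mathsf{e}_{\i}]\}_{\i \in \mathsf{Q}_0}$.

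The final step is to match these bases under the Chern character. For a finitely generated projective of the form $\a\mathsf{e}$ with $\mathsf{e}$ idempotent, the Chern character sends $[\a\mathsf{e}]$ to the class $[\mathsf{e}] \in \a/[\a,\a] = \HH_0(\a)$; this is the standard trace description of $\Ch$ on rank-one projections, and can be extracted from the functoriality diagram defining $\Ch$ in the preliminary section, using that $\mathsf{e}$ represents the composition $\underline{\C} \xrightarrow{\a\mathsf{e}} \Per_{\3\dg}(\a)$. Thus $\Ch([\mathsf{P}_{\i}]) = [\mathsf{e}_{\i}]$, so $\Ch$ is already an isomorphism over $\Q$ and, a fortiori, $\Im(\Ch_{\Q}) \otimes \C = \HH_0(\a)$. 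Since the Hodge classes $\Hodge(\a)$ are by definition a subspace of $\HH_0(\a)$ containing $\Im(\Ch_{\Q})$, surjectivity onto $\Hodge(\a)$ is automatic, proving the noncommutative Hodge conjecture for $\a$.

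The only subtle point, and the part I would be most careful about, is the commutator computation showing that every non-closed path and every relation-class is killed in $\HH_0$, so that the $[\mathsf{e}_{\i}]$ really do form a basis rather than a mere spanning set; acyclicity of $\mathsf{Q}$ is essential here, and without it one would pick up extra generators from closed paths surviving modulo $\mathsf{I}$, which would break the counting argument used to conclude bijectivity of $\Ch_{\Q}$.
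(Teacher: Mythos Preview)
Your proof is correct and follows essentially the same route as the paper: identify $\HH_0(\a)$ with the span of the vertex idempotents $[\mathsf{e}_{\i}]$, and show that $\Ch([\a\mathsf{e}_{\i}]) = [\mathsf{e}_{\i}]$. The only difference in emphasis is that the paper justifies the Chern character identity $\Ch([\a\mathsf{e}_{\i}]) = [\mathsf{e}_{\i}]$ via an explicit bar-complex computation using McCarthy's model for Hochschild homology of an exact category, whereas you invoke the trace description directly; conversely, you spell out the commutator argument for $\HH_0(\a)$ that the paper simply asserts. Note also that for the theorem as stated you only need the $[\mathsf{e}_{\i}]$ to \emph{span} $\HH_0(\a)$, so your final worry about linear independence is not actually needed for the conclusion.
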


\begin{proof}
 Firstly, for the algebra $\a$, $\HH_{0}(\a)\cong \a/[\a,\a]\cong \k\langle \mathsf{e}_{1},\mathsf{e}_{2},\cdots, \mathsf{e}_{\n}\rangle$ where $\mathsf{e}_{\i}$ is vertex of the quiver $\mathsf{Q}$. We write $\mathsf{S}_{\i}=\a\cdot \mathsf{e}_{\i}$ which is considered as a left $\a$ module, $[\mathsf{S}_{\i}]\in \K_{0}(\a)$. We prove that $\Ch([\mathsf{S}_{\i}])= \mathsf{e}_{\i}$.
 According to the paper of McCarthy, ``Cyclic homology of an exact category'' \cite[section 2]{MCCARTHY1994251}, there is an natural identification of Hochschild homology$\colon$
$$\bigoplus_{\n} \Hom_{\a}(\a,\a)\otimes \cdots \otimes\Hom_{\a}(\a,\a)\longrightarrow \bigoplus_{\X,\Y,\n} \Hom_{\a}(\X,\E_{1})\otimes\cdots \otimes \Hom_{\a}(\E_{\n},\Y).$$
It is a natural quasi-isomorphism, the left hand side is exactly the bar complexes of $\a$. $\X$ and $\Y$ are both projective left $\a$ modules. Under this identification, the image of the Chern character of object $[\P]$ that is projective $\a$ module is the homology class of $\mathsf{id}_{\P}$ in the right hand side complex. Consider the local picture$\colon$
$$\mathsf{Bar}\colon \Hom_{\a}(\mathsf{S}_{\i},\a)\otimes \Hom_{\a}(\a,
\mathsf{S}_{\i})\longrightarrow \Hom_{\a}(\mathsf{S}_{\i},\mathsf{S}_{\i})\oplus \Hom_{\a}(\a,\a).$$
Let $\f \in \Hom(\mathsf{S}_{\i},\a)$ be the natural inclusion, $\mathsf{e}_{\i}\in \Hom_{\a}(\a,\mathsf{S}_{\i})$ be the multiplication by $\mathsf{e}_{\i}$. Then $\mathsf{Bar}(\f\otimes \mathsf{e}_{\i})=\mathsf{id}_{\mathsf{S}_{\i}}-\mathsf{e}_{\i}$. Therefore, $[\mathsf{e}_{\i}]=[\mathsf{id}_{\mathsf{S}_{\i}}]$ in $\HH_{0}(\Proj\ \a)$. Hence $\Ch([\mathsf{S}_{\i}])= [\mathsf{e}_{\i}]$. Finally, $\Im\Ch_{\Q}\otimes \C= \HH_{0}(\a)$. Since $\Im\Ch_{\Q} \subset \HH_{0,\Q}(\a)$, therefore $\Im\Ch_{\Q} = \HH_{0,\Q}(\a)$.
\end{proof}

   A finite dimensional algebra $\a$ is (derived) Morita equivalent to an elementary algebra which is isomorphic to $\mathsf{kQ/I}$ for some quiver $\mathsf{Q}$. Clearly $\mathsf{kQ/I}$ is smooth and proper if $\a$ is smooth and proper. Then according to Theorem \ref{algebra}, the Hodge conjecture is true for any smooth and finite dimensional algebra $\a$.

\begin{rem}
 A.\ Perry pointed out to the author that if $\a$ is a smooth and proper algebra, $\Perf(\a)$ can be an admissible
 subcategory of the $\Perf(\X)$ which admits full exceptional collections for some smooth and projective varieties $\X$ by Orlov \cite[section 5.1]{Orlov2016SMOOTHAP}.
 Therefore, the noncommutative Hodge conjecture of $\a$ is true.
\end{rem}
\par
Classically, given any projective smooth variety $\X$, there is a compact generator $\E$ of $\mathsf{D}_{\mathsf{Qch}}(\X)$. Write $\E$ again after the resolution to an injective complex. Denote $\a=\Hom_{\3\dg}(\E,\E)$, then there is an equivalence $\mathsf{D}^{\per}(\a)\cong \Perf(\X)$ and chain of  derived Morita equivalences between $\Per_{\3\dg}(\a)$ and $\Per_{\3\dg}(\X)$. Thus, commutative Hodge for $\X$ $\Leftrightarrow$ Noncommutative Hodge for $\3\dg$ algebra $\a$. By the results above, suppose $\a$ is a smooth and finite dimensional algebra, then the Hodge conjecture of $\a$ is true.

\begin{defn}
Let $\X$ be a projective smooth variety. An object $\mathsf{T}$ is a called tilting sheaf if the following property holds$\colon$ \\
(1) $\mathsf{T}$ classical generates $\b\X$.\\
(2) $\a:=\mathsf{Hom}(\mathsf{T},\mathsf{T}) $ is of finite global dimension. \\
(3) $\mathsf{Ext}^{k}(\mathsf{T},\mathsf{T})=0$ for $k> 0$.

\par

The reader can refer to Alastair Craw's note, ``Explicit methods for derived categories of sheaves'' \cite{craw} for more discussions.
\end{defn}

Due to Van den Bergh, there are many examples of varieties which admit a tilting bundle.

\begin{eg}(Van den Bergh \cite[theorem A]{2002math......7170V})
 Suppose there is a projective morphism $\f:\X\longrightarrow \Y=\mathsf{Spec}\ \R$ between noetherian schemes. Furthermore, $\mathsf{Rf}_{\ast}(\mathcal{O}_{\X})\cong \mathcal{O}_{\Y}$ and the fibers are at most one dimensional. Then there is a tilting bundle $\mathcal{E}$ of $\X$.
\end{eg}

\begin{cor}\label{Tiltingsheaf}
Suppose $\X$ admits a tilting sheaf, then Hodge conjecture for $\X$ is true.
\end{cor}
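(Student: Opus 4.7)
The plan is to reduce the statement to Theorem \ref{algebra1} by viewing the tilting sheaf as a compact generator and identifying the resulting derived endomorphism algebra with the ordinary algebra $\a = \mathsf{Hom}(\mathsf{T}, \mathsf{T})$, after which Theorem \ref{MoritaHodge} and Theorem \ref{NHodge} finish the job.

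First I would verify that $\a$ is smooth and proper as a $\C$-algebra. Properness is immediate: since $\X$ is projective and $\mathsf{T}$ is coherent, $\mathsf{Hom}(\mathsf{T},\mathsf{T})$ is finite-dimensional over $\C$, so $\a$ is proper. Smoothness follows from condition (2) in the definition of a tilting sheaf, because a finite-dimensional algebra of finite global dimension is perfect as an $\a$-$\a$ bimodule, which is exactly the definition of smooth in the $\3\dg$ sense.

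Next I would upgrade the classical tilting equivalence to a derived Morita equivalence of $\3\dg$ enhancements. The vanishing $\mathsf{Ext}^{k}(\mathsf{T},\mathsf{T})=0$ for $k>0$ from condition (3) forces the dg endomorphism algebra $\Hom_{\3\dg}(\mathsf{T},\mathsf{T})$, computed after resolving $\mathsf{T}$ to an injective complex inside the natural $\3\dg$ enhancement of $\b\X$, to have cohomology concentrated in degree $0$; it is therefore quasi-isomorphic to $\a$ regarded as a dg algebra in degree $0$. Combined with the classical generation from condition (1), this is precisely the situation described in the paragraph immediately preceding the corollary, producing a chain of derived Morita equivalences between $\Per_{\3\dg}(\a)$ and $\Per_{\3\dg}(\X)$.

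Finally I would invoke Theorem \ref{algebra1} to obtain the noncommutative Hodge conjecture for the smooth proper algebra $\a$, transport this to $\Per_{\3\dg}(\X)$ via Theorem \ref{MoritaHodge}, and conclude the classical Hodge conjecture for $\X$ via Theorem \ref{NHodge}. There is no substantial obstacle once the earlier theorems are in hand; the only careful step is using condition (3) to identify the derived endomorphism dg algebra of $\mathsf{T}$ with the ordinary algebra $\a$ concentrated in degree $0$, so that Theorem \ref{algebra1} applies on the nose rather than to a genuinely higher dg algebra.
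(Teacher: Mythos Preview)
Your proposal is correct and follows essentially the same approach as the paper: resolve $\mathsf{T}$ to an injective complex, observe that the resulting $\3\dg$ endomorphism algebra is quasi-isomorphic to the smooth proper algebra $\a$ concentrated in degree $0$, and then invoke the derived Morita equivalence with $\Per_{\3\dg}(\X)$ together with the earlier results for smooth proper algebras. The paper's proof is just a terser version of exactly this argument, leaving implicit the references to Theorem~\ref{MoritaHodge} and Theorem~\ref{NHodge} that you spell out.
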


\begin{proof}
   Let $\mathsf{T}$ be a tilting sheaf of $\X$. We write $\mathsf{T}$ again after resolution to an injective complex. Define $\a:= \Hom_{\3\dg}(\mathsf{T},\mathsf{T})$, which is quasi-isomorphic (hence derived Morita equivalent) to a smooth and finite dimensional algebra. Thus, the Hodge conjecture for $\X$ is true.
\end{proof}

\end{document}